\documentclass[10pt]{article}
\usepackage{amsfonts}
\usepackage{amsthm}
\usepackage{amssymb}
\usepackage{graphicx, enumerate}
\usepackage{amsmath}
\usepackage{latexsym}
\usepackage{longtable}
\usepackage{tabularx}
\usepackage{amsmath}
\usepackage{amsfonts}
\usepackage{amsthm}
\usepackage{setspace}
\usepackage{graphicx}
\usepackage{float}
\usepackage{rotating}
\usepackage{tikz}
\usepackage{verbatim}
\usepackage{subfig}

\newtheorem{theorem}{Theorem}[section]
\newtheorem{lemma}[theorem]{Lemma}
\newtheorem{prop}[theorem]{Proposition}

\newtheorem{obs}[theorem]{Observation}

\newtheorem*{rem*}{Remark}

\theoremstyle{definition}
\newtheorem{definition}[theorem]{Definition}


\newcommand{\dist}{{\rm{dist}}}

\footskip=30pt
\vspace{5cm}

\begin{document}

\textwidth4.5true in
\textheight7.2true in

\title
{\Large \sc \bf {$2$-nearly Platonic graphs are unique v07e}}
\date{}
\author{{{D. Froncek, M.R. Khorsandi, S.R. Musawi, J. Qiu}}}
\maketitle

\begin{abstract}
A $2$-nearly Platonic graph of type $(k|d)$ is a $k$-regular planar graph with $f$ faces, $f-2$ of which are of degree $d$ and the remaining two are of degrees $m_1,m_2$, both different from $d$. Such a graph is called balanced if $m_1=m_2$. We show that  all $2$-nearly Platonic graphs are necessarily balanced. This proves a recent conjecture  by Keith, Froncek, and Kreher.
\end{abstract}

\noindent
\textbf{Keywords:}  Planar graphs, regular graphs, Platonic graphs

\noindent
\textbf{Mathematics Subject Classification:} 05C10

\section{Introduction}\label{sec:intro}

Throughout this paper, all graphs we consider are finite, simple, connected, planar, undirected and non-trivial graph. 

A graph is said to be planar, or embeddable in the plane, if it can be drawn in the plane such that each common point  of two edges is a vertex. This drawing of a planar graph $G$ is called a planar embedding of $G$ and can itself be regarded as a graph isomorphic to $G$. Sometimes, we call a planar embedding of a graph a \emph{plane graph}. By this definition, it is clear that we need some matters of the topology of the plane. Immediately, after deleting the points of a plane graph from the plane, we have some maximal open sets (or regions) of points in the plane called  \emph{faces} of the plane graph. There exist exactly one unbounded region that we call it an \emph{outerface} of the plane graph and other faces are called as \emph{internal faces}. 
The boundary of a face is the set of points of vertices and  edges touching the face. In the graph-theoretic language, the boundary of a face is a closed walk. A face is said to be incident with the vertices and edges in its boundary, and two faces are adjacent if their boundaries have an edge in common. 

A graph $G$ is  \emph{$k$-regular} when the degrees of all vertices are equal to $k$. A regular graph is one that is $k$-regular for some $k$. Let $G=(V,E,F)$ be a graph with the vertex set $V$, edge set $E$, and face set $F$. The well-known \textit{Euler's formula} states that if $G$ is a connected planar graph, then 
$$
|V| - |E| + |F| = 2.
$$

The \emph{length} of a face in a plane graph $G$ is the total length of the closed walk bounding the face in $G$. A cut-edge belongs to the boundary of only one face, and it contributes twice to its length.

A graph $G$ is  \emph{$k$-connected} if $|V(G)|>k$ and $G-X$ is connected for every set $X\subseteq V(G)$ with $|X|<k$.
If $|V(G)|>1$ and $G-A$ is connected for every set $A\subseteq E(G)$  of fewer than $\ell$ edges, then $G$ is called \emph{$\ell$-edge-connected}.


Platonic solids are a well-known family of five three-dimensional polyhedra. There is no reliable information about their first mention, and different opinions have been taken \cite{Atiyah-Sutcliffe,Lloyd}. However, they are attractive for mathematicians and others, in terms of some symmetries that they have. In the last two centuries, many of authors have paid attention to the polyhedra and they have extended it to convex and concave polytopes, see, e.g.,  \cite{Grunbaum}.

But what matters from the combinatorial point of view is that a convex polyhedron can be embedded on a sphere, and then we can map it on a plane so that the images of lines on the sphere do not cut each other in the plane. In this way, we have corresponded a polyhedron on the sphere with a planar graph in the plane. 
Steinitz's theorem (see, e.g., \cite{Grunbaum}) states that a graph $G$ with at least four vertices is the network of vertices and edges of a convex polyhedron if and only if $G$ is planar and $3$-connected.  

In 1967, Gr\"unbaum considered 3-regular and connected planar graphs and he got some results. For example, for a $3$-regular connected planar graph and $k\in\{2,3,4,5\}$,  it is proved that if the length of all faces but $t$ faces is divisible by $k$ then $t\ge2$ and if $t=2$ then two exceptional faces have not a common vertex \cite{Grunbaum}. In 1968, in his Ph.D thesis, Malkevitch proved the same results for 4 and 5-regular 3-connected planar graphs \cite{Malkevitch-2}.   
Several papers are devoted to the study of this topic, but all of them have considered the planar graphs such that the lengths of all faces but some exceptional faces are a multiple of $k$ and $k\in \{2,3,4,5\}$ (see \cite{Crowe,Hornak-Jendrol,HorJuc,Jendrol,Jendrol-Jucovic}).

A $k$-regular planar graph with $f$ faces is a \emph{$t$-nearly Platonic graph} of type $(k|d)$ if $f>2t, f-t$ of its faces are of size $d$ and the remaining $t$ faces are of sizes other than $d$. The faces of size $d$ are often called \emph{common faces}, and the remaining ones \emph{exceptional} or \emph{disparate} faces. When $t\geq2$ and all disparate faces are of the same size, then the graph is called a \emph{balanced $t$-nearly Platonic graph}.

Keith, Froncek, and Kreher~\cite{KFK1,KFK2} and Froncek, Khorsandi, Musawi, and Qiu~\cite{FKQ-1nP} proved recently that there are no $1$-nearly Platonic graphs.

Deza, Dutour Sikiri\v{c}, and Shtogrin~\cite{DSS}  classified for each admissible pair $(k|d)$  all possible sizes of the exceptional faces of balanced $3$-nearly Platonic graphs and sketched a proof of the completeness of the list. Froncek and Qiu~\cite{FKQ-3nP} provided a detailed combinatorial proof of existence of infinite families of such graphs for each of the listed exceptional sizes.

There are 14 well-known families of balanced $2$-nearly Platonic graphs (see, e.g.,~\cite{DSS} or~\cite{KFK1}). Deza, Dutour Sikiri\v{c}, and Shtogrin~\cite{DSS} provide a list and offer a sketch of a proof of the completeness of the list.  Keith, Froncek, and Kreher conjectured~\cite{KFK1} that every $2$-nearly Platonic graph must be balanced. 

We show that the only admissible types of $2$-nearly Platonic graphs are $(3|3),(3|4),(3|5),(4|3)$, and $(5|3)$, and that all $2$-nearly Platonic graphs are balanced. We also prove in detail that the list of 14 families presented by Deza, Dutour Sikiri\v{c}, and Shtogrin~\cite{DSS} is complete.

\section{Terminology and notation}\label{sec:notation}

We say that the two exceptional faces are \emph{touching each other} or simply \emph{touching}, if they share at least one vertex. Similarly, an exceptional face will be called \emph{self-touching} if a vertex appears on the boundary of the face more than once.

\begin{definition}
	Let $G$ be a graph and $S\subseteq V(G)$. Then $\langle S\rangle$,  the induced subgraph by $S$, denotes the graph on $S$ whose edges are precisely the edges of $G$ with both ends in $S$. Also, $G-S$ is obtained from $G$ by deleting all the vertices in $S$ and their incident edges. If $S=\{x\}$ is a singleton, then we write $G-x$ rather than $G-\{x\}$.
\end{definition}

\begin{definition}\label{def:block}
	A $(k;k_1,k_2|d)$-block $B$ of order $n$ is a $2$-connected planar graph with $n-2$ vertices of degree $k$, two vertices $x$ and $y$ with $\deg(x)=k_1,\deg(y)=k_2$ where $2\leq k_2\leq k_1 <k$, all faces but one of degree $d$, and the remaining  face of degree $h\neq d$, where vertices $x,y$ belong to the face of degree $h$. 
\end{definition}

\begin{definition}\label{def:endblock}
	A $(k;k_1|d)$-endblock of order $n$ is a $2$-connected planar graph with $n-1$ vertices of degree $k$, one vertex $x$ with $\deg(x)=k_1$ where $1< k_1<k$, all faces but one of degree $d$, and the remaining  face of degree $h\neq d$, where the vertex $x$ belongs to the face of degree $h$.
\end{definition}

When we speak about blocks, we always assume that the exceptional face is the outer face. Let the boundary path of the exceptional face of $(k;k_1,k_2|d)$-block $B$  be of length $h=a+b$. We denote it $x=x_0,x_1,\dots,x_{a}=y,x_{a+1},\dots,$ $x_{a+b-1}$ and always assume that $a\leq b$. When we need to specify $a$ and $b$ in our arguments, we denote such a block as $(k;k_1,k_2|d ,\langle a,b\rangle)$-block.

Similarly, if we need to specify the length $h$ of the exceptional face in an endblock, we will denote it as $(k;k_1|d,\langle h\rangle)$-endblock.

We observe that when the exceptional faces touch in exactly one vertex, say $z$, then by splitting $z$ into two vertices $x$ and $y$ we obtain a $(k;k_1,k_2|d,\langle a,b\rangle)$-block, where $\deg(x)\geq2,\deg(y)\geq2$ and $a$ and $b$ are the sizes of the two original exceptional faces, $k_2=2$ and $2\leq k_1\leq 3$. Consequently, such a graph would have to be of type $(4|3)$ or $(5|3)$.

\section{Related results}\label{sec:known}

We will use the following results in our proofs.

\begin{theorem}\label{thm:2-conn-edges-on-bound} 
	In a $2$-connected plane graph, all facial boundaries are cycles and each edge lies on the boundary of two distinct faces. 
\end{theorem}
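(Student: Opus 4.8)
The plan is to argue by induction along an open ear decomposition. Recall Whitney's theorem: every $2$-connected graph $G$ can be written as $G=C_0\cup P_1\cup\cdots\cup P_r$, where $C_0$ is a cycle and, for each $i$, the ear $P_i$ is a path with two \emph{distinct} endpoints in $G_{i-1}:=C_0\cup P_1\cup\cdots\cup P_{i-1}$ and with no internal vertex in $G_{i-1}$. Fix the given planar embedding of $G$; it restricts to a planar embedding of each $G_i$. I would prove, by induction on $i$, the assertion $A(i)$: \emph{in the plane graph $G_i$, every facial boundary is a cycle and every edge lies on the boundary of exactly two distinct faces}. The theorem is the case $i=r$.

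For the base case, $G_0=C_0$ is a cycle, which by the Jordan curve theorem splits the plane into exactly two faces, both having $C_0$ as their boundary; these faces are distinct and each edge of $C_0$ lies on both. For the inductive step, assume $A(i-1)$ and insert the ear $P:=P_i$, with endpoints $u\neq v$. Since $P$ is connected and its interior avoids $G_{i-1}$, its interior lies in a single face $F$ of $G_{i-1}$, and $u,v$ lie on the boundary of $F$; by $A(i-1)$ that boundary is a cycle $C_F$, and $F$ is an open disk with frontier $C_F$. The vertices $u,v$ split $C_F$ into two paths $Q_1,Q_2$ meeting exactly in $\{u,v\}$, and drawing the arc $P$ across the disk $F$ splits $F$ into exactly two faces, bounded by $Q_1\cup P$ and $Q_2\cup P$; these are cycles because $u\neq v$ and $P$ is internally disjoint from $C_F$ (simplicity of $G$ rules out a degenerate $2$-cycle). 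Every face of $G_{i-1}$ other than $F$ survives unchanged in $G_i$, so all facial boundaries of $G_i$ are cycles. As for edges: each edge of $P$ lies on the two new, distinct faces; each edge of $C_F$ now lies on one of the two new faces and still on the unchanged face on its other side; all other edges are untouched. This establishes $A(i)$.

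The step with real content is the geometric claim in the induction — that the ear's interior lies in one face $F$, that $F$ is a disk bounded by $C_F$, and that inserting the arc $P$ divides $F$ into exactly two disk faces; this is where the topology of the plane is genuinely used (faces are the components of the complement of the drawing, this is inherited by subgraphs, and a Jordan-curve argument handles the splitting), and I expect it to be the main obstacle to a fully detailed write-up. In a paper like this it would be reasonable simply to cite the statement, as it is standard (see, e.g., Mohar and Thomassen, \emph{Graphs on Surfaces}). Two remarks confirm that the hypotheses are sharp: $2$-edge-connectedness would not suffice, since a \emph{closed} ear (a cycle attached at one vertex) would create a face whose boundary walk repeats that vertex; and the ``two distinct faces'' half can alternatively be derived from the classical fact that, in a connected plane graph, an edge lies on the boundary of exactly one face if and only if it is a bridge, applied to $G$, which is bridgeless and has at least three vertices because it is $2$-connected.
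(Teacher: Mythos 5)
Your proof is correct, but note that the paper does not actually prove this statement: Theorem~\ref{thm:2-conn-edges-on-bound} sits in the ``Related results'' section and is invoked as a standard fact from topological graph theory (it is, e.g., Proposition~4.2.6 in Diestel or Proposition~2.2.7 in Mohar--Thomassen), so there is no in-paper argument to compare yours against. What you supply --- induction along an open ear decomposition, with the base case handled by the Jordan curve theorem and the inductive step by splitting the face containing the new ear --- is exactly the textbook route, and your accounting of which edges lie on which faces after the split is right; the simplicity remark correctly excludes a degenerate $2$-cycle, since a length-one arc $Q_j$ together with a length-one ear would force a parallel edge. One small correction: the unbounded face of $G_{i-1}$ is \emph{not} an open disk in the plane (its closure is the complement of an open disk), so the sentence ``$F$ is an open disk with frontier $C_F$'' is literally false for that face; either transfer the embedding to the sphere first, or argue the splitting of the unbounded face directly by applying the Jordan curve theorem to the cycle $Q_1\cup P$. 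You already flag this topological step as the one needing care, and your closing observation --- that the ``two distinct faces'' half also follows from the fact that an edge of a connected plane graph lies on only one face precisely when it is a bridge --- is a legitimate shortcut for that half of the statement, though it does not by itself give that the boundaries are cycles.
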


\begin{theorem}[\cite{KFK2}]\label{thm:no-2-conn-1NP}
	There are no $2$-connected $1$-nearly Platonic graphs.
\end{theorem}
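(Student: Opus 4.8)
The plan is to combine an Euler‑formula count, which pins the admissible types down to the five Platonic ones, with a structural ``rigidity of Platonic patches'' argument that eliminates each of them. First I would fix notation: let $G$ be a $2$-connected $1$-nearly Platonic graph of type $(k|d)$ whose unique exceptional face $F_0$ has size $h\neq d$, and write $f=|F|$. Since $G$ is $2$-connected, Theorem~\ref{thm:2-conn-edges-on-bound} gives that every facial boundary is a cycle and every edge lies on exactly two distinct faces, so $\sum_F\ell(F)=2|E|$; combined with $k|V|=2|E|$ and $|V|-|E|+|F|=2$, eliminating $|E|$ and $f$ yields
\[
 2(d+h)=|V|\bigl(4-(k-2)(d-2)\bigr).
\]
The left side is positive, so $(k-2)(d-2)<4$. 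Moreover $d\ge 3$ (in a simple graph a facial cycle has length at least $3$) and $3\le k\le 5$ (planarity gives $k\le 5$; $k=2$ would make $G$ a cycle, which has only two faces, of equal size; $k=1$ is not $2$-connected). Hence $(k,d)\in\{(3,3),(3,4),(3,5),(4,3),(5,3)\}$, and in each case the displayed identity ties $|V|$ rigidly to $h$.

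Next I would pass to a ``patch'': redraw $G$ with $F_0$ as the outer face, so its boundary $C$ is a cycle of length $h$, every bounded face is a $d$-gon, and every vertex, on $C$ or not, has degree $k$. If $G$ has a vertex off $C$, the heart of the proof is to show that a $k$-regular plane graph all of whose bounded faces are $d$-gons and whose outer face is a cycle must, for each of the five types, be a sub‑patch of the corresponding spherical Platonic tiling, and that the only way to close such a patch up keeps the outer face a $d$-gon, contradicting $h\neq d$. For $(3,3)$ this is immediate: any interior vertex $w$ is surrounded by three triangular faces, so $N(w)$ induces a $K_4$ whose three other vertices already have degree $3$; hence $G=K_4$ and $F_0$ has size $3=d$, a contradiction. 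For $(4,3)$ and $(5,3)$ one shows the link of an interior vertex is a $d$-cycle and propagates this outward layer by layer; for $(3,4)$ and $(3,5)$ one runs the analogous propagation with quadrilateral, respectively pentagonal, inner faces. If instead $G$ has no vertex off $C$, then $G$ is a $2$-connected outerplane graph in which each vertex carries exactly $k-2$ chords and all inner regions are $d$-gons, and a short direct analysis of that non‑crossing chord system rules this out for all five types. A way to keep the first case uniform is to \emph{double} $G$ across $C$, obtaining a $d$-gonal sphere map that is $k$-regular except at the $h$ equator vertices, where the degree is $2k-2$, and then bound the number of interior vertices before finishing by inspection.

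I expect the main obstacle to be precisely this structural step. The counting is routine and on its own produces no contradiction — each of the five types admits integer solutions $(|V|,h)$ — so all the content lies in proving that no finite $k$-regular $d$-gonal patch can have a cyclic outer boundary of size $\neq d$. The types $(3,5)$ and $(5,3)$, corresponding to the dodecahedron and icosahedron, have the largest fundamental patches and will need the most care; and if the doubling route is taken one must account for the parallel chords that arise when a chord of $C$ is mirrored, which complicates the bookkeeping without affecting the underlying Euler count.
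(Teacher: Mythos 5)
Your Euler-formula reduction is correct and cleanly done: the identity $2(d+h)=|V|\bigl(4-(k-2)(d-2)\bigr)$ does pin the admissible types to $(3|3),(3|4),(3|5),(4|3),(5|3)$, and you are right that this count alone yields no contradiction (e.g.\ for $(3|3)$ it merely forces $3\mid h$ and $|V|=2+2h/3$). Note that the paper does not prove this theorem at all --- it imports it from the corrigendum \cite{KFK2} --- so the only question is whether your argument stands on its own. It does not yet.

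The gap is exactly where you predict it: the ``rigidity of Platonic patches'' step. You fully execute it only for type $(3|3)$ with an interior vertex (the $K_4$ argument, which is fine). For $(3|4)$, $(3|5)$, $(4|3)$ and $(5|3)$ you assert that the link of an interior vertex propagates ``layer by layer'' until the patch closes into the Platonic solid, but you never prove that the propagation is forced, that successive layers stay simple and non-self-intersecting, or that the process terminates only at the full solid; for the icosahedral and dodecahedral cases this is precisely where the work lies, and it is also where the original argument of \cite{KFK1} contained the error that necessitated \cite{KFK2}. The all-vertices-on-$C$ subcase is likewise only waved at (``a short direct analysis''), and for $k=4,5$ a vertex of $C$ carries $k-2\ge 2$ chords, so the ear-type argument that works for $k=3$ does not transfer verbatim. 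Finally, the doubling device does not rescue you: after reflecting across $C$ the equator vertices have degree $2k-2$ and the combinatorial curvature at them is still positive for every admissible type (e.g.\ $1/3$ per equator vertex for $(3|3)$), so the resulting Gauss--Bonnet count is consistent with the existence of such a sphere map and produces no bound on the number of interior vertices by itself. Until the structural step is written out case by case --- or replaced by a genuine discharging argument that exploits $h\neq d$ --- the proposal is a correct plan rather than a proof.
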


The following result is stated in~\cite{DS}, and a detailed proof is given in~\cite{FKQ-1nP}.

\begin{theorem}[\cite{DS},\cite{FKQ-1nP}]\label{thm:no-endblocks}
	There are no endblocks of any admissible type.
\end{theorem}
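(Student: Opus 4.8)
The plan is to build, from a hypothetical endblock $B$, several copies of $B$ into a $1$-nearly Platonic graph, contradicting the known non-existence of such graphs (recalled in Section~\ref{sec:intro}, and valid without any connectivity hypothesis). So suppose $B$ is a $(k;k_1|d,\langle h\rangle)$-endblock with $x$ the vertex of degree $k_1$; by Theorem~\ref{thm:2-conn-edges-on-bound} its exceptional face is bounded by a cycle, so $h\ge 3$ (and likewise $d\ge 3$). First I would record the arithmetic: writing $n,m,F$ for the numbers of vertices, edges and faces of $B$, counting degrees and face lengths gives $2m=k(n-1)+k_1=d(F-1)+h$, and eliminating $m$ and $F$ with Euler's formula yields
$$\bigl(2(k+d)-kd\bigr)(n-1)=2h+(d-2)k_1 .$$
As the right-hand side is positive, $2(k+d)-kd>0$, that is, $\frac1k+\frac1d>\frac12$; hence $k\le 5$, and $d=3$ whenever $k\ge 4$. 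Combined with $2\le k_1\le k-1$ from the definition of an endblock, only three values of $c:=k-k_1$ remain: $c=1$ (any admissible $k$); $c=2$, which forces $k\in\{4,5\}$ and $d=3$; and $c=3$, which forces $k=5$ and $d=3$.

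Next I would dispose of the three cases by explicit constructions, always embedding the copies of $B$ with their exceptional faces unbounded so that those faces fuse with one another into a single face of the assembled graph. For $c=1$: place two disjoint copies $B_1,B_2$ in the plane (each with its exceptional face unbounded), so that their complement is a single region, and add the edge $x_1x_2$ inside that region; then $\deg(x_1)=\deg(x_2)=k$, the graph is simple, connected, planar and $k$-regular, and every face has length $d$ except one of length $2h+2\ne d$. For $c=2$ (so $d=3$): place three disjoint copies in the plane so that a triangle on $x_1,x_2,x_3$ can be drawn in their complement bounding an empty triangular region, with the three copies lying outside it, and add that triangle; then each $x_i$ has degree $k_1+2=k$, the bounded triangular face has length $3=d$, the unbounded face has length $3h+3\ne d$, and all other faces have length $d$. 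For $c=3$ (so $k=5$, $d=3$): place four disjoint copies in the plane, identify $x_1$ with $x_2$ and $x_3$ with $x_4$ (each identification creates a vertex of degree $4$ and fuses the two incident exceptional faces into one of length $2h$), then add an edge joining the two identified vertices inside the common complement; the graph becomes $5$-regular with every face of length $3$ except one of length $4h+2\ne 3$. In each case Euler's formula on the assembled graph confirms it has more than two faces, exactly one of which has length $\ne d$, so it is a $1$-nearly Platonic graph --- the desired contradiction. Since no endblock survives any of the three cases, the theorem follows.

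I expect the embedding step, not the counting, to be the delicate point: one must be sure that after the skeleton edges are added, the exceptional faces of the copies together with the unbounded region fuse into a single face of nonstandard length, while in the case $c=2$ the small bounded triangular region remains a distinct face of length $d$, so that altogether exactly one face has length $\ne d$. This relies on embedding every copy with its exceptional face unbounded and on placing the copies consistently on one side of the added edges, and it is the part I would argue most carefully. Everything else --- that each assembled graph is simple, connected, and has the stated number of faces --- is routine, since the only edges we add run between, or identify vertices of, distinct copies of $B$.
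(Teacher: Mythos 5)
The paper does not prove this theorem itself---it is imported from \cite{DS} and \cite{FKQ-1nP}---but your argument has a genuine flaw within the deductive structure of this paper and of those sources: it is circular. All three of your reductions terminate in an appeal to the non-existence of $1$-nearly Platonic graphs ``without any connectivity hypothesis.'' Yet every one of your assembled graphs has a cut vertex: in the $c=1$ case the added edge $x_1x_2$ is a bridge; in the $c=2$ case each $x_i$ separates $B_i-x_i$ from the rest of the graph; in the $c=3$ case each identified vertex does the same. So what you actually invoke is the non-existence of $1$-nearly Platonic graphs with connectivity one---and that is precisely Theorem~\ref{thm:no-1-conn-1NP}, which this paper (and \cite{FKQ-1nP}) obtains \emph{as a corollary of} Theorem~\ref{thm:no-endblocks}, the statement you are trying to prove. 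The only independently available input is Theorem~\ref{thm:no-2-conn-1NP}, the $2$-connected case, and none of your constructions produces a $2$-connected graph.

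This is not a repairable detail of the particular gluings but a dead end for the whole strategy: an endblock has exactly one vertex of degree less than $k$, so any graph assembled from copies of endblocks by adding edges at, or identifying, those deficient vertices attaches each copy to the remainder at a single vertex, which is therefore a cut vertex. Hence no such assembly can ever be $2$-connected, and the reduction can never land on the one result you are entitled to use. Your preliminary bookkeeping is fine---the identity $\bigl(2(k+d)-kd\bigr)(n-1)=2h+(d-2)k_1$ and the resulting restriction to the types $(3|3),(3|4),(3|5),(4|3),(5|3)$ with $c=k-k_1\in\{1,2,3\}$ are correct and are a sensible first step---but the theorem requires a direct argument on the endblock itself, as in \cite{FKQ-1nP}, rather than a reduction to the classification of $1$-nearly Platonic graphs.
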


The non-existence of 1-nearly Platonic graphs with connectivity 1 follows directly from the above Theorem.

\begin{theorem}[\cite{FKQ-1nP}]\label{thm:no-1-conn-1NP}
	There are no $1$-nearly Platonic graphs with connectivity $1$.
\end{theorem}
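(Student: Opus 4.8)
The plan is to argue by contradiction, exploiting a cut vertex through the block-cut-vertex tree and reducing to the non-existence of endblocks (Theorem~\ref{thm:no-endblocks}). First note that a $1$-nearly Platonic graph of type $(k\mid d)$ has $k\ge 3$, since a connected $1$- or $2$-regular graph has at most two faces whereas such a graph has $f>2$. So suppose $G$ has a cut vertex. Its block-cut-vertex tree has at least two leaves, each a block of $G$ containing exactly one cut vertex of $G$; call these the \emph{leaf blocks}. No leaf block is a single edge, since that would force a vertex of degree $1\neq k$; hence every leaf block is $2$-connected, and by Theorem~\ref{thm:2-conn-edges-on-bound} all its facial walks (in the embedding inherited from that of $G$) are cycles.

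Now fix a leaf block $B$ with cut vertex $v$. Every vertex of $B$ other than $v$ is incident in $G$ only with edges of $B$, hence has degree $k$; write $k_1=\deg_B(v)$, so $2\le k_1<k$. Exactly one face $F_0$ of $B$ contains, in the drawing of $G$, the connected subgraph $G-(V(B)\setminus\{v\})$, and $v$ lies on $F_0$. Every other face of $B$ is a face of $G$ with the same bounding cycle, so has size $d$ unless it is the unique exceptional face $F^{*}$ of $G$; and the face of $G$ containing $F_0$ carries on its boundary the cycle $\partial F_0$ together with a nonempty sub-walk through $v$ lying in the rest of $G$, so its size exceeds $|\partial F_0|$. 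Since the boundary of any face of $G$ lies in the edge set of at most one block, $F^{*}$ is a non-$F_0$ face of at most one leaf block; as there are at least two leaf blocks, I may choose $B$ so that every face of $B$ other than $F_0$ has size $d$. If $|\partial F_0|\neq d$, then $B$ has exactly one face whose size differs from $d$, namely $F_0$, and $v$ lies on it, so $B$ is a $(k;k_1\mid d)$-endblock, contradicting Theorem~\ref{thm:no-endblocks}.

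The hard part is the remaining case $|\partial F_0|=d$, in which \emph{all} faces of $B$ have size $d$: here one must rule out a $2$-connected plane graph that is $k$-regular except at a single vertex and all of whose faces have size $d$ (a ``degenerate endblock''). The plan is to peel $B$ away and analyse $H=G-(V(B)\setminus\{v\})$, a connected plane graph in which $v$ has degree $k-k_1$, every other vertex has degree $k$, and every face has size $d$ except the outer face, of size $s$ with $|F^{*}|=d+s$. If $H$ is $2$-connected and $1<k-k_1<k$, then $H$ is a $(k;k-k_1\mid d)$-endblock, again contradicting Theorem~\ref{thm:no-endblocks}; if $H$ has a cut vertex one repeats the argument on a leaf block of $H$; and the boundary situations ($k-k_1=1$, disposed of by deleting the resulting cut edge, and $s=d$, which merely produces a strictly smaller all-faces-size-$d$ block) feed an induction on $|V(G)|$. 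The finitely many irreducible configurations that survive are constrained by Euler's formula over the five admissible pairs $(k,d)\in\{(3,3),(3,4),(3,5),(4,3),(5,3)\}$ and are eliminated individually. I expect this degenerate case — showing that no $2$-connected plane graph can be ``almost $k$-regular'' with all faces of size $d$ — to be the only genuine obstacle, the remainder being a direct consequence of the absence of endblocks.
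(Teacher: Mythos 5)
Your overall strategy --- pass to a leaf block of the block--cut-vertex decomposition and contradict Theorem~\ref{thm:no-endblocks} --- is exactly the route the paper takes (it disposes of this statement in one sentence, citing \cite{FKQ-1nP} for the details), and you deserve credit for spotting the point that the one-sentence version glosses over: a leaf block $B$ with attachment vertex $v$ is a $(k;k_1|d)$-endblock in the sense of Definition~\ref{def:endblock} only if the face $F_0$ of $B$ facing the rest of $G$ has length different from $d$, since that definition insists on $h\neq d$. However, your treatment of the remaining case is not a proof. A $2$-connected plane graph with one vertex of degree $k_1<k$, all other vertices of degree $k$, and \emph{all} faces of size $d$ is not excluded by Euler counting alone: for $(k,d)=(3,4)$, $k_1=2$ the parameters $|V|=7$, $|E|=10$, $|F|=5$ are consistent, and for $(5,3)$ one gets $|V|=7+k_1\in\{9,10,11\}$. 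Eliminating these configurations is genuine work, and ``the finitely many irreducible configurations that survive \dots are eliminated individually,'' together with ``I expect this \dots to be the only genuine obstacle,'' is an announcement of intent rather than an argument. The proposed recursion on $H=G-(V(B)\setminus\{v\})$ is also shakier than you suggest: $H$ need not be $2$-connected, the face of $H$ left behind by $B$ can have length $s<3$ (its boundary is a closed walk through the cut vertex, possibly traversing a bridge twice), and when $k-k_1=1$ deleting the pendant vertex lowers the degree of its neighbour, so the induction does not obviously land in one of your named base cases. Until the all-faces-of-size-$d$ block is actually excluded --- or you verify that the endblock theorem is proved in \cite{FKQ-1nP} without the hypothesis $h\neq d$, in which case you can simply invoke that stronger form --- the proof has a genuine gap.

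One smaller point: the assertion that ``the boundary of any face of $G$ lies in the edge set of at most one block'' is false; the exceptional face of a connectivity-one $1$-nearly Platonic graph is self-touching at cut vertices and its boundary walk runs through several blocks. The conclusion you actually need --- that $F^{*}$ occurs as an \emph{internal} face of at most one leaf block --- is true for the trivial reason that $F^{*}$ is a single face and the internal faces of distinct blocks are distinct faces of $G$; state it that way.
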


\section{Touching exceptional faces}\label{sec:touching}

\subsection{No self-touching exceptional face}\label{subs:n-self-touch}

First we observe that if there are two touching exceptional faces, each of them must be touching the other but not itself.

\begin{lemma}\label{lem:no-self-touch}
There is no self-touching exceptional face in any connected $2$-nearly Platonic graph.
\end{lemma}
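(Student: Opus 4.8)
The plan is to show that a self-touching exceptional face forces $G$ to have a cut-vertex, and then to extract an endblock from the block structure, contradicting Theorem~\ref{thm:no-endblocks}.

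\emph{Step 1: a self-touching face produces a cut-vertex.} Suppose $G$ is a connected $2$-nearly Platonic graph and some exceptional face $F$ is self-touching, with a vertex $z$ occurring at two distinct angular sectors $\sigma_1,\sigma_2$ in the rotation at $z$. These two sectors split the cyclic list of edges incident with $z$ into two nonempty arcs; call their other endpoints the \emph{left} and \emph{right} neighbours of $z$. If some path of $G$ avoiding $z$ joined a left neighbour $a$ to a right neighbour $b$, then adding the edges $za$ and $zb$ would close a cycle $C$ having exactly one of $\sigma_1,\sigma_2$ in its interior; but then the connected region $F$ would meet both the interior and the exterior of $C$, impossible since a cycle separates the plane. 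Hence $z$ is a cut-vertex and $G$ has connectivity $1$.

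\emph{Step 2: passing to a leaf block.} Since $k\ge 3$ and $G$ is $k$-regular, $G$ has no vertex of degree $1$, so every leaf of the block--cut-vertex tree of $G$ is a $2$-connected block $B$ containing exactly one cut-vertex $x$ of $G$; every other vertex of $B$ has all its incident edges in $B$, hence degree $k$, while $\deg_B(x)=k_1$ with $2\le k_1<k$. By Theorem~\ref{thm:2-conn-edges-on-bound} every face of $B$ is a cycle. Exactly one face of $B$, say $F_B$, contains the rest of $G$ in its interior: it is incident with $x$, it is not a face of $G$, and every other face of $B$ is genuinely a face of $G$, hence a common $d$-cycle or one of the two exceptional faces of $G$.

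\emph{Step 3: forcing an endblock.} A self-touching exceptional face cannot be contained in a $2$-connected subgraph, for its self-touching vertex would be a cut-vertex of that subgraph; thus $F$ is not among the faces of $B$ other than $F_B$. Using that $G$ has at least two leaf blocks and only two exceptional faces — together with the fact that the closed walk bounding an exceptional face spans a connected subtree of the block--cut-vertex tree — I would argue that a leaf block $B$ can be chosen (or manufactured by first splitting $G$ at $z$ along $F$) so that none of its faces other than $F_B$ is exceptional, so that all faces of $B$ except $F_B$ are $d$-cycles. If also $|F_B|\ne d$, then $B$ is a $(k;k_1|d,\langle |F_B|\rangle)$-endblock, contradicting Theorem~\ref{thm:no-endblocks}. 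It remains to exclude the flat case $|F_B|=d$, i.e.\ that $B$ is $k$-regular except at $x$ with every face a $d$-cycle: writing $n=|V(B)|$, Euler's formula together with $2|E(B)|=k(n-1)+k_1$ and $d\,|F(B)|=2|E(B)|$ gives $2d(n-2)=(d-2)\bigl(k(n-1)+k_1\bigr)$, which over the only admissible pairs $(k,d)\in\{(3,3),(3,4),(3,5),(4,3),(5,3)\}$ with $2\le k_1\le k-1$ leaves at most finitely many numerical candidates, each eliminated directly or by a smallest-counterexample reduction.

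The main obstacle is the bookkeeping in Step~3: an exceptional face may straddle several blocks, so locating (or building, by splitting at $z$) a leaf block whose only non-$d$ face is $F_B$ needs care, and the flat case $|F_B|=d$ must be disposed of separately, most plausibly via a minimal-counterexample argument rather than pure arithmetic. Steps~1 and~2 are routine given Theorems~\ref{thm:2-conn-edges-on-bound} and~\ref{thm:no-endblocks}.
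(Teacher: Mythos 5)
Your overall strategy is exactly the paper's: a self-touching exceptional face forces a cut-vertex, hence a leaf block of the block--cut-vertex tree; that leaf block is then identified as an endblock and eliminated by Theorem~\ref{thm:no-endblocks}. In fact the paper's entire proof consists of your Steps~1 and~2 followed by the single assertion that, since $\delta(G)\ge 3$, the leaf block ``must be an endblock as defined above'' (the hypothesis $\delta(G)\ge3$ is only used to rule out a leaf block isomorphic to $K_2$). Your Step~1 supplies a Jordan-curve argument that the paper omits entirely, and that argument is correct.

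The issue is Step~3, where you have correctly located the two points the paper glosses over but have not closed either of them. First, the chosen leaf block $B$ could a priori contain the \emph{second} exceptional face of $G$ among its faces other than $F_B$, in which case $B$ has two non-$d$ faces and is not an endblock in the sense of Definition~\ref{def:endblock}; your observation that the block--cut tree has at least two leaves while $G$ has only two exceptional faces, one of which (being self-touching) cannot be a face of any single $2$-connected block, is the right idea, but it is left at the level of ``I would argue.'' Second, the flat case $|F_B|=d$ genuinely falls outside the literal statement of Theorem~\ref{thm:no-endblocks}, because Definition~\ref{def:endblock} requires the exceptional face of an endblock to have degree $h\ne d$; ruling it out needs the Euler/degree count you set up (or a strengthened endblock theorem allowing $h=d$), and your proposal only promises that the remaining numerical candidates can be eliminated. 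So what you have is an accurate roadmap of the paper's argument together with an explicit list of exactly the steps the paper leaves implicit; to stand on its own, both halves of Step~3 still have to be carried out.
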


\begin{proof}
	Suppose that in a connected $2$-nearly Platonic graph $G$ there is a self-touching exceptional face. Then $G$ has a cut-vertex. It is well known that in such a case there is a block $B$ containing exactly one cut-vertex. (The term \emph{block} is here used in the usual sense, that is, for a maximal $2$-connected subgraph.) Because we have $\delta(G)\geq 3$, block $B$ must be an endblock as defined above, which does not exist by Theorem~\ref{thm:no-endblocks}, which proves the claim.
\end{proof}

Recall that if the exceptional faces are touching at exactly one vertex, then the graph must be of type $(4|3)$ or $(5|3)$.

\subsection{Excluding non-admissible values of $a$}\label{subs:non-adm-a}

Now we reduce the family of blocks that we need to investigate just to the cases where $a\leq d$.

\begin{prop}\label{prop:b-reduction}
	 If there exists a $(k;k-1,k_2|d, \langle a,b\rangle)$-block $B$ with $a>d$, then there exists a $(k;k-1,k_2|d, \langle a-d,b+d\rangle)$-block $B'$.
\end{prop}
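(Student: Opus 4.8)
The plan is to operate surgically on the boundary of the exceptional face of $B$, which is a cycle $C\colon x=x_0,x_1,\dots,x_a=y,x_{a+1},\dots,x_{a+b-1}$ of length $h=a+b$, with $\deg(x)=k-1$ and $\deg(y)=k_2$, and all other faces of degree $d$. Since $a>d$, I want to exhibit a way to carve off a single common face of degree $d$ from the ``$a$-side'' of the block and re-attach what remains so that the new exceptional face has boundary lengths $\langle a-d,b+d\rangle$ while all vertex degrees stay correct and all remaining faces stay of degree $d$. Concretely, consider the common face $F$ (of degree $d$) that lies immediately inside $C$ and shares an edge of the $x$-$y$ subpath $x_0x_1\cdots x_a$; by Theorem~\ref{thm:2-conn-edges-on-bound} the boundary of $F$ is a $d$-cycle, and I would track exactly how $F$'s boundary meets $C$: it shares a subpath $P$ of $C$ (possibly a single edge) lying along $x_0,\dots,x_a$, and the complementary part $Q=\partial F\setminus P$ is a path with the same endpoints as $P$ but lying in the interior.

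The first main step is to argue that $P$ is genuinely a subpath of the segment $x_0\cdots x_a$ and not a longer arc wrapping past $x$ or $y$ — this is where the hypothesis $a>d$ (so in particular $a\ge d+1> $ length of $\partial F$ minus one) and $\deg(x)=k-1$ do their work, together with $2$-connectivity: because $\deg(x)=k-1<k$, the vertex $x$ has only one neighbor along $C$ on the $a$-side inside the block besides its neighbors on $C$ itself, and a short counting argument on the faces incident to $x$ pins down that $F$ cannot straddle $x$. I then delete from $B$ the interior of $P$ — more precisely, delete the internal vertices and edges of $P$ and the face $F$, so that the old boundary cycle $C$ gets its arc $P$ replaced by $Q$. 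The new outer boundary $C'$ then has length $|C|-|P|+|Q| = h - |P| + (d-|P|) = h + d - 2|P|$; to land exactly on $\langle a-d, b+d\rangle$, i.e. total length $h$ unchanged...

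— wait: the total boundary length does change, from $h$ to $h+d-2|P|$, and we want it to become $(a-d)+(b+d)=a+b=h$ again, forcing $|P|=d/2$, which is wrong in general. So the correct operation is not deletion but \emph{reflection/relocation}: I take the closed disk bounded by $P\cup Q$ together with everything strictly inside it (which is just the face $F$, a $d$-gon), detach it along $P$, and reattach it along the arc $Q$ on the \emph{other} side, equivalently I ``flip'' the $d$-gon $F$ to the outside of $C$ along $Q$. After the flip, the edges of $P$ become interior edges bounding $F$ on one side and the old interior face on the other; the new exceptional face is bounded by $(C\setminus P)\cup Q$ — but now traversed so that the arc from $x$ to $y$ has length $a-|P|+|Q|=a-|P|+(d-|P|)=a+d-2|P|$. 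The cleanest version, and the one I would actually write, is the special case where $F$ shares exactly one edge with $C$ on the $a$-side ($|P|=1$): then the new $x$-$y$ boundary length is $a-1+(d-1)=a+d-2$, and iterating is messy.

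Given these complications, the honest plan is: show that among the common faces incident to the path $x_0\cdots x_a$ there is one, $F$, whose intersection with $C$ is a path $P$ with $|P|\ge 1$, and whose removal-and-relocation increases the $b$-side by exactly $d$ while decreasing the $a$-side by exactly $d$ — which happens precisely when $F$ ``hangs off'' the arc in such a way that $|Q|-|P|=d$ contribution splits as $+d$ to one side, $-d$ to the other; concretely this is realized by choosing $F$ to be the face sharing the single edge $x_{d-1}x_d$ region... The key structural claim I must nail down is: \emph{the $d$ consecutive boundary edges $x_0x_1,\dots,x_{d-1}x_d$ together with their interior neighborhoods form exactly one common $d$-face, or can be rearranged into one}, so that cutting along $x_0\cdots x_d$ and re-gluing moves precisely a length-$d$ segment from the $a$-side to the $b$-side. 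Here $\deg(x)=k-1$ is essential: it guarantees $x$ lies on only one common face, so the $d$-face along $x_0x_1$ is uniquely determined, and an inductive peeling along the boundary (each step legitimate because each interior vertex has full degree $k$ and $2$-connectivity keeps facial boundaries cycles) shows the first $d$ boundary edges bound a single common face $F_0$; removing $F_0$'s ``outer'' edges and keeping its ``inner'' path yields $B'$ with boundary data $\langle a-d,b+d\rangle$, all vertex degrees preserved (the degree of $x$ may change, but only $k-1$ and $k_2$ are constrained at the two special vertices, and the new special vertex playing the role of $x$ again has degree $k-1$ by the same local count), and all faces other than the exceptional one still of degree $d$. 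I would close by verifying $B'$ is still $2$-connected (no cut vertex is created since we only slid a face across a boundary arc) and that $a-d\ge 0$; if $a-d<d$ we stop, otherwise Proposition applies again, eventually reaching $a\le d$.

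The main obstacle I anticipate is precisely the bookkeeping in that key structural claim — proving that the first $d$ edges of the $x$-$y$ boundary path bound a single common face rather than several, and that the relocation does not accidentally merge two faces, create a multi-edge, or destroy $2$-connectivity. Handling this rigorously requires careful use of Theorem~\ref{thm:2-conn-edges-on-bound} at every step and a clean local-degree count at $x$ exploiting $\deg(x)=k-1$; everything else (Euler-formula consistency, the arithmetic $h\mapsto h$, $a\mapsto a-d$, $b\mapsto b+d$) is routine once the surgery is shown to be well-defined.
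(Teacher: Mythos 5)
Your proposal does not reach a working argument, and the central structural claim it rests on is false. You assert that ``the first $d$ boundary edges $x_0x_1,\dots,x_{d-1}x_d$ bound a single common face $F_0$.'' This is impossible: by Theorem~\ref{thm:2-conn-edges-on-bound} the boundary of a common face is a $d$-cycle with exactly $d$ edges, so a face containing all $d$ of those boundary edges would need a boundary of length at least $d+1$ (those $d$ edges plus a return path from $x_d$ to $x_0$). In reality each boundary edge $x_ix_{i+1}$ lies on its own internal $d$-face, and these faces are in general pairwise distinct (think of a triangulation). Your supporting remark that $\deg(x)=k-1$ forces $x$ to lie on only one common face is also wrong for $k=4,5$, where $x$ lies on $k-2\geq 2$ internal faces. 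Moreover, your own arithmetic for the ``flip'' surgery gives a new $x$--$y$ arc of length $a+d-2|P|$, which is not $a-d$ unless $|P|=d$ (impossible, as above); so even setting aside the false claim, the operation you describe does not produce the block $\langle a-d,\,b+d\rangle$. Finally, you wave off $2$-connectivity of the result (``no cut vertex is created''), but this is exactly the point that needs an argument.

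The correct surgery is much lighter: delete the single edge $x_{d-1}x_d$ and add the single edge $x_0x_{d-1}$, drawn in the outer face alongside the path $x_0,x_1,\dots,x_{d-1}$. The new edge together with the first $d-1$ boundary edges creates one new internal $d$-face $x_0x_1\cdots x_{d-1}$; deleting $x_{d-1}x_d$ merges the internal $d$-face that was incident with it into the outer face. Degrees: $x_0$ rises to $k$, $x_d$ drops to $k-1$ (it had degree $k$ since $a>d$), and $x_{d-1}$ is unchanged; the outer boundary still has total length $a+b$, now split as $a-d$ (from $x_d$ to $y=x_a$) and $b+d$. The only thing that can go wrong is that the merged face contains a boundary vertex $x_j$ with $j>a$, which would make $x_j$ a cut vertex; but then the piece bounded by $x_0,x_{d-1},x_j,x_{j+1},\dots,x_{a+b-1}$ would be a $(k;k_1|d)$-endblock, contradicting Theorem~\ref{thm:no-endblocks}. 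That is the step your proposal is missing, and it cannot be replaced by the face-relocation idea.
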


\begin{proof}
	Take the block $B$, remove edge $x_{d-1}x_{d}$ and replace it by edge $x_0x_{d-1}$. Vertex $x_0$ is now of degree $k$ while $x_{d}$ is of degree $k-1$. The lengths of the boundary segments are now $a-d$ and $b+d$, respectively.
	
	If the graph is no longer 2-connected, then there is a cut-vertex $x_j$ with $j>a$.  The subgraph bounded by $x_0,x_{d-1},x_j,x_{j+1},\dots,x_{a+b-1}$ is then an endblock of type $(k;k_1|d)$ with $2\leq k_1\leq k-1$ and exceptional vertex $x_j$, which does not exist by Theorem~\ref{thm:no-endblocks}. 
	
	Therefore, the graph is still 2-connected, and the conclusion follows.
\end{proof}

Hence, from now on we can only consider  $(k;k-1,k_2|d)$-blocks with $1\leq a\leq d$. First we exclude the existence of $(k;k-1,k_2|d)$-blocks with $a=d$.

\begin{lemma}\label{lem:(k,d)_no_a=d}
	There is no $(k;k-1,k_2|d)$-block with $a=d$ for any admissible $k$.
\end{lemma}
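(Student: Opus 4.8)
The plan is to argue by contradiction and exploit the same edge-swap idea used in Proposition~\ref{prop:b-reduction}, but now pushed to its extremal endpoint. Suppose $B$ is a $(k;k-1,k_2|d,\langle d,b\rangle)$-block. The boundary of the exceptional face runs $x=x_0,x_1,\dots,x_d=y,x_{d+1},\dots,x_{d+b-1}$, and the segment $x_0,x_1,\dots,x_d$ together with the common face it borders inside $B$ forms (a large part of) a $d$-face; in fact the common face incident with edge $x_0x_1$ is bounded by $x_0,x_1,\dots,x_d$ and one further vertex, call it $w$, so that $x_0,x_1,\dots,x_d,w,x_0$ is a $d$-cycle — wait, that has length $d+1$; more carefully, since the $x_i$-path has length $d$, the common face on its inner side must be exactly the cycle $x_0 x_1 \cdots x_d x_0$ of length... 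I should be careful: the inner face bordering $x_0x_1$ has degree $d$, contains the path $x_0,\dots,x_d$ of length $d$, hence must close up directly via the edge $x_d x_0$. So $x_0 x_d = xy$ is an edge of $G$.

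**Key steps, in order.** First I would establish that $xy\in E(G)$ by the face-counting argument just sketched (the inner common face along $x_0x_1$ is forced to be the $d$-cycle through $x_0,\dots,x_d$). Second, I would contract or delete this path: remove the vertices $x_1,\dots,x_{d-1}$ (all of degree $k$) and the edge $xy$, leaving a graph $B'$ on $n-(d-1)$ vertices; the face that was the $d$-cycle $x_0\cdots x_d$ disappears, merging into the exceptional face, which now has boundary $x_0=x, x_{d+1},\dots,x_{d+b-1}, x_0$ reached via... no — once $x_1,\dots,x_{d-1}$ are gone, $x$ and $y$ are joined only through the rest of $B$, and the old exceptional face (length $d+b$) together with the deleted $d$-face merge into a single face of length $b$ (the $b$ edges $x_d x_{d+1},\dots,x_{d+b-1}x_0$). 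Now $\deg_{B'}(x) = k-1-1 = k-2$ and $\deg_{B'}(y)=k_2-1$, all other vertices still have degree $k$, and all faces but the (merged) one have degree $d$. Third, I would check $2$-connectivity of $B'$: if it fails, a cut-vertex yields an endblock of admissible type, contradicting Theorem~\ref{thm:no-endblocks}, exactly as in Proposition~\ref{prop:b-reduction}. So $B'$ is either itself a smaller block of type $(k;k-2,k_2-1|d)$ (if $k-2\geq 2$ and $k_2-1\geq 2$), or — if a degree drops to $1$ — it has a vertex of degree $1$, contradicting that a $2$-connected graph has $\delta\geq 2$; and if a degree drops to exactly... the ranges $2\le k_2\le k_1=k-1<k$ need to be re-examined against $k\in\{3,4,5\}$.

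**Where the real work lies.** The main obstacle is that after the reduction the two exceptional-vertex degrees both drop by one, so $B'$ need not be a legitimate block (the constraint $k_i\geq 2$ may be violated, or $x$ and $y$ may coincide after the path is removed, or $B'$ may collapse to something trivial). I expect the proof must therefore split into cases according to $(k,k_2)$ — recalling from the excerpt that the only live types are $(3|3),(3|4),(3|5),(4|3),(5|3)$, so $k\in\{3,4,5\}$ and $d\in\{3,4,5\}$ — and in each case either derive an immediate contradiction (a degree-$1$ vertex, or a multi-edge violating simplicity when $x_0x_{d-1}$ or $xy$ already exists, or a face of forbidden length) or reduce to an endblock and invoke Theorem~\ref{thm:no-endblocks}. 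A secondary subtlety is confirming that $xy$ is genuinely a new edge (not already present), since if $xy\in E(B)$ already then $B$ is not simple or the $d$-face structure is different — this edge-already-present scenario likely needs a separate short argument, perhaps producing a short face or a small separating set. Handling these degenerate configurations cleanly, rather than the generic swap, is the crux.
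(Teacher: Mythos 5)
Your proposal has a fatal gap at its very first step. You claim that the interior $d$-face incident with the edge $x_0x_1$ must contain the entire boundary segment $x_0,x_1,\dots,x_d$ and hence close up via the edge $x_0x_d$, so that $xy\in E(B)$. This is false. Every internal vertex $x_i$ of that segment ($1\le i\le d-1$) has degree $k\ge 3$, so it has at least one neighbour off the boundary path; consequently the interior face at $x_i$ containing $x_{i-1}x_i$ and the interior face at $x_i$ containing $x_ix_{i+1}$ are \emph{different} faces. The $d$-face along $x_0x_1$ therefore need not contain $x_2$, let alone $x_d$, and there is no reason for $x_0x_d$ to be an edge. Even if one granted the premise, the arithmetic does not work: a path with $d$ edges closed by one additional edge is a cycle of length $d+1$, which cannot bound a face of degree $d$ (you noticed this tension mid-argument but did not resolve it). Since the existence of the edge $xy$ and the subsequent deletion of $x_1,\dots,x_{d-1}$ (with both endpoint degrees dropping by one) are the backbone of your plan, and the remaining case analysis is only sketched as an expectation, the proof does not go through.

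The paper's actual argument is the edge swap of Proposition~\ref{prop:b-reduction} pushed one step further, and it is worth seeing why it avoids your problem: one deletes the boundary edge $x_{d-1}x_d$ and \emph{adds} the edge $x_0x_{d-1}$. This closes the path $x_0,x_1,\dots,x_{d-1}$, which has $d-1$ edges, into a genuine new internal $d$-face; it saturates $x_0$ (degree $k-1$ becomes $k$) and drops $\deg(y)=\deg(x_d)$ to $k_2-1$, while no claim about pre-existing interior faces is needed. If $k_2=2$, the vertex $y$ now has degree one and is deleted, making $x_{d+1}$ the unique deficient vertex of a $(k;k-1|d)$-endblock; if $k_2\ge3$ (forcing $d=3$), one checks $2$-connectivity as in Proposition~\ref{prop:b-reduction} and again lands on an endblock. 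Both outcomes contradict Theorem~\ref{thm:no-endblocks}. If you replace your ``the face closes up via $x_0x_d$'' step by this add-$x_0x_{d-1}$ swap, the rest of your endblock-contradiction strategy is essentially the right one.
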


\begin{proof}
	Suppose such a block exists. 	First assume $k_2=2$. We remove the edge $x_{d-1}x_d=x_{d-1}y$ and replace it by the edge $x_0x_{d-1}=xx_{d-1}$. This way we obtain a new internal face of size $d$. Vertex $x_0$ is now of degree $k$, vertex $x_d=y$ is of degree one and all other vertices are of degree $k$.
	
	Then we remove the vertex $y=x_d$, and vertex $x_{d+1}$ is now of degree $k-1$. But then we have a $(k;k-1|d)$-endblock, which does not exist by Theorem~\ref{thm:no-endblocks}.

	If $k_2\geq3$, then $4\leq k\leq 5$ and we must have $d=a=3$.	Remove the edge $x_{2}x_3=x_{2}y$ and replace it by the edge $x_0x_{2}=xx_2$. This creates a new internal triangular face. Vertex $x_0$ is now of degree $k$, vertex $x_3=y$ is of degree $k_2-1\geq2$ and all other vertices are of degree $k$.

	Similarly as in Proposition~\ref{prop:b-reduction}, if the graph now has a cut vertex $x_j$ originally belonging to the triangle $x_{2},x_3,x_j$, then the graph bounded by the cycle $x_0,x_2,x_j,x_{j+1},\dots,x_{3+b-1}$ is  an endblock of type $(k,d;k_1)$ with $2\leq k_1\leq k-1$, which does not exist by Theorem~\ref{thm:no-endblocks}. 
	
	So the graph must be still 2-connected. The edge $x_2x_3$ must have belonged to a triangle $x_2,x_3,z$ and  we have a $(k;k_2-1|d)$-endblock with boundary $x_0,x_2,z,x_3,\dots,x_{a+b-1}$ and $x_3$ of degree $k_2-1\geq 2$, which does not exist by Theorem~\ref{thm:no-endblocks}.  
\end{proof}

The case of $a=d-1$ can be easily excluded for $k_2<k-1$.

\begin{lemma}\label{lem:(k,d)_no_a=d-1}
	There is no $(k;k-1,k_2|d)$-block with $a=d-1$ and $k_2<k-1$ for any admissible $k$.
\end{lemma}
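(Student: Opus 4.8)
The statement to prove is Lemma~\ref{lem:(k,d)_no_a=d-1}: there is no $(k;k-1,k_2|d)$-block with $a=d-1$ and $k_2<k-1$. The plan is to mimic the edge-swapping trick used in Proposition~\ref{prop:b-reduction} and Lemma~\ref{lem:(k,d)_no_a=d}, but now shifting the boundary so that a new internal face of size $d$ is created while the remaining face shrinks, ultimately producing a forbidden endblock. Concretely, I would take the block $B$ with boundary path $x=x_0,x_1,\dots,x_{a}=y,x_{a+1},\dots,x_{a+b-1}$ where $a=d-1$, and try to add the edge $x_0x_{d-1}=x_0y$. Since the boundary segment from $x_0$ to $x_{d-1}=y$ has length $d-1$, adding this edge cuts off a new internal face of size $(d-1)+1=d$, as desired; the big exceptional face now has its boundary shortened, and its two special vertices become $x_0$ (now of degree $k$, hence no longer special) and $y$ (now of degree $k_2+1$).

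First I would check that this edge is not already present: since $a=d-1$ and we need the swap to be legitimate, the possible coincidence $x_0=x_{d-1}$ or an existing chord $x_0y$ would have to be ruled out, but $x_0\neq x_{d-1}$ because the boundary of the exceptional face is a cycle by Theorem~\ref{thm:2-conn-edges-on-bound} (using $2$-connectivity and $d\geq 3$), and an existing chord would force a face of size less than $d$ on one side — this needs a short argument depending on the admissible values of $d$. Next, as in the earlier proofs, I would handle the possibility that adding the edge destroys $2$-connectivity: if a cut-vertex $x_j$ with $j>a$ appears, then the subgraph bounded by the cycle $x_0,x_{d-1}=y,x_j,x_{j+1},\dots,x_{a+b-1}$ is an endblock of type $(k;k_1|d)$ with $2\leq k_1\leq k-1$, which is forbidden by Theorem~\ref{thm:no-endblocks}. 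So the graph stays $2$-connected after the swap. Now the resulting graph $G'$ has exactly one vertex, namely $y$, of degree $k_2+1$ off the common value $k$ (here using $k_2<k-1$, so $k_2+1<k$, so $y$ is still a low-degree vertex), lying on a single non-$d$ face — i.e., $G'$ is a $(k;k_2+1|d)$-endblock, contradicting Theorem~\ref{thm:no-endblocks}.

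The case split I anticipate needing is on $k_2$: if $k_2=2$, then after the swap $y$ has degree $3$, and provided $3<k$ (true when $k\in\{4,5\}$, and also handled separately when $k=3$ since then $k_2<k-1=2$ is impossible, so $k=3$ never arises here) we directly get a forbidden endblock; if $k_2\geq 3$ then $k\in\{4,5\}$ and $d$ is constrained, and $y$ has degree $k_2+1$ which still satisfies $2\leq k_2+1<k$ exactly because of the hypothesis $k_2<k-1$. I would also double-check the degenerate sub-case where the new internal face of size $d$ might coincide with or overlap the old exceptional region in a way that the "endblock" I extract is not genuinely $2$-connected — but this is exactly the cut-vertex analysis above, so it is already covered.

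The main obstacle I expect is the bookkeeping around the possibility that the chord $x_0y$ (or an edge arising implicitly from the swap) already exists in $B$, or that the segment $x_0,\dots,x_{d-1}$ is not an induced path, so that "cutting off a face of size $d$" is not literally what happens. Handling this cleanly likely requires invoking that every internal face already has size exactly $d$ together with Theorem~\ref{thm:2-conn-edges-on-bound}, and arguing that any shortcut among $x_0,\dots,x_{d-1}$ would bound an internal region of size strictly less than $d$, which is impossible. Once that is pinned down, the rest is the same endblock-contradiction template already used twice in the preceding lemmas.
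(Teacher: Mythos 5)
Your proposal is correct and is essentially the paper's own proof: add the edge $xy=x_0x_{d-1}$ to cut off a new internal face of size $d$, leaving $x$ saturated and $y$ of degree $k_2+1<k$, so the result is a forbidden endblock by Theorem~\ref{thm:no-endblocks}. The extra cut-vertex analysis you carry over from Proposition~\ref{prop:b-reduction} is unnecessary here, since this operation only adds an edge (none is removed), which cannot destroy $2$-connectivity.
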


\begin{proof}
	If such a block exists, then by adding edge $xy$ we create a new internal face of size $d$. Vertex $y$ is still of degree less than $k$ while all other vertices are of degree $k$. This new graphs would be a $(k;k_2|d)$-endblock, which does not exist by Theorem~\ref{thm:no-endblocks}.  
\end{proof}

Now we exclude the existence of $(k;k-1,k_2|d)$-blocks with $a=1$.

\begin{lemma}\label{lem:(k,d)_no_a=1}
	There is no $(k;k-1,k-1|d)$-block with $a=1$ for any admissible $k$.
\end{lemma}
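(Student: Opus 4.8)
The plan is to separate the admissible types with $d=3$, namely $(3|3)$, $(4|3)$, $(5|3)$, from the remaining two, $(3|4)$ and $(3|5)$, for which $k=3$. Throughout, the exceptional (outer) face of a hypothetical $(3;k-1,k-1|d,\langle 1,b\rangle)$-block $B$ has boundary cycle $x=x_0,x_1=y,x_2,\dots,x_b,x_0$ of some length $h\neq d$, and $\deg(x)=\deg(y)=k-1$, with $x$ and $y$ adjacent since $a=1$.

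Assume first that $d=3$. I would insert a new vertex $v$ in the exceptional face and join it to both $x$ and $y$. Then $\deg(x)=\deg(y)=k$ and $\deg(v)=2<k$; the exceptional face splits into the triangle $xvy$, which is a common face because $d=3$, and a single new face of size $h+1\neq 3$ whose boundary contains $v$; and the new graph is still $2$-connected, because $v$ is a degree-$2$ vertex attached to two vertices of a $2$-connected graph. Hence we obtain a $(k;2|d,\langle h+1\rangle)$-endblock, which does not exist by Theorem~\ref{thm:no-endblocks}; contradiction.

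For the types $(3|4)$ and $(3|5)$ we have $k=3$ and $\deg(x)=\deg(y)=2$. By Theorem~\ref{thm:2-conn-edges-on-bound} the edge $xy$ lies on the exceptional face and on exactly one common face $F$ of size $d$; since $x$ and $y$ have degree $2$, both edges at $x$ and both edges at $y$ lie on $\partial F$, so $\partial F$ contains the path $x_b,x_0,x_1,x_2$ and is completed by a path $Q$ of length $d-3$ joining $x_2$ to $x_b$. I would then delete $x_0$ and $x_1$: the vertices $x_2$ and $x_b$ drop to degree $2$, all other degrees stay equal to $3$, and $F$ merges with the exceptional face into a single face bounded by the cycle formed by $Q$ together with the path $x_2,x_3,\dots,x_b$, of size $h+d-6<h$. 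If the resulting graph is not $2$-connected, a cut vertex gives rise to an endblock exactly as in the proof of Proposition~\ref{prop:b-reduction}, contradicting Theorem~\ref{thm:no-endblocks}; otherwise it is again a $2$-connected plane graph which is cubic apart from the two degree-$2$ vertices $x_2,x_b$ and has all faces of size $d$ except one strictly smaller exceptional face, so an induction on $h$ applies. For the base, $h=3$ is impossible because then $\partial F$ would be forced to be a triangle, contradicting $d\geq 4$; and whenever the reduced exceptional face has size exactly $d$, Euler's formula pins down the order of the graph, leaving only finitely many graphs to examine directly.

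The main obstacle is precisely this $k=3$ part. The descent does not stay inside the family $a=1$ — for $d=5$ it produces $(3;2,2|d)$-blocks with $a=2$ — so one must either run a combined induction on the size of the exceptional face over all values of $a$ for the types $(3|4)$ and $(3|5)$, or dispose of the resulting $a=2$ configurations separately; and one must carefully dispatch the degenerate sub-cases: loss of $2$-connectivity (handled via endblocks), a reduced exceptional face of size exactly $d$ (handled by the Euler-formula finiteness bound and a direct check), and the handful of unavoidable small values of $h$ at the bottom of the induction.
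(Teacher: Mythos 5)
Your $d=3$ case is correct and is exactly the paper's argument: attach a new degree-$2$ vertex to $x$ and $y$, saturating both and producing a $(k;2|3)$-endblock, which is forbidden by Theorem~\ref{thm:no-endblocks}.

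For $k=3$ with $d\in\{4,5\}$, however, what you have written is not yet a proof. Your deletion of $x_0,x_1$ and the descent on $h$ is a legitimate reduction step, but you explicitly leave open every case in which the descent terminates or leaves the hypothesis class: (i) for $d=5$ the very first step produces a $(3;2,2|5)$-block with $a=2$, so the induction is not on the statement of this lemma at all and you would have to import the non-existence of that configuration from elsewhere (in the paper it is only settled later, inside Lemma~\ref{lem:(3,d;2,2)-uniq}); (ii) when $h+d-6=d$, i.e.\ $h=6$ for $d=4$ and $h=5$ for $d=5$, the reduced graph has \emph{all} faces of size $d$ and is no longer a block in the sense of Definition~\ref{def:block}, and your appeal to ``Euler's formula pins down the order, examine finitely many graphs directly'' is exactly the computation you never perform; (iii) the small values of $h$ at the bottom of the induction are likewise deferred. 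Since you yourself flag these as ``the main obstacle,'' the argument has a genuine gap precisely where the work is. The paper avoids the induction entirely with a doubling trick: for $d=4$ it takes two copies of $B$ and adds the edges $xx'$ and $yy'$, creating a new internal $4$-face and a $3$-regular $2$-connected graph whose outer face has size at least six, contradicting Theorem~\ref{thm:no-2-conn-1NP}; for $d=5$ it adds one extra vertex $z$ and edges $xx'$, $yz$, $y'z$ to create an internal $5$-face and a $(k;2|5)$-endblock, contradicting Theorem~\ref{thm:no-endblocks}. You should either adopt such a gluing construction or actually carry out the case analysis you have only sketched.
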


\begin{proof}
	Suppose such a block $B$ exists. If $d=3$, then add a new vertex $z$ and edges $xz$ and $yz$. This creates a $(k,2|3)$-endblock, which does not exist by Theorem~\ref{thm:no-endblocks}.
	
	For $d=4$, create a copy $B'$ of $B$ with vertices $x'$ and $y'$ corresponding to $x$ and $y$, respectively. Then add edges $xx'$ and$yy'$ to create a new internal face of size four. All vertices in this new graph are of degree $k$, and the outer face is of size at least six. The resulting graph would now be  2-connected and 1-nearly Platonic, but such a graph does not exist by Theorem~\ref{thm:no-2-conn-1NP}.
	
	For $d=5$, again create $B'$ as above, and an extra vertex $z$. Add edges $xx', yz, y'z$ to obtain a new internal face $x,x',y',z,y$ of size five, vertex $z$ of degree two, and all other vertices of degree $k$. The new graph now would be a $(k;2|5)$-endblock, which does not exist by Theorem~\ref{thm:no-endblocks}.

\end{proof}

For $k=5$, we can even exclude large values of $a$ even for $k_1=k-2=3$.

\begin{lemma}\label{lem:(5,3;3,d_2)-reduction}
	If there is a $(5;3,d_2|3,\langle a,b\rangle)$-block $B$ with $a>3$, then there is a $(5;3,d_2|3,\langle a-3,b+3\rangle)$-block. Consequently, there exists a $(5;3,d_2|3,\langle a',b'\rangle)$-block $B'$  with $1\leq a'\leq3$.
\end{lemma}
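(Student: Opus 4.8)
The plan is to prove the displayed reduction by a local surgery near the vertex $x=x_0$ of degree $3$, in the spirit of Proposition~\ref{prop:b-reduction}, and then to obtain the ``consequently'' part by iteration. Throughout, $B$ is $2$-connected, so by Theorem~\ref{thm:2-conn-edges-on-bound} the exceptional face is bounded by the cycle $x_0x_1\cdots x_{a+b-1}$ and every edge lies on exactly two faces; since $d=3$, all internal faces are triangles. Because $a>3$, the vertices $x_0,x_1,x_2,x_3$ are distinct; the edge $x_2x_3$ lies on the exceptional face and on a unique internal triangle $x_2x_3s$; and the edge $sx_3$ lies on that triangle and on a second internal triangle $x_3sn$. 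The point is that the degree deficiency of $x_0$ equals $k-k_1=2$, so one swap of the Proposition~\ref{prop:b-reduction} type raises $\deg(x_0)$ only to $4$; I would therefore apply two such swaps in succession.

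\emph{The surgery.} First perform the swap ``remove $x_2x_3$, add $x_0x_2$'': drawn through the exceptional face, the new edge $x_0x_2$ cuts off the triangle $x_0x_1x_2$, and deleting $x_2x_3$ merges the triangle $x_2x_3s$ into the exceptional face; only $\deg(x_0)$ (from $3$ to $4$) and $\deg(x_3)$ (from $5$ to $4$) change, the exceptional face is still a closed walk of length $a+b$, now running $\dots,x_0,x_2,s,x_3,x_4,\dots$. Then perform the second swap ``remove $sx_3$, add $x_0s$'': the edge $x_0s$ cuts off the triangle $x_0x_2s$, and deleting $sx_3$ merges the triangle $x_3sn$ into the exceptional face; this time only $\deg(x_0)$ (from $4$ to $5$) and $\deg(x_3)$ (from $4$ to $3$) change, while $x_2$ and $s$ each lost one incident edge and gained one, so their degrees return to $5$, and all remaining degrees are untouched. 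After both swaps every internal face is a triangle, the exceptional face has length $a+b$ and runs $x_0,s,n,x_3,x_4,\dots,x_a,x_{a+1},\dots,x_{a+b-1}$, and the two vertices of degree different from $5$ are $x_3$ (of degree $3$) and $y=x_a$ (of degree $d_2$). The two segments of the exceptional face between $x_3$ and $x_a$ have lengths $a-3$ (along $x_3,x_4,\dots,x_a$) and $b+3$ (along $x_3,n,s,x_0,x_{a+b-1},\dots,x_a$), so the graph obtained is a $(5;3,d_2|3,\langle a-3,b+3\rangle)$-block, provided it is $2$-connected.

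The two things still to check are exactly those that arise in Proposition~\ref{prop:b-reduction}. First, $2$-connectivity can fail only if one of the added chords creates a cut-vertex $x_j$ with $j>a$; but then the piece cut off is an endblock of type $(5;k_1|3)$ with $2\le k_1\le 4$ and exceptional vertex $x_j$, which does not exist by Theorem~\ref{thm:no-endblocks}, so no cut-vertex appears. Second, one must rule out degenerate coincidences among $x_0,x_1,x_2,x_3,s,n$ (for instance $s$ or $n$ lying on the exceptional face, $s=n$, or one of the two proposed new edges already being present); each such configuration either forces the exceptional face to be too short to be compatible with $a>3$, or again exposes an endblock contradicting Theorem~\ref{thm:no-endblocks}. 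I expect these degeneracies to be the fussiest part of the write-up, but they are routine in the style of Lemmas~\ref{lem:(k,d)_no_a=d}--\ref{lem:(k,d)_no_a=1}.

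Finally, for the ``consequently'' part: if $a-3>3$ we repeat the reduction. Since $a$ decreases by exactly $3$ at each step and starts at some $a\ge 4$, it stays $\ge 1$ and after finitely many steps we reach a $(5;3,d_2|3,\langle a',b'\rangle)$-block with $1\le a'\le 3$.
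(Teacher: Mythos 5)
Your proposal is correct and follows essentially the same route as the paper: the identical two-step surgery (replace $x_2x_3$ by $x_0x_2$, then replace $zx_3$ by $x_0z$, where $z=s$ is the third vertex of the internal triangle on $x_2x_3$), the same appeal to Theorem~\ref{thm:no-endblocks} to exclude the degenerate case where $z$ lies on the boundary segment beyond $x_a$, and the same iteration to reach $1\leq a'\leq 3$. Your write-up is in fact slightly more careful than the paper's in tracking the new boundary walk and flagging the remaining coincidence cases.
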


\begin{proof}
	As always, $b\geq a$ by our assumptions above. 
	
	First, call $z$ the common neighbor of $x_2$ and $x_3$. Take the edge $x_2x_3$ and replace it by edge $x_0x_2$. We have a new triangular face $x_0,x_1,x_2$ and 
	$\deg(x_0)=\deg(x_3)=4$ and $\deg(x_a)=d_2$. 
	Now take the edge $zx_3$ and replace it by edge $x_0z$.
	We have a new triangular face $x_0,x_2,z$ and 
	$\deg(x_0)=5,\deg(x_3)=3$ and $\deg(x_a)=d_2$. 
	If the original triangular face $x_2,x_3,z$ had $z=x_j$ for some $j>a$, then the cycle 		
	$x_0,z=x_j,x_{j+1},\dots,x_{a+b-1}$ 
	is now bounding a $(5;3|d')$-endblock for some $d'\leq 4$, which is impossible by Theorem~\ref{thm:no-endblocks}.
	
	Hence, $z$ is an inner vertex of the block $B$. But in this case we have obtained another $(5;3,d_2|d,\langle a-3,b+3\rangle)$. If $a-3\leq3$, we are done. Otherwise, we repeat the reduction until we arrive at a block $(5;3,d_2|d,\langle a',b'\rangle)$ with $a'=a-3t\leq3$ as desired.	 
\end{proof}

\subsection{Uniqueness of $(3;2,2|d)$-blocks}\label{subs:(3;2,2|d)-unique}

	We can now show uniqueness of the $(k;k-1,k-1|d)$-blocks for all $d=3,4,5$. 

\begin{lemma}\label{lem:(3,d;2,2)-uniq}
	The $(3;2,2|d)$-blocks are unique for each $d=3,4,5$. 
\end{lemma}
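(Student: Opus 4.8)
The plan is to exhibit each of the three blocks explicitly and then argue that no other block of that type can exist, using the reductions already established. Recall that a $(3;2,2|d)$-block is a $2$-connected planar cubic-except-for-two-degree-$2$-vertices graph with all internal faces of size $d$ and a single exceptional (outer) face of length $h=a+b$ containing the two degree-$2$ vertices $x$ and $y$, with $a\le b$ and $1\le a\le d$. By Lemma~\ref{lem:(k,d)_no_a=d} we may assume $a\le d-1$, by Lemma~\ref{lem:(k,d)_no_a=1} we may assume $a\ge 2$ (since here $k_1=k_2=k-1=2$), and by Lemma~\ref{lem:(k,d)_no_a=d-1} we may assume $k_2\ge k-1$, which holds with equality; so in fact for $d=3$ the only surviving case is $a=b=2$ — wait, $a=d-1=2$ is not excluded when $k_2=k-1$, so for $d=3$ we have $a=2$, and for $d=4$ we have $a\in\{2,3\}$, and for $d=5$ we have $a\in\{2,3,4\}$. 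I would tabulate these surviving cases at the outset.

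The next step is to pin down $b$, or rather to observe that the block is determined by a simple ladder-type structure. The canonical $(3;2,2|d)$-block is the \emph{$d$-prism minus an edge}, equivalently the graph obtained from two paths $x=u_0,u_1,\dots,u_{b}$ and $y=v_0,v_1,\dots,v_{b}$ (think of $x,y$ as the degree-$2$ ends) joined by rungs; more precisely I would describe it as the planar graph whose inner faces are $b-1$ quadrilaterals when $d=4$, and analogously strips of triangles ($d=3$) or pentagons ($d=5$) glued in a row, with $x$ and $y$ the two vertices of degree $2$ sitting at one end of the strip. One checks directly that this graph is $2$-connected, has exactly two vertices of degree $2$ and all others of degree $3$, all internal faces of size $d$, and outer face of the prescribed length; this establishes existence. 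For uniqueness I would proceed by induction on the number of internal faces: in any such block, consider the internal face $Q$ containing the edge $xy$-side — actually, since $x$ has degree $2$, its two incident edges $xx_1$ and $xx_{a+b-1}$ lie on the outer face and on one common internal face $Q$ of size $d$; I peel off $Q$ (delete the appropriate vertices/edges of $Q$ not shared with the rest) and check that what remains is again a $(3;2,2|d)$-block with fewer faces — or, in the base case, a single face, forcing the whole graph to be $Q$ itself together with the minimal configuration. The degree bookkeeping (the neighbors of $x$ drop to degree $2$ after peeling, becoming the new exceptional vertices) is what makes the induction go through, and Theorem~\ref{thm:no-endblocks} together with the earlier lemmas rules out the degenerate ways the peeling could fail (e.g., creating a cut vertex or an endblock).

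I expect the main obstacle to be the uniqueness argument rather than existence: one must show that the internal faces really are forced to be glued in a single linear strip with no branching and no ``wrapping around,'' i.e.\ that the dual of the internal faces is a path. The danger is that, a priori, internal $d$-gons could be attached to $Q$ along more than one edge or that the block could close up in a more complicated way while still respecting all the degree and face-size constraints. The resolution is that the two degree-$2$ vertices are the only places the boundary can ``turn,'' so every internal vertex has degree exactly $3$ and hence lies on exactly $3$ faces, and a careful local analysis at each such vertex (two of its three faces being $d$-gons already forces the third) propagates the strip structure; combined with $2$-connectedness this forces the dual-path shape. Once that structure is established, $a$ and $b$ are determined by where along the strip $x$ and $y$ sit, and the surviving cases enumerated above each yield exactly one block up to isomorphism, so in particular every $(3;2,2|d)$-block is balanced with $a=b$ — giving the claimed uniqueness for each $d=3,4,5$.
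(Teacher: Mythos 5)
Your case enumeration of the admissible values of $a$ matches the paper's, but the core of your argument --- the explicit description of the blocks and the peeling induction --- does not work. The unique $(3;2,2|d)$-blocks are the tetrahedron, the cube, and the dodecahedron, each with one edge deleted; they are not prisms minus an edge, nor ``strips of $d$-gons glued in a row.'' A linear strip of pentagons has far more than two vertices of degree $2$, so it is not a $(3;2,2|5)$-block at all; and even for the cube minus an edge the four surviving internal quadrilateral faces do not form a path in the dual (one of them is adjacent to all three others), so the invariant your induction relies on --- that peeling off the internal face at $x$ leaves a smaller block of the same linear shape --- is false. The induction would ``establish'' a structure that the actual blocks contradict.

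The missing idea is the one the paper's proof turns on: for the maximal admissible value $a=d-1$, add the single edge $xy$ to obtain a $2$-connected $3$-regular plane graph all of whose internal faces have size $d$; Theorem~\ref{thm:no-2-conn-1NP} then forces the outer face to have size $d$ as well, so the completed graph is the Platonic solid of type $(3|d)$ and the block is that solid minus one edge --- existence and uniqueness in one stroke. You also leave the non-maximal surviving values of $a$ unresolved ($a=2$ for $d=4$; $a=2,3$ for $d=5$): these do not each ``yield exactly one block'' as you assert (that would contradict the uniqueness you are trying to prove); they must be eliminated by separate constructions (attaching a new vertex adjacent to $x$ and $y$ to create a forbidden $(3;2|d)$-endblock, or gluing two copies of the block to create a forbidden $2$-connected $1$-nearly Platonic graph), as the paper does via Theorems~\ref{thm:no-endblocks} and~\ref{thm:no-2-conn-1NP}.
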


\begin{proof}
\textbf{Case $d=3$}	

	By Proposition~\ref{prop:b-reduction} and Lemmas~\ref{lem:(k,d)_no_a=d} and~\ref{lem:(k,d)_no_a=1}, we only need to investigate the case $a=2$.

	If $a=2$, then we can add the edge $xy=x_0x_2$ to obtain a $3$-regular $2$-connected graph with all faces size $d=3$, except  possibly the outer one. If the new outer face is of size greater than three, then we have obtained a 2-connected  1-nearly Platonic graph.  By Theorem~\ref{thm:no-2-conn-1NP}, there is no such graph, hence the outer face must be a triangle $x_0,x_2,x_3$ and the new graph is the tetrahedron. Thus, the original $(3;2,2|3)$-block was the tetrahedron without one edge shown in Figure~\ref{fig:(3;2,2|3)}.

	\begin{figure}[H]
		\centering
		\includegraphics[scale=.08]{./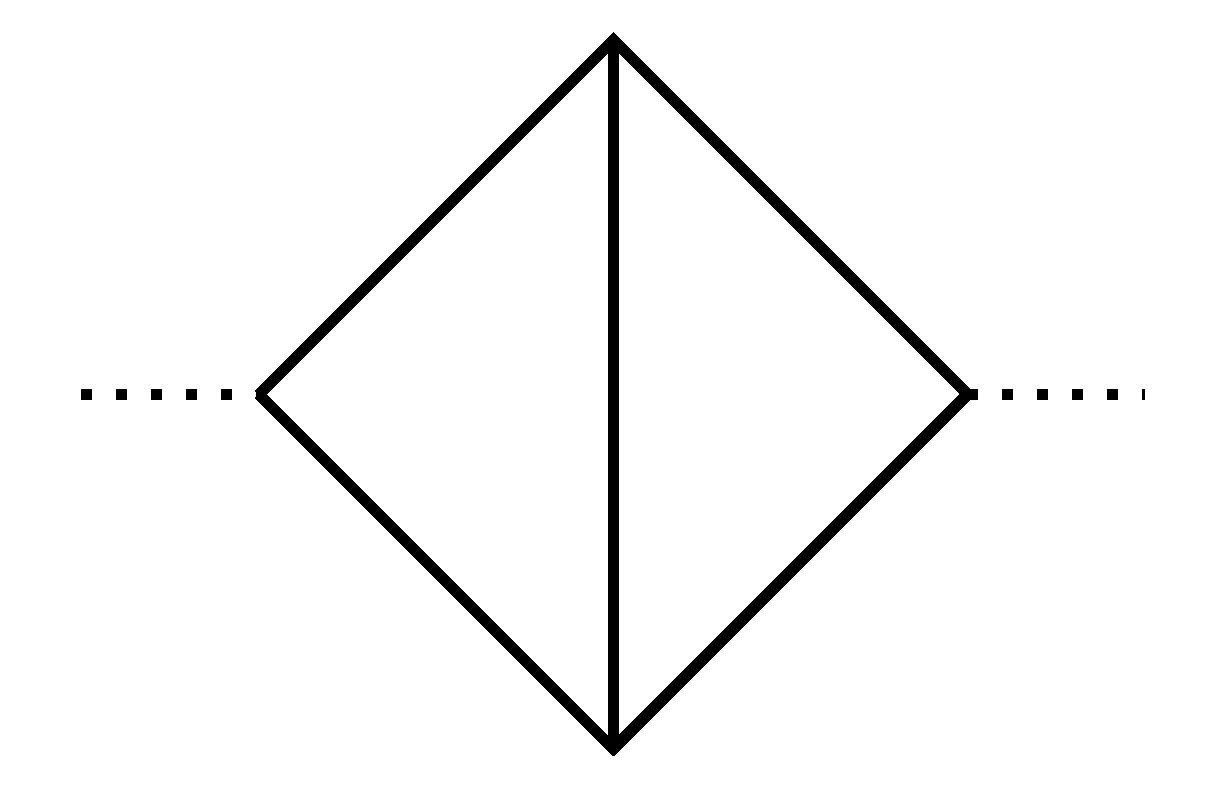}
		\caption{Unique $(3;2,2|3)$-block}\label{fig:(3;2,2|3)}
	\end{figure}

\vskip6pt
\noindent	
\textbf{Case $d=4$}	

	Similarly as above, by Proposition~\ref{prop:b-reduction} and Lemmas~\ref{lem:(k,d)_no_a=d} and~\ref{lem:(k,d)_no_a=1}, we only need to consider $2\leq a\leq 3$. 

	If $a=2$, by adding a new vertex $z$ and edges $x_0z$ and $zx_2$, both $x_0$ and $x_2$ now have degree 3, and we obtain a $(3;2|4)$-endblock, which does not exist by Theorem~\ref{thm:no-endblocks}.
	
	When $a=3$, then by adding the edge $xy=x_0x_3$ we obtain a $3$-regular graph $2$-connected graph with all faces except possibly the outer one of size $d=4$. By Theorem~\ref{thm:no-2-conn-1NP}, there is no 2-connected 1-nearly Platonic graph, hence the outer face must be a 4-cycle $x_0,x_3,x_4,x_5$ and the new graph is the cube. Thus, the original $(3;2,2|4)$-block was the cube without one edge shown in Figure~\ref{fig:(3;2,2|4)}.

	\begin{figure}[H]
		\centering
		\includegraphics[scale=.15]{./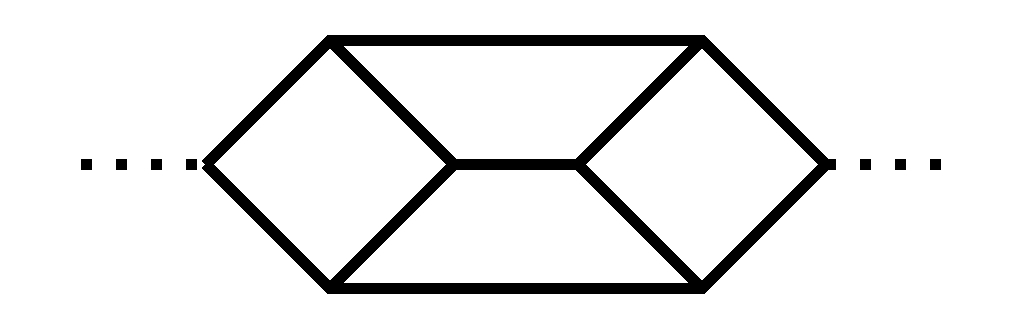}
		\caption{Unique $(3;2,2|4)$-block}\label{fig:(3;2,2|4)}
	\end{figure}

\vskip6pt
\noindent
\textbf{Case $d=5$}	

	We must consider only $2\leq a\leq 4$ as proved in Proposition~\ref{prop:b-reduction} and Lemmas~\ref{lem:(k,d)_no_a=d} and~\ref{lem:(k,d)_no_a=1}.

	For $a=2$, we take two copies of $B$ and add two new vertices $z_1,z_2$ and edges
	$z_1z_2,xz_1,yz_2,x'z_1,y'z_2$. This creates two new faces of size five, bounded by  cycles 
	$x=x_0,x_1,x_2=y,z_2,z_1$ and $x'=x'_0,x'_1,x'_2=y',z_2,z_1$. The outer face of this new amalgamated graph is of size at least eight, which is impossible, since the graph would be 2-connected and 1-nearly Platonic, which is impossible by Theorem~\ref{thm:no-2-conn-1NP}.
	
	For $a=3$, adding a new vertex $z$ and edges $xz=x_0z$ and $zy=zx_3$ we would obtain a $(3;2|5)$-endblock, which does not exist by Theorem~\ref{thm:no-endblocks}. 
	
	Finally, when $a=4$, by adding the edge $xy=x_0x_4$ we get a new face of size five bounded by $x_0,x_1,x_2,x_3,x_4$ and all vertices are now of degree three. By Theorem~\ref{thm:no-2-conn-1NP}, there are no 2-connected 1-nearly Platonic graphs. Since our new graph is 2-connected, the new outer face must be a pentagon as well. Thus, the new graph is a dodecahedron and the original graph was a dodecahedron without one edge shown in Figure~\ref{fig:(3;2,2|5)}.
\end{proof}

	\begin{figure}[H]
		\centering
		\includegraphics[scale=.12]{./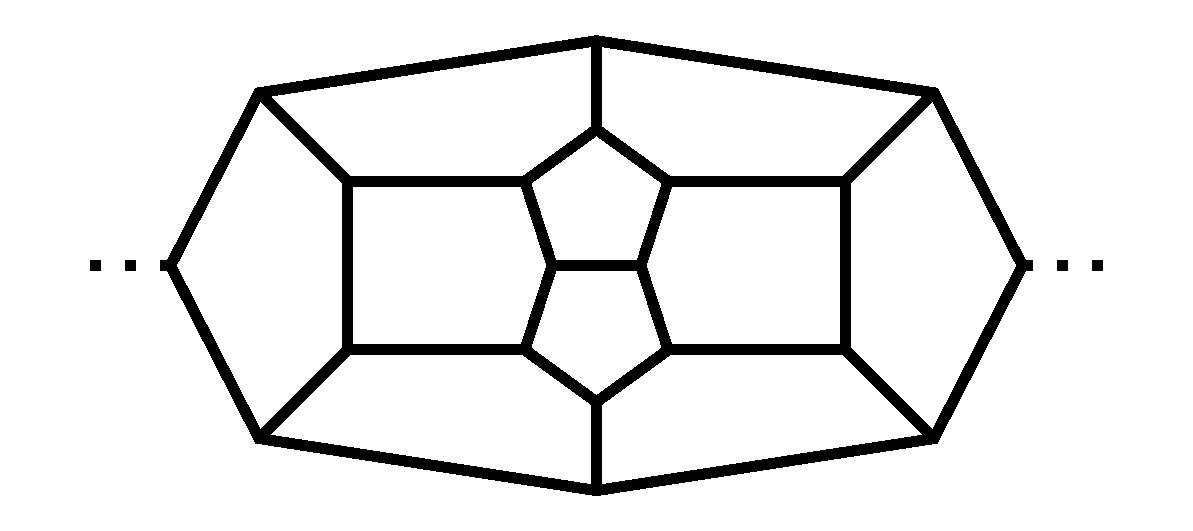}
		\caption{Unique $(3;2,2|5)$-block}\label{fig:(3;2,2|5)}
	\end{figure}

\subsection{Uniqueness of $(4;k_1,k_2|d)$-blocks}\label{subs:(4;k_1,k_2|d)-unique}

Now we discuss existence of $(4;k_1,k_2|d)$-blocks. The only admissible value of $d$ is $d=3$, and the only possibilities are $(4;3,3|3)$-blocks, $(4;3,2|3)$-blocks, and $(4;2,2|3)$-blocks.

\begin{lemma}\label{lem:(4,3;3,3)-uniq}
	The $(4;3,3|3)$-block is unique. 
\end{lemma}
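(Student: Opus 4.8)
The plan is to reduce, via the already-established lemmas, to a single small value of $a$, and then to close up the exceptional face to recover a Platonic solid. Since $k=4$ and $d=3$, Proposition~\ref{prop:b-reduction} lets us assume $a\le d=3$; Lemma~\ref{lem:(k,d)_no_a=d} rules out $a=3$; and Lemma~\ref{lem:(k,d)_no_a=1} (applied with $k=4$, so $k_1=k_2=k-1=3$) rules out $a=1$. Hence the only surviving case is $a=2$, so the boundary of the exceptional face is $x=x_0,x_1,x_2=y,x_3,\dots,x_{b+1}$ with $\deg(x)=\deg(y)=3$ and every internal face a triangle.

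The main step is then to add the edge $xy=x_0x_2$. Because $x_0x_1x_2$ was part of the exceptional-face boundary and $\deg(x_0)=\deg(x_2)=3$ in the block, after adding this edge every vertex has degree $4$, and we have created a new triangular face $x_0,x_1,x_2$. I must first check that the edge $x_0x_2$ is not already present (if it were, $x_0,x_1,x_2$ would already bound a triangle inside the block, contradicting that these three vertices lie on the exceptional face of a $2$-connected block whose only non-triangular face is the outer one — more carefully, one argues that $x_1$ would then be a degree-$2$ vertex or the block would fail $2$-connectivity). Granting that, the resulting graph $G'$ is $4$-regular, planar, $2$-connected, all of whose internal faces are triangles, with outer face the cycle $x_0,x_2,x_3,\dots,x_{b+1}$ of length $b$. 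If $b>3$, then $G'$ is a $2$-connected $1$-nearly Platonic graph of type $(4|3)$, which is forbidden by Theorem~\ref{thm:no-2-conn-1NP}. Therefore $b=3$, so $G'$ is a $4$-regular planar graph all of whose faces are triangles, i.e.\ the octahedron (this is the standard fact that the octahedron is the unique such graph; it follows from Euler's formula, since $4|V|=2|E|$ and $3|F|=2|E|$ force $|V|=6,|E|=12,|F|=8$, together with $3$-connectivity and Steinitz). Consequently the original $(4;3,3|3)$-block is the octahedron with one edge deleted, which is the unique such block, and one exhibits it in a figure.

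The step I expect to be the main obstacle is the bookkeeping needed to guarantee that adding $x_0x_2$ keeps the graph simple and $2$-connected and does not accidentally merge two faces or create a multi-edge — in other words, verifying that $x_0x_2\notin E(B)$ and that no internal structure of $B$ obstructs the embedding of the new triangle in the region formerly occupied by (part of) the exceptional face. This is handled exactly as in the proofs of Proposition~\ref{prop:b-reduction} and Lemma~\ref{lem:(k,d)_no_a=d}: if a cut-vertex appears, one extracts an endblock of type $(4;k_1|3)$ with $2\le k_1\le 3$, contradicting Theorem~\ref{thm:no-endblocks}. After that, the identification of the closed-up graph with the octahedron is routine via Euler's formula, and the uniqueness of the block follows immediately.

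\begin{proof}
	Since $k=4$, the only admissible value is $d=3$. By Proposition~\ref{prop:b-reduction} we may assume $a\le d=3$; by Lemma~\ref{lem:(k,d)_no_a=d} we have $a\ne 3$; and by Lemma~\ref{lem:(k,d)_no_a=1} (with $k_1=k_2=k-1=3$) we have $a\ne 1$. Hence $a=2$, and the boundary of the exceptional (outer) face of the block $B$ is $x=x_0,x_1,x_2=y,x_3,\dots,x_{b+1}$, with $\deg(x_0)=\deg(x_2)=3$, every other vertex of degree $4$, and every internal face a triangle.

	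We claim $x_0x_2\notin E(B)$. Indeed, if $x_0x_2$ were an edge, then since $B$ is $2$-connected the edge $x_0x_2$ would lie on the boundary of two internal faces; one of them, together with the boundary edges $x_0x_1,x_1x_2$, would be bounded by the triangle $x_0,x_1,x_2$, forcing $\deg(x_1)=2$, a contradiction.

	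Now add the edge $x_0x_2$, drawing it inside the region of the exceptional face. This creates a new triangular internal face $x_0,x_1,x_2$, raises $\deg(x_0)$ and $\deg(x_2)$ to $4$, and leaves all other degrees equal to $4$. The new outer face is bounded by the cycle $x_0,x_2,x_3,\dots,x_{b+1}$ of length $b$.

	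If the new graph $G'$ is no longer $2$-connected, then there is a cut-vertex $x_j$ with $j>2$, and the subgraph bounded by $x_0,x_2,x_j,x_{j+1},\dots,x_{b+1}$ is an endblock of type $(4;k_1|3)$ with $2\le k_1\le 3$, which does not exist by Theorem~\ref{thm:no-endblocks}. Hence $G'$ is $2$-connected.

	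If $b>3$, then $G'$ is a $2$-connected $1$-nearly Platonic graph of type $(4|3)$, contradicting Theorem~\ref{thm:no-2-conn-1NP}. Therefore $b=3$, so $G'$ is a $4$-regular planar graph all of whose faces are triangles. Then $4|V(G')|=2|E(G')|$ and $3|F(G')|=2|E(G')|$, which together with Euler's formula give $|V(G')|=6$, $|E(G')|=12$, $|F(G')|=8$; thus $G'$ is the octahedron. Consequently the original block $B$ is the octahedron with one edge removed, which is therefore the unique $(4;3,3|3)$-block.
\end{proof}
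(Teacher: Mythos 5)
Your proof is correct and follows essentially the same route as the paper: reduce to $a=2$ using Proposition~\ref{prop:b-reduction} and Lemmas~\ref{lem:(k,d)_no_a=d} and~\ref{lem:(k,d)_no_a=1}, add the edge $x_0x_2$ to get a $4$-regular $2$-connected graph with all internal faces triangular, invoke Theorem~\ref{thm:no-2-conn-1NP} to force the outer face to be a triangle, and conclude the graph is the octahedron and the block the octahedron minus an edge. Your extra checks (simplicity of $x_0x_2$, preservation of $2$-connectivity, and the Euler-formula identification of the octahedron) are sensible added rigor that the paper leaves implicit; note only that the cut-vertex check is vacuous, since adding an edge to a $2$-connected graph cannot destroy $2$-connectivity.
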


\begin{proof}
	We must consider only $a=2$.  
	
	We again add to $B$ the edge $xy=x_0x_2$ similarly as in the case of $k=3$ and obtain a $4$-regular  $2$-connected graph with all internal faces of size $d=3$. By Theorem~\ref{thm:no-2-conn-1NP}, there is no 2-connected 1-nearly Platonic graph, hence the outer face must be a triangle $x_0,x_2,x_3$ and the new graph is the octahedron. Thus, the original $(4;3,3|3)$-block was the octahedron without one edge shown in Figure~\ref{fig:(4;3,3|3)}.
\end{proof}

\begin{figure}[H]
	\centering
	\includegraphics[scale=.15]{./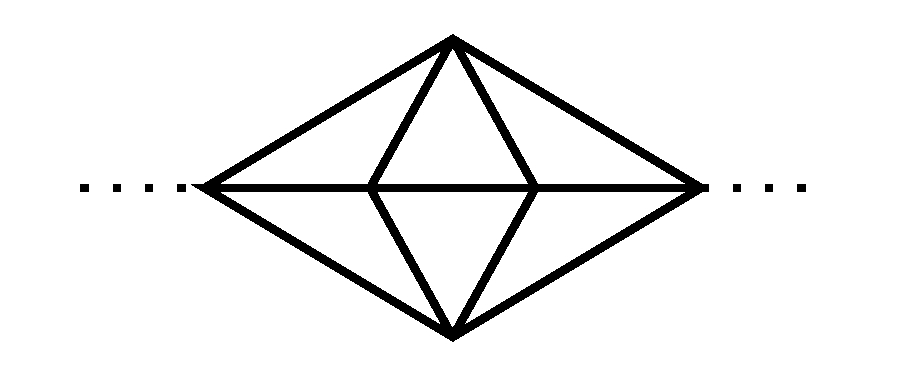}
	\caption{Unique $(4;3,3|3)$-block}\label{fig:(4;3,3|3)}
\end{figure}

The following result is a direct corollary of Lemmas~\ref{lem:(k,d)_no_a=d},~\ref{lem:(k,d)_no_a=d-1}, and~\ref{lem:(k,d)_no_a=1}.

\begin{lemma}\label{lem:(4,3;3,2)-DNE}
	There is no $(4;3,2|3)$-block. 
\end{lemma}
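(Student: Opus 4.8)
The statement to prove is Lemma~\ref{lem:(4,3;3,2)-DNE}: there is no $(4;3,2|3)$-block. The claim is advertised as "a direct corollary" of Lemmas~\ref{lem:(k,d)_no_a=d}, \ref{lem:(k,d)_no_a=d-1}, and~\ref{lem:(k,d)_no_a=1}, so the plan is essentially a bookkeeping argument that shows the three already-established exclusions exhaust all admissible values of $a$.

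First I would fix notation: here $k=4$, $d=3$, $k_1=k-1=3$, and $k_2=2$, and the exceptional (outer) face has boundary of length $h=a+b$ with $1\le a\le b$. By Proposition~\ref{prop:b-reduction} (applied with $k=4$, $k_2=2$, $d=3$), the existence of a $(4;3,2|3,\langle a,b\rangle)$-block with $a>d=3$ would imply the existence of one with smaller first parameter $a-3$; iterating, it suffices to rule out all blocks with $1\le a\le 3$. So the proof reduces to checking the three cases $a=1$, $a=2$, $a=3$ individually.

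Next I would dispatch each case by citing the appropriate lemma. For $a=3=d$: Lemma~\ref{lem:(k,d)_no_a=d} says there is no $(k;k-1,k_2|d)$-block with $a=d$ for any admissible $k$, hence none with $k=4$, $k_2=2$. For $a=2=d-1$: since $k_2=2<k-1=3$, Lemma~\ref{lem:(k,d)_no_a=d-1} (no $(k;k-1,k_2|d)$-block with $a=d-1$ and $k_2<k-1$) applies and excludes this case. For $a=1$: I would invoke Lemma~\ref{lem:(k,d)_no_a=1}; however I should note a subtlety — that lemma as stated is about $(k;k-1,k-1|d)$-blocks, i.e. $k_2=k-1$, whereas here $k_2=2$. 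With $k=4$ we have $k-1=3\ne 2$, so Lemma~\ref{lem:(k,d)_no_a=1} does not literally apply. Instead, for $a=1$ with $k_2=2<k-1$ I would use Lemma~\ref{lem:(k,d)_no_a=d-1} again if $d-1=1$ — but $d-1=2$, so that fails too. The clean resolution: for $a=1$ and $k_2=2$, note $a=1<d-1=2$, so add the edge $xy$; this creates a new internal face of length $a+1=2$, which is impossible in a simple graph (a face of length two would be a multi-edge), OR more carefully, adding $xy$ where $\deg(x)=3,\deg(y)=2$ makes $\deg(x)=4=k$ and $\deg(y)=3=k_2+1$, producing a $(k;k_2+1|d)=(4;3|3)$-endblock, contradicting Theorem~\ref{thm:no-endblocks} — except one must check $xy$ is not already an edge and that no degree-two face is created; if $x$ and $y$ are adjacent or at distance giving a 2-face, a short separate argument using the 2-connectivity and $\delta\ge 3$ of any putative completed graph handles it.

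The main obstacle, then, is precisely the $a=1$ case: the cited Lemma~\ref{lem:(k,d)_no_a=1} does not cover $k_2=2$, so either the paper intends a slightly different citation or a one-line ad hoc argument is needed there. Everything else is immediate from the three reduction/exclusion lemmas together with Proposition~\ref{prop:b-reduction}. I would therefore write the proof as: reduce to $1\le a\le 3$ via Proposition~\ref{prop:b-reduction}; exclude $a=3$ by Lemma~\ref{lem:(k,d)_no_a=d}; exclude $a=2$ by Lemma~\ref{lem:(k,d)_no_a=d-1} (using $k_2=2<3=k-1$); and exclude $a=1$ by adding edge $xy$ to obtain a $(4;3|3)$-endblock excluded by Theorem~\ref{thm:no-endblocks}, after a quick check that this does not create a digon (which follows since $x,y$ lie on the unique exceptional face and adjacency of $x,y$ with $a=1$ would already make the "face" between them degenerate, contradicting that facial boundaries of 2-connected plane graphs are cycles, Theorem~\ref{thm:2-conn-edges-on-bound}).
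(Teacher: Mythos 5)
Your reduction to $1\le a\le 3$ via Proposition~\ref{prop:b-reduction} and your treatment of $a=3$ (Lemma~\ref{lem:(k,d)_no_a=d}) and $a=2$ (Lemma~\ref{lem:(k,d)_no_a=d-1}, using $k_2=2<3=k-1$) coincide with the paper's proof, and you are right that the paper's prefatory claim that the lemma is a ``direct corollary'' of the three cited lemmas is inaccurate, since Lemma~\ref{lem:(k,d)_no_a=1} only covers $k_2=k-1$. However, your handling of the remaining case $a=1$ has a genuine gap. By the block convention, the boundary segment of length $a=1$ from $x=x_0$ to $y=x_a=x_1$ \emph{is} the edge $xy$; so in every $(4;3,2|3,\langle 1,b\rangle)$-block the edge $xy$ is already present, lying on the outer face on one side and on an internal triangle on the other. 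Your ``quick check'' asserting that adjacency of $x$ and $y$ with $a=1$ would force a degenerate face and hence cannot occur is therefore false, and ``adding the edge $xy$'' would create a digon in every instance of the case you are trying to exclude. The endblock you hoped to produce never materializes, so $a=1$ is not ruled out by your argument.

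The paper disposes of $a=1$ differently: it takes two copies $B$ and $B'$ of the putative block, amalgamates the degree-two vertices $y$ and $y'$ into a single vertex (now of degree four), and adds the edge $xx'$, which raises $x$ and $x'$ to degree four and closes a new internal triangle $x,y,x'$. The result is a $2$-connected $4$-regular planar graph whose faces are all triangles except the outer face of size $2b\ge 4$, i.e.\ a $2$-connected $1$-nearly Platonic graph of type $(4|3)$, contradicting Theorem~\ref{thm:no-2-conn-1NP}. Some doubling or gluing device of this kind (compare the three-copy construction in Lemma~\ref{lem:(4,3;2,2)-uniq}) is what is needed to finish your $a=1$ case; as written, your proof does not close it.
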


\begin{proof}
	First assume such a block $B$ exists for $a=1$. Then we take two copies of $B$, say $B$ and $B'$ and amalgamate vertices $y$ and $y'$, obtaining another vertex of degree two and add edge $xx'$. But then we have constructed a 2-connected 1-nearly Platonic graph of type $(4|3)$, which does not exist by Theorem~\ref{thm:no-2-conn-1NP}.
	
	We cannot have  $a=2$ by Lemma~\ref{lem:(k,d)_no_a=d-1},  or  $a=3$ by Lemma~\ref{lem:(k,d)_no_a=d}. Hence, the proof is complete.
\end{proof}

The only remaining case for 4-regular blocks is more complex.

\begin{lemma}\label{lem:(4,3;2,2)-uniq}
	The $(4;2,2|3)$-block is unique. 
\end{lemma}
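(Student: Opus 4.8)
The plan is to pin down the block by a direct structural analysis driven by Euler's formula, rather than by the ``closing-up'' reductions used for the other block types (those lemmas exploit $k_1=k-1$, which fails here). Let $B$ be a $(4;2,2|3)$-block of order $n$. The degree sum is $4(n-2)+2\cdot2=4n-4$, so $|E|=2n-2$ and, by Euler, $|F|=n$; summing face lengths, $3(n-1)+h=2|E|$, forces the exceptional (outer) face to have length $h=n-1$. By Theorem~\ref{thm:2-conn-edges-on-bound} this face is bounded by a cycle $C$, which then passes through $n-1$ distinct vertices, so exactly one vertex $v$ lies off $C$. Since $x,y\in V(C)$, $v$ has degree $4$, and its four neighbours are distinct vertices of $C$ other than $x,y$ (whose only neighbours are their two $C$-neighbours); hence $n\ge7$.

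Next I would analyse $G':=B-v$. It contains the spanning cycle $C$, so it is $2$-connected, and $C$ still bounds its outer face, so $G'$ is a $2$-connected outerplane graph. Deleting $v$ merges the four triangles of $B$ around $v$ into a single quadrilateral face $Q=a_1a_2a_3a_4$ (the $a_i$ being $v$'s neighbours in cyclic order, which form a $4$-cycle of $G'$ by Theorem~\ref{thm:2-conn-edges-on-bound}), while the remaining $n-5$ inner faces of $B$ survive as triangles. Thus $G'$ is a triangulated $(n-1)$-gon with exactly one quadrilateral face, so its weak dual $T$ on the $n-4$ inner faces is a tree. A leaf of $T$ meets the rest of $G'$ in one edge; a triangular leaf therefore has an \emph{apex} vertex of degree $2$ in $G'$, hence of degree $2$ in $B$, hence equal to $x$ or $y$; and $Q$ cannot be a leaf, since then two of its corners would be such apices, contradicting that the $a_i$ have degree $4$ in $B$. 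So $T$ has at most two leaves and is a path $F_1-\dots-F_{n-4}$ whose ends $F_1,F_{n-4}$ are triangular ears with apices $x$ and $y$, and $Q=F_j$ is an internal node.

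Now I would push degrees at the corners of $Q$. Each $a_i$ has degree $4$ in $B$ and is adjacent to $v$, hence has degree exactly $3$ in $G'$, i.e.\ lies on exactly two inner faces of $G'$. First, the two edges of $Q$ shared with $F_{j-1}$ and $F_{j+1}$ must be opposite edges of $Q$: if $a_ia_{i+1}$ and $a_{i+1}a_{i+2}$ were both shared, then $a_{i+1}$ would lie on $Q$ and on the two distinct neighbouring faces, i.e.\ on at least three inner faces, impossible. Say $F_{j-1}$ shares $a_1a_2$ and $F_{j+1}$ shares $a_3a_4$, with $a_2a_3,a_4a_1$ on $C$. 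Then $a_1$ already lies on $Q$ and $F_{j-1}$ and on no further inner face; so if $F_{j-1}$ had another dual-neighbour it would have to share with $F_{j-1}$ the edge of the triangle $F_{j-1}$ incident to $a_2$ (not the one through $a_1$), putting $a_2$ on three inner faces --- impossible. Hence $F_{j-1}$ is a leaf, so $j=2$; symmetrically $F_{j+1}$ is a leaf, so $j+1=n-4$. Therefore $n=7$, $T$ is the path $F_1-Q-F_3$, and $C$ is a $6$-cycle. Tracing the two ears, $x$ is $C$-adjacent to $a_1,a_2$ and $y$ to $a_3,a_4$, so $C=(x,a_1,a_4,y,a_3,a_2)$, in particular $a=b=3$; re-inserting $v$ inside $Q$ adjacent to $a_1,a_2,a_3,a_4$ then gives a unique graph, namely the octahedron with one vertex split into the two degree-$2$ vertices $x$ and $y$.

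The main obstacle is making the tree-to-path reduction airtight --- this hinges on the two observations that $Q$ is never a leaf and that leaf-apices must be $x$ or $y$ --- together with the case distinction ``shared edges of $Q$ opposite vs.\ adjacent''; once those are settled, the single constraint $\deg_{G'}(a_i)=3$ does all the remaining work and collapses everything to $n=7$ with no freedom left. It is worth double-checking that no smaller or differently shaped $G'$ slips through (e.g.\ by directly verifying the cases $n=7,8$), which is quick since $n\ge7$ and the structure is so rigid.
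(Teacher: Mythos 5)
Your proof is correct, and it takes a genuinely different route from the paper. The paper proves this lemma by a case analysis on the parameter $a$, using surgeries that reduce each case to previously established non-existence results: $a=1$ is killed by amalgamating three copies and invoking Theorem~\ref{thm:no-2-conn-1NP}, $a=2$ by adding the edge $xy$ and invoking Lemma~\ref{lem:(k,d)_no_a=1}, $a=3$ with $b>3$ by vertex amalgamation, and $a>3$ by a fairly delicate iterated edge-replacement argument that falls back on Lemma~\ref{lem:(4,3;3,3)-uniq}; only the subcase $a=b=3$ is resolved by direct construction. You instead observe that the counting is extremely rigid for this particular block: Euler's formula forces the exceptional face to have length $n-1$, so exactly one vertex $v$ lies off its boundary cycle, and $B-v$ is a $2$-connected outerplane graph with one quadrilateral and otherwise triangular inner faces. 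The weak-dual tree, together with the single constraint that each neighbour of $v$ lies on exactly two inner faces of $B-v$ (degree $3$ there) while only $x$ and $y$ can be ears, collapses everything to $n=7$ and the octahedron-with-a-split-vertex, with $a=b=3$ falling out rather than being assumed. What your approach buys is self-containment and transparency: it needs only Theorem~\ref{thm:2-conn-edges-on-bound} and elementary facts about outerplane graphs, not the endblock and $1$-nearly-Platonic machinery, and it avoids the paper's most intricate case ($a>3$) entirely. What it gives up is generality: it hinges on both exceptional vertices having degree $2$, so the outer face swallows all but one vertex; it would not adapt to, say, the $(5;3,2|3)$-block. Two standard facts you lean on silently deserve a one-line justification in a final write-up: that a vertex of degree $d$ in a $2$-connected plane graph lies on exactly $d$ faces (so a boundary vertex of degree $3$ in $G'$ lies on exactly two inner faces), and that the weak dual of a $2$-connected outerplane graph is a tree (here immediate from acyclicity plus the count of $n-4$ inner faces versus $n-5$ chords).
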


\begin{proof}
	Let such a block be called $B$. If $a=1$, we create three copies $B^0,B^1,B^2$ of $B$ with the vertices of degree two denoted $x^i$ and $y^i$ in each copy $B^i$. Assume that $B$ has $t$ internal triangular faces and observe that $b>1$ . Then we amalgamate $x^i$ with $y^{i+1}$ for all $i=0,1,2$, where the superscripts are calculated modulo 3. This way we obtain a 2-connected 4-regular graph with $3t+1$ inner triangular faces and the outer face of size $3b\geq6$. Because no such graph exists by Theorem~\ref{thm:no-2-conn-1NP}, this case is impossible.
		
	When $a=2$, then we add the edge $xy=x_0x_2$ and obtain a $(4;3,3|3)$-block with the vertices of degree three joined by an edge, which cannot exist by Lemma~\ref{lem:(k,d)_no_a=1}. Hence, $a\neq 2$.
	
	For $a=3$ and $b=3$, the boundary is the 6-cycle $x_0,x_1,\dots,x_5$ with $\deg(x_0)=\deg(x_3)=2$. Therefore, inside the 4-cycle $x_1,x_2,x_4,x_5$ with edges $x_1x_2,x_2x_4,x_4x_5,x_5x_1$ there must be a vertex $v$, adjacent to all vertices of the cycle.  This gives the  unique $(4;2,2|3)$-block shown in Figure~\ref{fig:(4;2,2|3)}. Notice that amalgamating $x_0$ with $x_3$ produces an octahedron.

	\begin{figure}[H]
		\centering
		\includegraphics[scale=.17]{./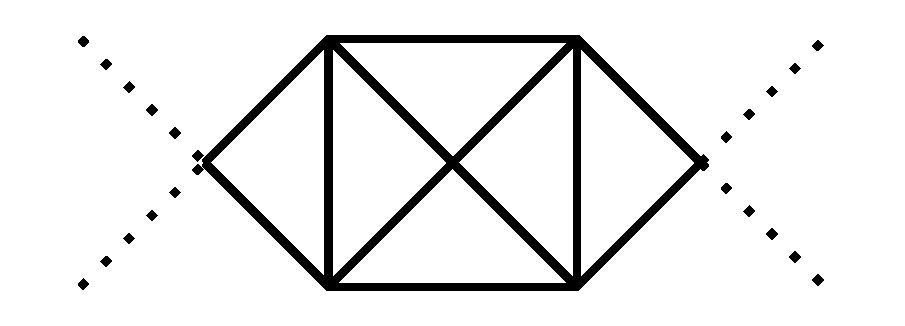}
		\caption{Unique $(4;2,2|3)$-block}\label{fig:(4;2,2|3)}
	\end{figure}

	Now we need to show that when $a=3$, we cannot have $b>3$. Suppose we can. But then by amalgamating $x_0$ with $x_3$ as above into a vertex $x'$ we obtain one new inner triangular face $x',x_1,x_2$ and an outer face $x_0,x', x_3,\dots, x_{3+b-1},x_0$ of size $b+1$. Because $b>3$, the outer face is of size at least four, and we have a 2-connected 1-nearly Platonic graph of type $(4|3)$. No such graph exists by Theorem~\ref{thm:no-2-conn-1NP}, which implies $b=3$, contradicting our assumption that $b>3$.

	Finally, let $a>3$. 
	Let $a=3c+r$ for some $c>1$ and $0\leq r\leq2$. Remove edges $x_2x_3,\dots,x_{3c-1}x_{3c}$ and replace them by edges $x_0x_2,\dots,x_{3c-1},x_{3c-3}$.

	Let $i$ be the smallest subscript such that the edge $x_{3i-1}x_{3i}$ belonged to a triangle $x_{3i-1},x_{3i},x_j$ for some $j>a$ and the previous edges (if any) $x_{3s-1}x_{3s}$ belonged to triangles $x_{3s-1},x_{3s},z_{3s}$, where $z_{3s}$ is not a boundary vertex. Then the graph bounded by the cycle $x_0,x_2,z_3,x_3,x_5,\dots,x_{3i-1},x_j,x_{j+1},\dots,x_{a+b-1}$ has $x_0$ of degree 3 and $x_j$ of degree 2 or 3. However, if $\deg(x_j)=2$, no such graph can exist by Proposition~\ref{prop:b-reduction} and Lemma~\ref{lem:(k,d)_no_a=d}. 
	
	If $\deg(x_j)=3$, the graph $B^*$ bounded by
		 $x_0,x_2,z_3,x_3,x_5,\dots,x_{3i-1},x_j$, 
		 $x_{j+1},\dots,x_{a+b-1}$ 
	would be a $(4;3,3|3)$-block. However, such a block is unique with $a=2$, while here we have $a^*\geq3$, because of the path 
			$x_0,x_2,z_3,x_3,x_5,\dots$,
			$x_{3i-1},x_j$, 
	where $i\geq1$. Therefore, this possibility can be ruled out as well.

	Thus, no edge $x_{3s-1}x_{3s}$ belongs to a triangle $x_{3i-1},x_{3i},x_j$. 
	
	Now if $a=3c$, after performing the edge operations above, we are left with $x_{a}$ having degree one and its only remaining neighbor is $x_{a+1}$. Removing $x_a$ we obtain again a $(4;3,3|3)$-block as in the previous paragraph, and the same contradiction.

	When $a=3c+1$, we end up with $\deg(x_0)=\deg(x_{a-1})=3$ and $\deg(x_{a})=2$. We create two copies of the block, say $B$ and $B'$, amalgamate $x_{a}$ with $x'_{a}$ and add a new edge $x_{a-1} x'_{a-1}$. This way we obtain a $(4;3,3|3,\langle 2a-1,2b\rangle)$-block. Since $2b>2a-1\geq 7$, no such block can exist by Lemma~\ref{lem:(4,3;3,3)-uniq}.
	
	Finally, for $a=3c+2$ we transform the graph so that  $\deg(x_0)=	\deg(x_{a-2})=3$ and $\deg(x_{a})=2$.  By adding the edge $x_{a-2} x_{a}$ we create a new inner triangle $x_{a-2},x_{a-1}, x_{a}$ and $\deg(x_{a-2})=4$ and $\deg(x_{a})=3$. The resulting graph is a $(4;3,3|3,\langle a-1,b\rangle)$-block.. But $b>a-1\geq 4$, and no such block can again exist by Lemma~\ref{lem:(4,3;3,3)-uniq}.
\end{proof}

\subsection{Uniqueness of $(5;k_1,k_2|d)$-blocks}\label{subs:(5;k_1,k_2|d)-unique}

Again, we must have $d=3$. Hence, we are left with blocks of type $(5;k_1,k_2|3)$ for $4\geq k_1\geq k_2\geq2$.

\begin{lemma}\label{lem:(5,3;4,4)-uniq}
	The $(5;4,4|3)$-block is unique. 
\end{lemma}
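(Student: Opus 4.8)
The statement to prove is that the $(5;4,4|3)$-block is unique. Following the template used for the analogous lemmas above (Lemma~\ref{lem:(4,3;3,3)-uniq}, Lemma~\ref{lem:(3,d;2,2)-uniq}), I expect the proof to reduce to a very small range of values for $a$ and then close the surviving case by an amalgamation-and-completion argument invoking Theorem~\ref{thm:no-2-conn-1NP}. First I would collect the restrictions already available: since $k_1=k_2=4=k-1$, Proposition~\ref{prop:b-reduction} (or Lemma~\ref{lem:(5,3;3,d_2)-reduction} applied with the relevant degrees, but here $k_1=k-1$, so Proposition~\ref{prop:b-reduction} is the right tool) lets us assume $1\le a\le d=3$; Lemma~\ref{lem:(k,d)_no_a=d} kills $a=3$; and Lemma~\ref{lem:(k,d)_no_a=1} — which is stated for $(k;k-1,k-1|d)$-blocks — kills $a=1$. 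Hence the only case to treat is $a=b$? No: the only case to treat is $a=2$, with $b\ge 2$ arbitrary.

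So assume $a=2$, i.e. the exceptional boundary is the path $x_0,x_1,x_2=y,x_3,\dots,x_{1+b}$ with $\deg(x_0)=\deg(x_2)=4$ and all internal vertices of degree $5$, all internal faces triangles. The natural move, exactly as in Lemma~\ref{lem:(4,3;3,3)-uniq}, is to add the edge $x_0x_2$. This creates one new internal triangular face $x_0,x_1,x_2$, raises both $\deg(x_0)$ and $\deg(x_2)$ to $5$, and produces a $5$-regular $2$-connected plane graph all of whose internal faces are triangles, with outer face the cycle $x_0,x_2,x_3,\dots,x_{1+b}$ of length $b+1$. If $b+1>3$ this graph is a $2$-connected $1$-nearly Platonic graph of type $(5|3)$, which cannot exist by Theorem~\ref{thm:no-2-conn-1NP}; therefore $b=2$, the outer face is the triangle $x_0,x_2,x_3$, and the completed graph is a $5$-regular $2$-connected triangulation of the sphere — the icosahedron. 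Consequently the original $(5;4,4|3)$-block is the icosahedron minus one edge, which is unique; I would finish with the corresponding figure, as the surrounding lemmas do.

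\textbf{Main obstacle.} Two small points need care. First, one must verify that adding $x_0x_2$ keeps the graph $2$-connected and planar and does not accidentally duplicate an already-present edge (i.e.\ that $x_0$ and $x_2$ are not already adjacent inside $B$); if they were already adjacent, then $x_0,x_1,x_2$ would bound a face or separate the block, and the $2$-connectedness of $B$ together with $\deg(x_1)=5$ rules this out — this is the one spot where a short argument, rather than a citation, is required. Second, one must justify that a $5$-regular $2$-connected plane triangulation is necessarily the icosahedron: this follows from the classical Platonic enumeration (Euler's formula forces $|V|=12$, $|E|=30$, $|F|=20$, and $3$-connectedness plus Steinitz gives uniqueness of the combinatorial type), so I would cite it in one line rather than reprove it. I expect the bookkeeping of which earlier lemma eliminates $a=1$ and $a=3$ to be the only place where the write-up could slip, since Lemma~\ref{lem:(k,d)_no_a=1} is phrased for $(k;k-1,k-1|d)$-blocks and applies here verbatim, while $a=3=d$ is handled by Lemma~\ref{lem:(k,d)_no_a=d}.
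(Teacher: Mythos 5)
Your proposal is correct and follows essentially the same route as the paper: reduce to $1\le a\le 3$ via Proposition~\ref{prop:b-reduction}, eliminate $a=3$ and $a=1$ by Lemmas~\ref{lem:(k,d)_no_a=d} and~\ref{lem:(k,d)_no_a=1}, then for $a=2$ add the edge $x_0x_2$ and invoke Theorem~\ref{thm:no-2-conn-1NP} to force the outer face to be a triangle, yielding the icosahedron minus an edge. Your extra remarks (checking $x_0x_2$ is not already present, and that a $5$-regular plane triangulation is the icosahedron) are reasonable refinements that the paper leaves implicit.
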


\begin{proof}
	It follows from Lemma~\ref{lem:(k,d)_no_a=1} that we only have to consider $a=2$.  
	
	By adding the edge $x_0x_2=xy$, we obtain a 5-regular graph with all internal faces of size three. By Theorem~\ref{thm:no-2-conn-1NP}, the outer face now must be also a triangle, as otherwise we would have a 2-connected 1-nearly Platonic graph with the outer face of size more than three. Therefore, the new graph is the icosahedron and the original one was the icosahedron without an edge  shown in Figure~\ref{fig:(5;4,4|3)}.
\end{proof}

\begin{figure}[H]
\centering
\includegraphics[scale=.14]{./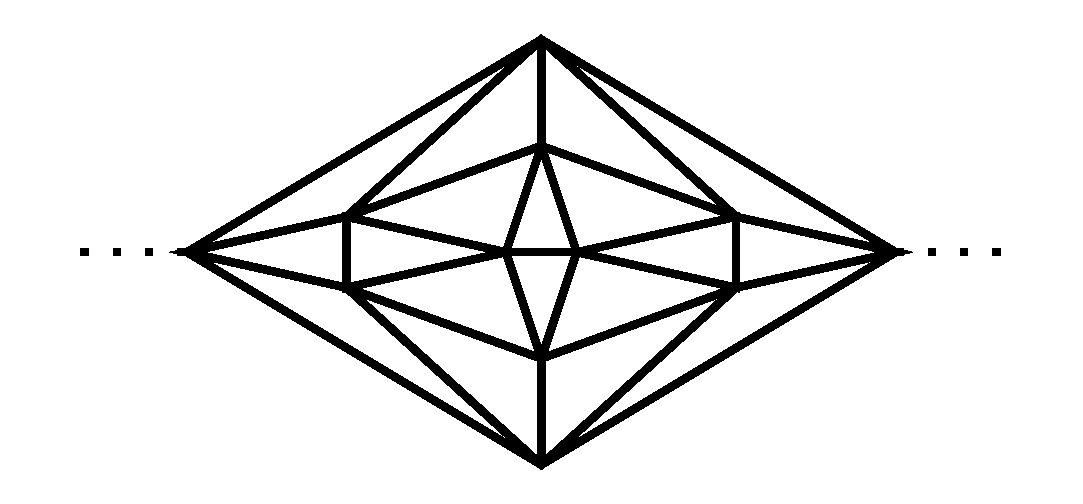}
\caption{Unique $(5;4,4|3)$-block}\label{fig:(5;4,4|3)}
\end{figure}

\begin{lemma}\label{lem:(5,3;4,3)-DNE}
	A $(5;4,3|3)$-block  does not exist.
\end{lemma}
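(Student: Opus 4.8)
The plan is to mimic the strategy already used for Lemma~\ref{lem:(4,3;3,2)-DNE} and Lemma~\ref{lem:(5,3;4,3)}'s cousins: reduce to small values of $a$ using the reduction lemmas already proved, then kill each remaining small case by an explicit surgery that either produces a nonexistent endblock or a nonexistent $2$-connected $1$-nearly Platonic graph. Concretely, for a $(5;4,3|3)$-block $B$ we have $k=5$, $k_1=k-1=4$, $k_2=3$, and $d=3$. By Proposition~\ref{prop:b-reduction} (or by Lemma~\ref{lem:(5,3;3,d_2)-reduction}'s analogue applied with $k_1=k-1$) we may assume $1\le a\le d=3$. The case $a=d=3$ is excluded by Lemma~\ref{lem:(k,d)_no_a=d}. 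The case $a=d-1=2$ with $k_2=3<k-1=4$ is excluded by Lemma~\ref{lem:(k,d)_no_a=d-1}. So the only surviving case is $a=1$, and the whole proof reduces to disposing of that single configuration.

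For $a=1$ we have the exceptional face bounded by the path $x=x_0,x_1=y,x_2,\dots,x_b$ with $\deg(x_0)=4$ and $\deg(x_1)=3$, and $b\ge a=1$ (in fact $b>1$, since $b=a=1$ would force a multi-edge). The natural move, echoing the $d=3$ surgeries elsewhere in the paper, is to attach a new vertex or to glue copies of $B$. I would first try: add a new vertex $z$ and edges $x_0z$, $x_1z$; this creates a new triangular internal face $x_0,x_1,z$, makes $\deg(x_0)=5$, $\deg(x_1)=4$, and leaves $z$ of degree $2$. The result is then a $(5;2|3)$-endblock (the new exceptional face being bounded by $z,x_1,x_2,\dots,x_b$ or $z,x_0,x_b,\dots$ depending on orientation), which does not exist by Theorem~\ref{thm:no-endblocks} — provided the graph stays $2$-connected, which it does since we only added a vertex of degree $2$ joined to two vertices already on a common face. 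If an orientation subtlety makes this attach the new degree-$2$ vertex to the wrong pair, the fallback is to instead take two copies $B,B'$, amalgamate the degree-$3$ vertices $x_1$ and $x_1'$ into a single vertex of degree $4$, and add edge $x_0x_0'$; this yields a $2$-connected $5$-regular graph all of whose internal faces are triangles except a large outer face of size $2b\ge 4$, i.e. a $2$-connected $1$-nearly Platonic graph of type $(5|3)$, contradicting Theorem~\ref{thm:no-2-conn-1NP}.

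The step I expect to require the most care is verifying that the surgery in the $a=1$ case genuinely lands in a class already ruled out — specifically, checking that no multi-edge or loop is created (one must confirm $x_0$ and $x_1$ are not already adjacent via a second edge, and that $b>1$ so the outer face is a genuine face of size $>3$), and that $2$-connectivity is preserved (or, if it is lost, locating the cut vertex and extracting an endblock of admissible type exactly as in Proposition~\ref{prop:b-reduction} and Lemma~\ref{lem:(k,d)_no_a=d}). The degree bookkeeping is otherwise routine: one only needs $\deg(x_1)=3$ so that after one incident edge is added it becomes $4<5$, keeping it the unique sub-$k$ vertex if we go the endblock route, or $\deg(x_1)+\deg(x_1')-?$ arithmetic if we amalgamate. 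Once the $a=1$ case is closed, combining it with the exclusions of $a=2$ and $a=3$ above completes the proof that no $(5;4,3|3)$-block exists.
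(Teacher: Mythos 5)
Your reduction to $a=1$ is exactly the paper's: Proposition~\ref{prop:b-reduction} gives $1\leq a\leq 3$, Lemma~\ref{lem:(k,d)_no_a=d} excludes $a=3$, and Lemma~\ref{lem:(k,d)_no_a=d-1} excludes $a=2$ because $k_2=3<4=k-1$. The problem is that both of your surgeries for the surviving case $a=1$ fail. Your primary move (add a new vertex $z$ with edges $zx_0$ and $zx_1$) does not produce a $(5;2|3)$-endblock: after the surgery $\deg(x_0)=5$ but $\deg(x_1)=3+1=4<5$, so the exceptional face of the new graph carries \emph{two} deficient vertices, $x_1$ of degree $4$ and $z$ of degree $2$, whereas an endblock by Definition~\ref{def:endblock} has exactly one. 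What you actually obtain is a $(5;4,2|3,\langle 1,b+1\rangle)$-block, and invoking Lemma~\ref{lem:(5,3;4,2)-DNE} to kill it would be circular, since that lemma is proved precisely by deleting the degree-$2$ vertex and appealing to the statement you are trying to prove; indeed, deleting $z$ from your construction returns the original block $B$, so no logical progress is made. Your fallback fails on arithmetic: amalgamating the degree-$3$ vertices $x_1$ and $x_1'$ yields a vertex of degree $3+3=6>5$, not $4$, so the amalgam is not $5$-regular and Theorem~\ref{thm:no-2-conn-1NP} does not apply.

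The surgery that closes the $a=1$ case (and is the paper's) is a symmetrized version of your first idea: take two copies $B,B'$ and add a single new vertex $z$ together with the five edges $zx_0$, $zx_1$, $zx_0'$, $zx_1'$, and $x_1x_1'$. Then $x_0,x_0'$ rise from degree $4$ to $5$, $x_1,x_1'$ rise from degree $3$ to $5$ (each gains the edge to $z$ and one more edge), the three new internal faces $x_0x_1z$, $x_0'x_1'z$, and $x_1x_1'z$ are triangles, and $z$ is the unique vertex of degree less than $5$, having degree $4$. The result is a $2$-connected $(5;4|3)$-endblock, which contradicts Theorem~\ref{thm:no-endblocks}. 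Your instinct to watch $2$-connectivity and multi-edges was the right one, but the decisive issue here is degree bookkeeping: a single pendant triangle cannot saturate both $x_0$ and $x_1$ simultaneously, so one must spend two copies of $B$ to absorb both deficiencies into one new vertex.
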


\begin{proof}
	We have $1\leq a\leq 3$ by Proposition~\ref{prop:b-reduction}. But we cannot have $a=3$ by Lemma~\ref{lem:(k,d)_no_a=d}, or $a=2$ by Lemma~\ref{lem:(k,d)_no_a=d-1}. Hence, $a=1$. 
	
	We again create two copies of $B$ with $t$ inner triangular faces and assume $\deg(x)=\deg(x')=4$ and  $\deg(y)=\deg(y')=3$. Now we add a new vertex $z$ and edges $zx,zx',zy,zy',yy'$. This way we obtain a 2-connected graph with $2t+3$ inner triangular faces in which $\deg(z)=4$ and all other vertices are of degree five. This graph would be a $(5;4|3)$-endblock, which does not exist by Theorem~\ref{thm:no-endblocks}.
\end{proof}

\begin{lemma}\label{lem:(5,3;4,2)-DNE}
		A $(5;4,2|3)$-block  does not exist.
\end{lemma}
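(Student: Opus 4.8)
The strategy is to mirror the argument just used for the $(5;4,3|3)$-block, combining the bounds on $a$ established earlier with a small gadget construction that forces the nonexistence of the block through Theorem~\ref{thm:no-endblocks} or Theorem~\ref{thm:no-2-conn-1NP}. First I would pin down the admissible values of $a$. Since $k_1=4=k-1$, Proposition~\ref{prop:b-reduction} applies and lets us assume $1\leq a\leq d=3$. Lemma~\ref{lem:(k,d)_no_a=d} rules out $a=3$, and Lemma~\ref{lem:(k,d)_no_a=d-1} (with $k_2=2<4=k-1$) rules out $a=2$. So the only surviving case is $a=1$, meaning $x$ and $y$ are adjacent on the boundary of the exceptional face, with $\deg(x)=4$, $\deg(y)=2$, all other vertices of degree $5$, and $b\geq 1$.

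Now I would attack $a=1$ by an amalgamation/augmentation argument. The degree-$2$ vertex $y$ needs one more unit of degree, and the degree-$4$ vertex $x$ needs one more unit, so the natural move is to glue two copies of $B$, say $B$ and $B'$ with corresponding vertices $x,x',y,y'$, and add edges to absorb the deficiencies while creating only triangular internal faces. One attempt: amalgamate $y$ with $y'$ into a single vertex of degree $4$ (since the edge $x_0x_1=xy$ of $B$ and its copy in $B'$ now both emanate from the merged vertex), then add the edge $xx'$. But we must be careful about whether this creates a triangle; if $y$'s two neighbors in $B$ are $x$ and some $w$, the merged vertex has neighbors $x, w, x', w'$, and the new edge $xx'$ together with the merged vertex forms a triangular face only if $x$ and $x'$ are suitably placed. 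More robustly, I would introduce an auxiliary vertex $z$ and add edges $zx, zx', zy, zy', yy'$ exactly as in the proof of Lemma~\ref{lem:(5,3;4,3)-DNE}: this raises $y$ and $y'$ to degree $3$ — wait, we need them to reach degree $5$. Here is the key difference from the $(5;4,3|3)$ case: $y,y'$ start at degree $2$, not $3$, so a single new vertex $z$ of degree $4$ attached via $zy,zy'$ and one edge $yy'$ only brings $y,y'$ to degree $4$. So I would instead attach $z$ with $\deg(z)=5$ via $zx,zx',zy,zy'$ and one further edge, or use two copies plus two auxiliary vertices; the cleanest version is likely: take copies $B,B'$, add vertices $z_1,z_2$ and edges $z_1z_2, z_1x, z_1y, z_2x', z_2y', $ together with $xx'$ or $yy'$ as needed, arranging all new faces to be triangles. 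The resulting graph is $5$-regular, $2$-connected, with all internal faces triangular and an outer face of size $2b + (\text{small constant}) \geq 4$, hence a $2$-connected $1$-nearly Platonic graph of type $(5|3)$, contradicting Theorem~\ref{thm:no-2-conn-1NP}. Alternatively, if the gadget leaves one vertex of degree $4$, the graph is a $(5;4|3)$-endblock, excluded by Theorem~\ref{thm:no-endblocks}.

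The main obstacle I anticipate is the bookkeeping in the $a=1$ gadget: one must choose the auxiliary vertices and connecting edges so that (i) every newly created internal face is a triangle, (ii) the degree of $y$ (and its copy) is brought up from $2$ to exactly $5$ while $x$ goes from $4$ to exactly $5$, and no vertex is over-saturated, (iii) the construction stays planar, and (iv) the outer face has size $\geq 4$ so that the resulting object genuinely contradicts Theorem~\ref{thm:no-2-conn-1NP} (or is a legitimate endblock contradicting Theorem~\ref{thm:no-endblocks}). Because $\deg(y)=2$ forces two fresh edges at $y$, and each such edge must lie on two triangular faces, some care is needed to verify that the triangles close up correctly — this is the only place where a genuinely new small figure must be drawn and checked, and where an off-by-one in degrees could invalidate the argument. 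Once a valid gadget is exhibited, the contradiction is immediate, and the case $a=1$ — the last case — is dispatched, completing the proof.
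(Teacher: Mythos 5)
Your reduction to $a=1$ is exactly the paper's: Proposition~\ref{prop:b-reduction} gives $1\leq a\leq 3$, Lemma~\ref{lem:(k,d)_no_a=d} kills $a=3$, and Lemma~\ref{lem:(k,d)_no_a=d-1} kills $a=2$. The problem is the $a=1$ case, where you never actually produce a working construction, and the one candidate gadget you do write down fails a degree count. With two copies $B,B'$ the total degree deficiency is $1+3+1+3=8$ (each $x$ needs one more edge, each $y$ needs three more), so if you introduce $t$ auxiliary vertices you must add exactly $(8+5t)/2$ new edges: $4$ edges for $t=0$ (impossible, since $y$ can only reach $x',y'$ without multi-edges, gaining at most $2$), or $9$ edges for $t=2$. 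Your proposed edge set $z_1z_2,z_1x,z_1y,z_2x',z_2y'$ plus possibly $xx'$ or $yy'$ has at most $7$ edges and leaves $z_1,z_2$ at degree $3$ or $4$, so the result is neither $5$-regular nor an endblock with a single deficient vertex; and even after fixing the count one would still have to verify planarity and that every new face is a triangle. You flag this yourself as the place "where an off-by-one in degrees could invalidate the argument" — and indeed it does. A plan that ends with "once a valid gadget is exhibited, the contradiction is immediate" has not exhibited the gadget, so the case $a=1$ remains open in your write-up.

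The paper dispatches $a=1$ without any amalgamation at all: since $\deg(y)=2$ and $y$ lies on exactly one inner face, that face is a triangle $x,y,x_2$, so $xx_2$ is already an edge; deleting $y$ leaves $x$ with degree $3$ and $x_2$ with degree $4$, all inner faces triangles, and the outer face shortened by one — that is, a $(5;4,3|3)$-block, which you had just proved nonexistent in Lemma~\ref{lem:(5,3;4,3)-DNE}. This one-step deletion is the idea your proposal is missing; it sidesteps entirely the delicate bookkeeping you correctly identified as the obstacle.
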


\begin{proof}
  We have $a=1$ by Proposition~\ref{prop:b-reduction} and Lemmas~\ref{lem:(k,d)_no_a=d} and~\ref{lem:(k,d)_no_a=d-1}.
  
 Suppose $\deg(x) = 2$. We remove $x$ and obtain a $(5;4,3|3)$-block. But by Proposition~\ref{lem:(5,3;4,3)-DNE}, it is impossible.
\end{proof}

Now we investigate $(5;3,d_2|3)$-blocks.

\begin{lemma}\label{lem:(5,3;3,3)-DNE}
	A $(5;3,3|3)$-block does not exist.
\end{lemma}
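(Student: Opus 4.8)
The plan is to follow the pattern established in the previous lemmas: assume a $(5;3,3|3)$-block $B$ exists, use the reduction results to limit the value of $a$, and then in each remaining case either amalgamate copies of $B$ (or attach a small gadget) to build a graph that is forbidden by one of the non-existence theorems, or else identify the two degree-three vertices directly to produce a forbidden 2-connected 1-nearly Platonic graph or a forbidden endblock. Since $k=5$, $k_1=k_2=3$, and the only admissible common face size is $d=3$, Lemma~\ref{lem:(5,3;3,d_2)-reduction} (with $d_2=3$) applies, so we may assume $1\le a\le 3$.

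First I would dispose of the easy subcases. By Lemma~\ref{lem:(k,d)_no_a=d} there is no block with $a=d=3$, and by Lemma~\ref{lem:(k,d)_no_a=1} there is no $(5;4,4|3)$-block, but here $k_1=3\neq k-1=4$, so that lemma does not literally apply; instead I would handle $a=1$ directly. With $a=1$, take two copies $B,B'$ with degree-three vertices $x,y$ and $x',y'$, amalgamate $y$ with $y'$ into a single vertex of degree $3+3-2=4$, add the edge $xx'$, and then attach a new vertex $z$ joined to $x$, $x'$, and to the amalgamated vertex; checking degrees, this yields a $2$-connected graph all of whose inner faces are triangles and with one vertex of degree $4$ — a $(5;4|3)$-endblock — contradicting Theorem~\ref{thm:no-endblocks}. (If the degree bookkeeping does not close exactly with that gadget, the alternative is to amalgamate both $x$ with $x'$ and $y$ with $y'$ simultaneously, producing two degree-$4$ vertices and then splicing in two new vertices to restore $5$-regularity, again landing on a forbidden $2$-connected $1$-nearly Platonic graph or endblock; the choice of gadget is exactly the kind of small case analysis that appears in Lemmas~\ref{lem:(4,3;3,2)-DNE}--\ref{lem:(5,3;4,2)-DNE}.)

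The remaining case is $a=2$, which I expect to be the main obstacle and the heart of the proof. Here the boundary path is $x=x_0,x_1,x_2=y,x_3,\dots,x_{b+1}$ with $\deg(x_0)=\deg(x_2)=3$. The natural move is to add the edge $x_0x_2$, creating a new inner triangular face $x_0,x_1,x_2$ and raising both endpoints to degree $4$; the result is a $(5;4,4|3,\langle 2,b\rangle)$-block. By Lemma~\ref{lem:(5,3;4,4)-uniq} the $(5;4,4|3)$-block is unique and (since it is the icosahedron minus an edge) has its exceptional face of length exactly... one must check: adding the missing edge to the unique $(5;4,4|3)$-block gives the icosahedron, so the exceptional face of that block is a triangle, i.e. $b=2$ would be forced if the new block were to coincide with the unique one. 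But $b\ge a=2$, and if $b=2$ then after adding $x_0x_2$ we would get the icosahedron minus an edge whose two endpoints already had degree $4$ in $B$, i.e. degree $3$ in $B$; tracing back, the original $B$ would be the icosahedron minus two edges sharing a vertex, and one checks the two exceptional triangular faces would then actually be a single quadrilateral — contradicting that $B$ has exactly one exceptional face, or forcing $b>2$. When $b>2$, the new $(5;4,4|3,\langle 2,b\rangle)$-block has exceptional face of length $b\ge 3$, which is not the unique icosahedron-minus-an-edge, a contradiction.

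So the skeleton is: reduce to $a\in\{1,2\}$ via Lemma~\ref{lem:(5,3;3,d_2)-reduction}; kill $a=1$ by a copy-amalgamation gadget producing a forbidden $(5;4|3)$-endblock; kill $a=2$ by adding the chord $x_0x_2$ to land on a $(5;4,4|3)$-block and invoking its uniqueness from Lemma~\ref{lem:(5,3;4,4)-uniq}, with a short separate argument (exactly as in the $a=3,b>3$ step of Lemma~\ref{lem:(4,3;2,2)-uniq}) to rule out the borderline length. The step I would flag as requiring the most care is the $a=1$ gadget construction — getting a planar embedding in which all the new faces genuinely have size three and the degree sequence is exactly $(5;4|3)$, rather than accidentally creating a larger face or a multi-edge — since that is where the analogous proofs in this section are most delicate; if the single-gadget construction fails to be planar or simple, the fallback is the two-copies-two-amalgamations variant that routes through Theorem~\ref{thm:no-2-conn-1NP} instead of Theorem~\ref{thm:no-endblocks}.
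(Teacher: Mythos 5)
Your skeleton matches the paper's for two of the three cases, but there is a genuine gap at $a=3$. You correctly observe that Lemma~\ref{lem:(k,d)_no_a=1} does not literally apply when $k_1=3\neq k-1$, yet you then dismiss $a=3$ by citing Lemma~\ref{lem:(k,d)_no_a=d}, which has exactly the same restriction: it is stated (and its proof only works) for $(k;k-1,k_2|d)$-blocks, since it relies on $\deg(x_0)=k-1$ so that rerouting one boundary edge to $x_0$ saturates it. For a $(5;3,3|3)$-block with $a=3$ you cannot invoke it. The paper handles this case with a separate argument: replace the edge $x_2x_3$ by $x_0x_2$, creating a new inner triangle and making $\deg(x_0)=4$, $\deg(x_3)=2$; then, depending on whether the third vertex of the old triangle on $x_2x_3$ is a boundary vertex or an interior vertex, one lands on a $(5;4,3|3)$- or $(5;4,2|3)$-block (non-existent by Lemmas~\ref{lem:(5,3;4,3)-DNE} and~\ref{lem:(5,3;4,2)-DNE}) or on a $(5;4,2|3)$-block with $a=d$, which Lemma~\ref{lem:(k,d)_no_a=d} does kill because now $k_1=4=k-1$. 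Without some argument of this kind your proof is incomplete.

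Two smaller issues. For $a=1$, your primary gadget miscounts degrees: amalgamating $y$ with $y'$ alone produces a vertex of degree $3+3=6$, not $4$, so the construction does not close; the paper's clean move is to amalgamate the whole edge $xy$ with $x'y'$ (identifying $x$ with $x'$, $y$ with $y'$, and the edge), which immediately gives a $2$-connected $5$-regular graph with all inner faces triangular and outer face of size $2b\geq4$, forbidden by Theorem~\ref{thm:no-2-conn-1NP}. For $a=2$, your idea is right, but after adding the chord $x_0x_2$ the two degree-four vertices are joined by a boundary edge, so the new object is a $(5;4,4|3,\langle 1,b\rangle)$-block, not $\langle 2,b\rangle$; the contradiction is then immediate from Lemma~\ref{lem:(k,d)_no_a=1} (or from the uniqueness in Lemma~\ref{lem:(5,3;4,4)-uniq}, whose block has $a=2$), and none of your speculation about $b=2$ versus $b>2$ is needed.
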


\begin{proof}
	By Lemma~\ref{lem:(5,3;3,d_2)-reduction} we have $1\leq a\leq 3$. If $a=1$, we create two copies of the block $B$ with $t$ inner triangular faces (and $b>1$) and amalgamate the edges $xy$ and $x'y'$. This creates a 2-connected 5-regular graph with $2t$ inner triangular faces and the outer face of size $2b\geq4$. Such graph would be 1-nearly Platonic and cannot exist by Theorem~\ref{thm:no-2-conn-1NP}.
	
	When $a=2$, then by adding the edge $xy=x_0x_2$ we would obtain a $(5;4,4|3)$-block whose non-existence was proved in Lemma~\ref{lem:(5,3;4,4)-uniq}.
	
	For $a=3$, we replace the edge $x_2x_3=x_2y$ by edge $x_0x_2$, creating a new triangular face. Now $\deg(x)=4,\deg(y)=2$ and the boundary path from $x$ to $y$ is $x=x_0,x_2,v,y$ for some $v$. If $v-x_j$ for some $j>3$, then the block bounded by $x_0,x_2,x_j,x_{j+1},\dots,x_{a+b-1}$ is a $(5;4,3|3)$-block or $(5;4,2|3)$-block where $v=x_j$ is of degree three or two, respectively. Such blocks do not exist by Lemmas~\ref{lem:(5,3;4,3)-DNE} and~\ref{lem:(5,3;4,2)-DNE}.
	
	If $v$ is an inner vertex of $B$, then we obtain a $(5;4,2|3)$-block with $a=3$, which cannot exist by Lemma~\ref{lem:(k,d)_no_a=d}. All cases have been covered and the proof is complete. 
\end{proof}

\begin{lemma}\label{lem:(5,3;3,2)-uniq}
	The $(5;3,2|3)$-block is unique.
\end{lemma}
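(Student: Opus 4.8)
The plan is to mimic the structure of the preceding uniqueness lemmas, reducing to a small number of values of $a$ and using the non‑existence of endblocks and of $2$-connected $1$-nearly Platonic graphs as the main tools. By Lemma~\ref{lem:(5,3;3,d_2)-reduction} applied with $d_2 = 2$, we may assume $1 \leq a \leq 3$. The case $a = 3 = d$ is excluded by Lemma~\ref{lem:(k,d)_no_a=d}, and the case $a = 2 = d-1$ with $k_2 = 2 < k-1 = 4$ is excluded by Lemma~\ref{lem:(k,d)_no_a=d-1}. Hence the only surviving case is $a = 1$, where the boundary path from $x$ to $y$ has length one, i.e.\ $xy$ is an edge of $B$, with $\deg(x) = 3$ and $\deg(y) = 2$.

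For the case $a = 1$, I would first handle the degree‑$2$ vertex $y$: since $\deg(y) = 2$ and $y$ lies on the exceptional (outer) face, its two neighbors are $x = x_0$ and $x_2$. Contracting away $y$ (removing $y$ and adding the edge $x_0 x_2$ if it is not already present) should turn $y$'s two incident triangles/faces into the correct configuration, but one must be careful: the edge $x_0 x_2$ may already exist, creating a multi‑edge, so instead I would argue directly. Remove $y$; then $x_2$ drops to degree $4$ and $x_0 = x$ drops to degree $2$; the outer face has length $b$. If $x_0 x_2$ is not an edge, add it to recover a triangular face adjacent to the old position of $y$, making $x_2$ degree $5$ again and leaving $\deg(x_0) = 3$; this produces a $(5;3|3,\langle b\rangle)$-endblock, which does not exist by Theorem~\ref{thm:no-endblocks}. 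If $x_0 x_2$ already is an edge of $B$, then $x_0, x_2, y$ together with the edge $x_0 x_2$ already bound a triangle, and after removing $y$ we directly obtain a $(5;3|3)$-endblock with exceptional vertex $x_0$ — again impossible by Theorem~\ref{thm:no-endblocks}. This rules out $a = 1$ as well, and the conclusion is that \emph{no $(5;3,2|3)$-block exists}, contradicting the lemma as I have paraphrased it — so I must re‑read the statement.

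On reflection, the lemma asserts the $(5;3,2|3)$-block \emph{is unique}, not that it fails to exist, so the correct plan is to exhibit the block and show it is forced. After reducing to $a = 1$ as above, the contraction/endblock argument may fail because $x_0 x_2$ is already an edge \emph{and} the resulting object is not an endblock but a small $2$-connected graph that must be identified; alternatively the relevant doubling trick (taking two copies and amalgamating, as in Lemmas~\ref{lem:(4,3;2,2)-uniq} and~\ref{lem:(5,3;3,3)-DNE}) yields a $2$-connected $5$-regular graph whose outer face size is pinned down by Theorem~\ref{thm:no-2-conn-1NP} to be $3$, forcing the amalgamated graph to be the icosahedron. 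Concretely: take two copies $B, B'$ of the $a = 1$ block with $\deg(y) = \deg(y') = 2$, amalgamate $y$ with $y'$ into a single degree‑$2$ vertex, then amalgamate $x$ with $x'$ — or add suitable edges — to obtain a $5$-regular $2$-connected graph all of whose internal faces are triangles; by Theorem~\ref{thm:no-2-conn-1NP} the outer face must also be a triangle, so the graph is the icosahedron, and $B$ is a well‑determined fragment of it, hence unique. I would then display the block in a figure, paralleling Figures~\ref{fig:(5;4,4|3)} and~\ref{fig:(4;2,2|3)}.

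The main obstacle I anticipate is the bookkeeping around the degree‑$2$ vertex $y$ in the $a = 1$ case: one must track carefully whether the gluing produces multi‑edges, whether the resulting graph is genuinely $2$-connected, and whether it is an endblock (to invoke Theorem~\ref{thm:no-endblocks}) or a full $1$-nearly Platonic graph (to invoke Theorem~\ref{thm:no-2-conn-1NP}) — and to combine these so that the \emph{only} surviving configuration is the icosahedron‑fragment, with no freedom in the internal structure. A secondary subtlety is confirming that the reduction of Lemma~\ref{lem:(5,3;3,d_2)-reduction} is legitimately applicable with $d_2 = 2$ (it is stated for a generic $d_2$), and that after reduction the constraint $b \geq a$ together with $a = 1$ still leaves $b$ unbounded, so the uniqueness must come from the global rigidity argument rather than a bound on $b$.
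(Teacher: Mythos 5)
Your reduction to $1\le a\le 3$ via Lemma~\ref{lem:(5,3;3,d_2)-reduction} is fine, but the next two steps misapply the exclusion lemmas: Lemma~\ref{lem:(k,d)_no_a=d} and Lemma~\ref{lem:(k,d)_no_a=d-1} are stated only for $(k;k-1,k_2|d)$-blocks, i.e.\ for blocks whose larger exceptional degree is $k_1=k-1$. Here $k=5$ and $k_1=3=k-2$, not $4$, so neither lemma applies. This is not a technicality: the case $a=3=d$ that you discard is precisely the one that yields the unique $(5;3,2|3)$-block (amalgamating $x_0$ with $x_3$ gives a degree-$5$ vertex, and Theorem~\ref{thm:no-2-conn-1NP} forces $b=3$, so the amalgamated graph is the icosahedron and the block is the icosahedron with one vertex split). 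Your argument therefore eliminates the actual answer and is driven toward the wrong surviving case.

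The paper instead kills $a=1$ by taking \emph{three} copies $B^0,B^1,B^2$ and amalgamating $x^i$ (degree $3$) with $y^{i+1}$ (degree $2$) cyclically, producing a $2$-connected $5$-regular graph with outer face of size $3b\ge 6$, contradicting Theorem~\ref{thm:no-2-conn-1NP}; and it kills $a=2$ by adding the edge $x_0x_2$ to obtain a $(5;4,3|3)$-block, which does not exist by Lemma~\ref{lem:(5,3;4,3)-DNE}. Your proposed two-copy amalgamation for $a=1$ does not even balance degrees (identifying $y$ with $y'$ gives degree $4$, identifying $x$ with $x'$ gives degree $6$), and your endblock argument for $a=1$ at best shows $a=1$ is impossible, which you then try to reinterpret as producing the block. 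The proposal as written does not establish the lemma; the key missing ideas are the correct multiplicities in the amalgamation for $a=1$, the reduction of $a=2$ to the nonexistent $(5;4,3|3)$-block, and the vertex-amalgamation argument for $a=3$ that actually identifies the block.
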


\begin{proof}
	By Lemma~\ref{lem:(5,3;3,d_2)-reduction} we have $1\leq a\leq 3$. 
	If $a=1$, we create three copies $B^0,B^1,B^2$ of the block $B$ with $\deg(x^i)=3$ and $\deg(y^i)=2$ in each copy $B^i$. Assume that $B$ has $t$ internal triangular faces and observe that $b>1$ . Then we amalgamate $x^i$ with $y^{i+1}$ for all $i=0,1,2$, where the superscripts are calculated modulo 3. This way we obtain a 2-connected 5-regular graph with $3t+1$ inner triangular faces and the outer face of size $3b\geq6$. Because no such graph exists by Theorem~\ref{thm:no-2-conn-1NP}, this case is impossible.
	
	When $a=2$, then we add the edge $xy=x_0x_2$ and obtain a $(5;4,3|3)$-block, which cannot exist by Lemma~\ref{lem:(5,3;4,3)-DNE}.
	
	Now let $a=3$ and the outer face be $x_0,x_1,\dots,x_{a+b-1}$, where $\deg(x_0)=3, \deg(x_3)=2$, and $\deg(x_i)=5$ otherwise. 
	
	Amalgamate $x_0$ and $x_3$ into a new vertex $x'$ of degree five so that the new triangular face is $x',x_1,x_2$ and it is an inner face. The outer face is now $x',x_4,x_5,\dots,x_{a+b-1}$, and has size $a+b-3$.
	
	If $b>3$, we have $a+b-3\geq 4$. But then the resulting graph is a 1-nearly Platonic graph of type $(5|3)$ with exceptional face of size at least four, which is non-existent by Theorem~\ref{thm:no-2-conn-1NP}. Therefore, $b=3$. But then the new amalgamated graph has outer boundary of size three, namely $x',x_4,x_5$. Clearly, we have obtained the icosahedron, and the original block shown in Figure~\ref{fig:(5;3,2|3)} is unique.

\begin{figure}[H]
\centering
\includegraphics[scale=.14]{./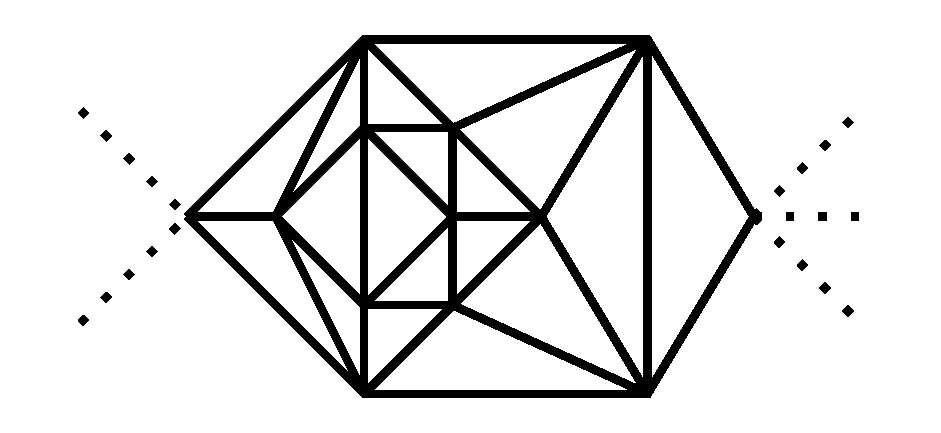}
\caption{Unique $(5;3,2|3)$-block}\label{fig:(5;3,2|3)}
\end{figure}
	
	Since there are no other values of $a$ to investigate, the proof is now complete.
\end{proof}

For the remaining case, that is, the $(5;2,2|3)$-block, we show directly the non-existence of a graph containing such block, rather than investigating the block on its own.

\begin{lemma}\label{lem:(5,3;2,2)-DNE}
	There is no $2$-nearly Platonic graph with touching faces containing a $(5;2,2|3)$-block.
\end{lemma}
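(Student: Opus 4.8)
The plan is to mimic the reduction strategy used throughout the preceding lemmas, but with an extra twist: since a $(5;2,2|3)$-block $B$ has \emph{both} exceptional vertices of degree $2$, directly adding an edge $xy$ would make both endpoints degree $3$, not $5$, so we cannot simply close up the block. Instead I would first apply Lemma~\ref{lem:(5,3;3,d_2)-reduction} (adapted, since here $k_1=2$ rather than $3$, so I really need the general Proposition~\ref{prop:b-reduction}) to assume $1\le a\le 3$; then dispose of $a=2$ and $a=3$ using Lemmas~\ref{lem:(k,d)_no_a=d} and~\ref{lem:(k,d)_no_a=d-1} (with $d=3$), which leaves only $a=1$. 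So the heart of the matter is the case $a=1$: the boundary path of the exceptional face is $x=x_0,x_1=y,x_2,\dots,x_{b}$ with $\deg(x_0)=\deg(x_1)=2$ and $b>1$.

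In the $a=1$ case I would argue at the level of the whole $2$-nearly Platonic graph $G$ rather than the block, exactly as the lemma statement invites. Recall that when the two exceptional faces touch in a single vertex $z$, splitting $z$ produces a block with $k_2=2$ and $2\le k_1\le 3$; the $(5;2,2|3)$-block is the subcase $k_1=2$. So $G$ is obtained by taking $B$ and re-identifying $x_0$ with $x_1$ into a single degree-$4$ vertex $z$, whose two incident exceptional faces are the two faces of $G$ of sizes $m_1=a=1$ and $m_2=b$ — but a face of size $1$ would be a loop, contradicting simplicity. More carefully: in $G$ the vertex $z$ has degree $4$, lies on the two exceptional faces, and the edge $x_0x_1$ of $B$ becomes a loop at $z$ after amalgamation, which is impossible in a simple graph. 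Hence the single-vertex-touching scenario with $a=1$ cannot occur, and I expect this to kill the case outright. If the bookkeeping on $m_1,m_2$ is subtler than that (e.g. the exceptional face of size $a=1$ in the block corresponds to something degenerate rather than a genuine face of $G$), the fallback is the amalgamation trick used for $(5;3,2|3)$ and $(4;2,2|3)$: take three copies $B^0,B^1,B^2$, amalgamate $x_1^{i}$ with $x_0^{i+1}$ cyclically mod $3$ to get a $5$-regular $2$-connected graph with $3t+1$ inner triangles and outer face of size $3b\ge 6$, contradicting Theorem~\ref{thm:no-2-conn-1NP}; or take two copies and join the degree-$2$ vertices with a small gadget to land on a $(5;k_1|3)$-endblock excluded by Theorem~\ref{thm:no-endblocks}.

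The steps, in order: (1) invoke Proposition~\ref{prop:b-reduction} / Lemma~\ref{lem:(5,3;3,d_2)-reduction} to reduce to $1\le a\le 3$; (2) exclude $a=3$ by Lemma~\ref{lem:(k,d)_no_a=d} and $a=2$ by Lemma~\ref{lem:(k,d)_no_a=d-1} (noting $k_2=2<k-1=4$, so the hypotheses are met); (3) for $a=1$, observe the exceptional faces of $G$ touch at a single vertex $z$ of degree $4$, so $G$ is of type $(5|3)$ with the touching vertex arising by amalgamating $x_0,x_1$; (4) derive a contradiction — either directly because the amalgamation creates a loop / a face of size $1$ violating simplicity, or via the triple-copy amalgamation producing a forbidden $2$-connected $1$-nearly Platonic graph, or via a two-copy construction yielding a forbidden endblock. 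The main obstacle I anticipate is step (4): making the $a=1$ argument airtight, i.e. being sure that the gadget one attaches to the two degree-$2$ vertices genuinely produces faces of size exactly $3$ and bumps both degrees to $5$ while keeping the graph simple, $2$-connected, and planar — the earlier lemmas show this requires a case-specific choice of added vertices and edges, and here the fact that \emph{both} exceptional vertices have degree $2$ (rather than one of degree $3$) is what forces working with three copies or a dedicated small subgraph rather than a single added edge.
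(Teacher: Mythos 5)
Your proposal does not go through, and the gaps are structural rather than cosmetic. First, every reduction lemma you invoke in steps (1) and (2) --- Proposition~\ref{prop:b-reduction}, Lemma~\ref{lem:(k,d)_no_a=d}, Lemma~\ref{lem:(k,d)_no_a=d-1}, and Lemma~\ref{lem:(5,3;3,d_2)-reduction} --- has the hypothesis $k_1=k-1$ (or $k_1=3$ for $k=5$) built into both its statement and its proof: each one works by rerouting or adding a single edge at $x_0$ so that $x_0$ jumps from degree $k-1$ to degree $k$. For a $(5;2,2|3)$-block $x_0$ has degree $2$, so one rerouted edge leaves it at degree $3$ and produces a boundary with \emph{three} deficient vertices, which is no longer a block of any admissible type. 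Proposition~\ref{prop:b-reduction} is not ``the general'' version; it is exactly as special as the others. So the reduction to $1\le a\le 3$ and the exclusion of $a=2,3$ are unsupported. Second, and more seriously, your key idea for $a=1$ rests on a false picture: you cannot obtain a $(5;2,2|3)$-block by splitting a single touching vertex $z$ of $G$, because splitting a vertex of degree $5$ yields $\deg(x)+\deg(y)=5$, whence $(k_1,k_2)=(3,2)$, never $(2,2)$. In $G$ the two degree-$2$ vertices of the block are \emph{distinct} vertices, each of degree $5$, each sending three further edges into the rest of $G$; there is no amalgamation, no degree-$4$ vertex, and no loop. Your fallback fails for the same arithmetic reason: amalgamating $x_1^i$ with $x_0^{i+1}$ cyclically glues two degree-$2$ vertices into one of degree $4\ne 5$, so the triple-copy graph is not $5$-regular and Theorem~\ref{thm:no-2-conn-1NP} cannot be applied. (The analogous tricks work for $(4;2,2|3)$ because $2+2=4=k$, and for $(5;3,2|3)$ because $3+2=5=k$.)

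The paper's argument is of a different nature and is worth internalizing: it never normalizes $a$ at all. Instead it looks at how a degree-$2$ vertex $x_0$ of the block is saturated inside $G$. Its three extra neighbors create three paths through $x_0$ that must lie on non-triangular faces unless $x_0$ is covered by another $(5;k_1,d_2|3)$-block; since there are only two exceptional faces, failing that forces a self-touching face and hence an endblock, contradicting Theorem~\ref{thm:no-endblocks}. Thus each degree-$2$ vertex must be shared with a $(5;3,d_2|3)$-block, which by Lemmas~\ref{lem:(5,3;4,3)-DNE} and~\ref{lem:(5,3;3,3)-DNE} can only be a $(5;3,2|3)$-block --- but attaching such blocks always leaves fresh degree-$2$ vertices, so the chain never closes, the graph has connectivity one, and Theorem~\ref{thm:no-endblocks} applies again. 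If you want to salvage your write-up, this ``saturate the degree-$2$ vertices and watch the chain fail to close'' argument is the missing idea.
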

		 
\begin{proof}
	Suppose such a graph with a $(5;2,2|3)$-block does exist.
	Let $x_0$ and $x_a$ be the two vertices of degree two. First we show that each of them must be shared by a $(5;3,d_2|3)$-block. Suppose it is not the case, such a block $B$ exists in a 2-nearly Platonic graph $G$ and $x_0$ does not belong to any $(5;3,d_2|3)$-block. Denote the neighbors of $x_0$ not in $B$ by $v_1,v_2,v_3$ and assume that there is a drawing of $G$ where the neighbors of $x_0$ in clockwise order are $x_1,v_1,v_2,v_3,x_{a+b-1}$. At least one of edges $x_0v_1,x_0v_3$ does not belong to any $(5;d_1,d_2|3)$-block, say it is $v_1$. Then the paths $P_1=x_1,x_0,v_1, P_2=v_1,x_0,v_2$, and $P_3=v_3,x_0,x_{a+b-1}$ all belong to boundaries of non-triangular faces. But because we only have two such faces,  $F_1$ and $F_2$, one of them must contain $x_0$ twice. Say it is $F_1$. Hence, $F_1$ is self-touching, and we must have an endblock of some kind, which is impossible by Theorem~\ref{thm:no-endblocks}.
	
	Hence, both $x_0$ and $x_a$ must belong to some $(5;3,d_2|3)$-block. Because neither $(5;4,3|3)$-block nor $(5;3,3|3)$-block exist, it must be a  $(5;3,2|3)$-block. It should be now obvious that if we attach such a block to each $x_0$ and $x_a$, the new graph will again have two new vertices of degree two, and such a chain of blocks can never be closed to create a 2-connected $2$-nearly Platonic graph. Therefore, the graph will have connectivity one, and consequently contain an endblock. This is impossible by Theorem~\ref{thm:no-endblocks} and the proof is complete.	
\end{proof}

\subsection{Classification: touching exceptional faces}
	\label{subs:touching-main-result}

Based on our lemmas, we can now state the main result of this section.

\begin{theorem}\label{thm:touching}
	There are exactly seven infinite families of $2$-nearly Platonic graphs with touching exceptional faces; one of each of types $(3|3),(3|4),(3|5)$, two of type $(4|3)$, and two of type $(5|3)$, shown in Figures~\ref{fig:chain(3,3)}--\ref{fig:chain(5,3)b}.
	
	Moreover, all these graphs have the two exceptional faces of the same size.
\end{theorem}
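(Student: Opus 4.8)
The plan is to assemble Theorem~\ref{thm:touching} from the block classification already established in Sections~\ref{subs:n-self-touch}--\ref{subs:(5;k_1,k_2|d)-unique}. First I would recall the structural reduction from Section~\ref{sec:notation}: if the two exceptional faces of a $2$-nearly Platonic graph $G$ touch, then by Lemma~\ref{lem:no-self-touch} neither is self-touching, so they share one or more vertices, and splitting a shared vertex $z$ into $x,y$ produces a block. The first step is therefore to separate two regimes: (i) the exceptional faces touch in exactly one vertex, and (ii) they share at least two vertices (equivalently, at least one edge, or a path). In regime (i) the observation in Section~\ref{sec:notation} already forces the type to be $(4|3)$ or $(5|3)$ with $k_2=2$ and $2\le k_1\le 3$; here the relevant blocks are exactly the $(4;3,2|3)$-, $(4;2,2|3)$-, $(5;3,2|3)$-, and $(5;2,2|3)$-blocks. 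Lemma~\ref{lem:(4,3;3,2)-DNE} and Lemma~\ref{lem:(5,3;2,2)-DNE} eliminate two of these, and the surviving $(4;2,2|3)$- and $(5;3,2|3)$-blocks are each unique by Lemmas~\ref{lem:(4,3;2,2)-uniq} and~\ref{lem:(5,3;3,2)-uniq}; gluing $x$ back to $y$ recovers $G$, and iterating the block along a cyclic chain yields precisely one infinite family of type $(4|3)$ and one of type $(5|3)$, in both of which the two exceptional faces visibly have equal size (they are the two copies of the $a,b$-segments, which after the single-vertex identification both have the same length).

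The second, and more substantial, step handles regime (ii). Here splitting the shared structure along the common path yields a genuine $(k;k-1,k_2|d)$- or $(k;k-2,k_2|d)$-block $B$, and the admissible types are pinned down by Euler's formula to $(3|3),(3|4),(3|5),(4|3),(5|3)$. Using Proposition~\ref{prop:b-reduction} (and Lemma~\ref{lem:(5,3;3,d_2)-reduction} for $k=5$, $k_1=3$) I reduce to $a\le d$ (resp. $a\le 3$); Lemmas~\ref{lem:(k,d)_no_a=d},~\ref{lem:(k,d)_no_a=d-1},~\ref{lem:(k,d)_no_a=1} strip away the extreme values of $a$; and then the uniqueness lemmas~\ref{lem:(3,d;2,2)-uniq},~\ref{lem:(4,3;3,3)-uniq},~\ref{lem:(4,3;2,2)-uniq},~\ref{lem:(5,3;4,4)-uniq},~\ref{lem:(5,3;3,2)-uniq} (together with the non-existence Lemmas~\ref{lem:(4,3;3,2)-DNE},~\ref{lem:(5,3;4,3)-DNE},~\ref{lem:(5,3;4,2)-DNE},~\ref{lem:(5,3;3,3)-DNE}) leave exactly one viable block for each of the five types: the tetrahedron/cube/dodecahedron/octahedron/icosahedron minus one edge, or the octahedron-minus-edge variant $(4;2,2|3)$ and icosahedron variant $(5;3,2|3)$. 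The key point to verify is that each such block reassembles into a $2$-nearly Platonic graph in essentially one way: identifying $x$ with $y$ (when $\deg x+\deg y=k$) or, more generally, forming a closed cyclic chain of copies of the block so that every vertex attains degree $k$ and exactly two non-$d$ faces survive. I would argue that the block has two ``ports'' $x,y$ with prescribed degree deficiencies $k-k_1$ and $k-k_2$, and the only way to saturate all deficiencies while remaining planar and $2$-connected is to arrange $r\ge 1$ copies in a cycle; counting the two exceptional faces shows one comes from each ``end'' of the cyclic arrangement and they have lengths determined by $a$ and $b$, which the reductions have forced to be equal.

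The final step is the balance claim, which at this point is a bookkeeping consequence rather than a new argument: in every one of the seven families, the two disparate faces are the two faces of $G$ not of size $d$, and the explicit description of the unique block shows these two faces are congruent — either literally swapped by the symmetry that exchanges the two ports of the block, or, in the chained families, both equal to the union over the $r$ copies of a fixed boundary segment. So $m_1=m_2$ in each case.

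The hard part is the second step's reassembly/uniqueness argument: knowing the block is unique does not by itself pin down the global graph, because a priori several copies could be glued in several planar patterns. The obstruction to watch is ruling out ``branching'' chains and non-cyclic gluings; the tool for this is again Theorem~\ref{thm:no-endblocks} (any dead end forces an endblock) together with Theorem~\ref{thm:no-2-conn-1NP} (any gluing that closes up prematurely or leaves only one exceptional face gives a forbidden $1$-nearly Platonic graph), exactly as these theorems were used inside the individual lemmas. I expect the write-up of this step to require a careful case analysis of how many copies of the block meet at each port, but no new ideas beyond those already deployed.
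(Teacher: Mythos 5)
Your proposal is correct and follows essentially the same route as the paper: its proof of Theorem~\ref{thm:touching} is exactly an assembly of the block non-existence and uniqueness lemmas of Section~\ref{sec:touching} into the seven cyclic-chain families, with the balance claim read off from the fact that every surviving block has $a=b$. If anything, you are more explicit than the paper about the one step it leaves implicit (that unique blocks can only be reassembled into closed chains, via Theorems~\ref{thm:no-endblocks} and~\ref{thm:no-2-conn-1NP}); the only slip worth noting is that in your regime (i) the split vertex satisfies $\deg(x)+\deg(y)=k$, so only the $(4;2,2|3)$- and $(5;3,2|3)$-blocks can arise there, which is harmlessly narrower than the list you give.
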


\begin{proof}
	For graphs of type $(3|d)$, the only blocks can be $(3;2,2|d)$ and they are all unique by Lemma~\ref{lem:(3,d;2,2)-uniq}.
	
	For type $(3|3)$ the only possible block is the $(3;2,2|3,\langle 2,2\rangle)$-block isomorphic to the tetrahedron with one removed edge, and the graph must be a chain  alternating the $(3;2,2|3,\langle 2,2\rangle)$-blocks and graphs $K_2$.

	\begin{figure}[H]
		\centering
		\includegraphics[scale=.2]{./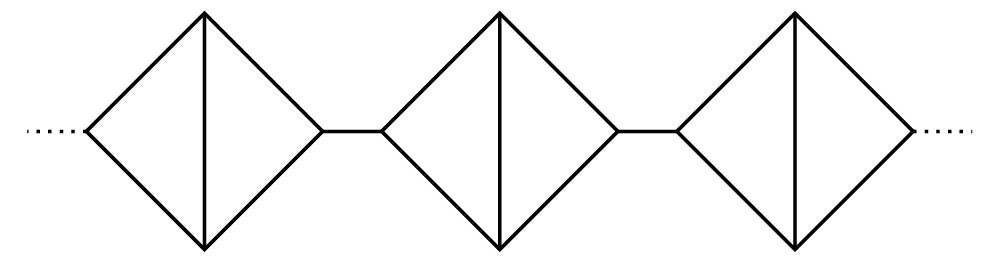}
		\caption{Chain of  blocks of type $(3;2,2|3)$ and $K_2$, tetrahedron edge cycle}\label{fig:chain(3,3)}
	\end{figure}
	
	Next, for type $(3|4)$ the only possible block is the $(3;2,2|4,\langle 3,3\rangle)$-block isomorphic to the cube with one removed edge, and the graph is a chain  alternating the $(3;2,2|4,\langle 3,3\rangle)$-blocks and graphs $K_2$.

	\begin{figure}[H]
		\centering
		\includegraphics[scale=.2]{./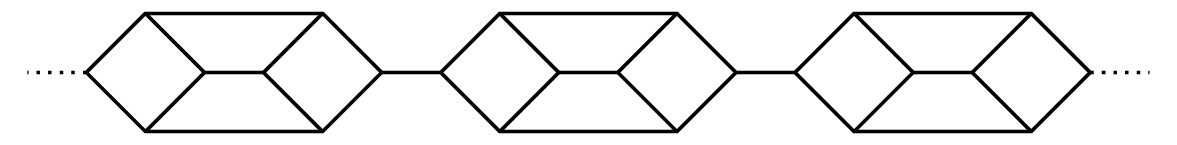}
		\caption{Chain of  blocks of type $(3;2,2|4)$ and $K_2$, cube edge cycle}\label{fig:chain(3,4)}
	\end{figure}
	
	Once again, for type $(3|5)$ the only possible block is the $(3;2,2|5,\langle 4,4\rangle)$-block isomorphic to the dodecahedron with one removed edge, and the graph is a chain  alternating the $(3;2,2|5,\langle 4,4\rangle)$-blocks and graphs $K_2$.

	\begin{figure}[H]
		\centering
		\includegraphics[scale=.25]{./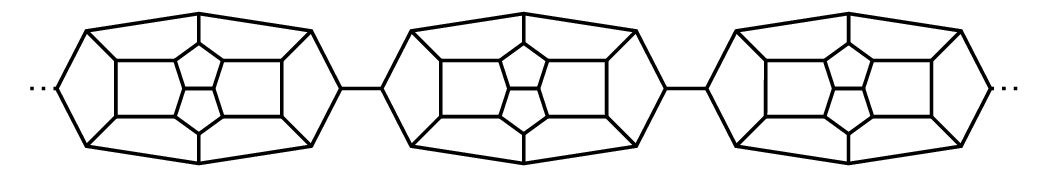}
		\caption{Chain of  blocks of type $(3;2,2|5)$ and $K_2$, dodecahedron edge cycle}\label{fig:chain(3,5)}
	\end{figure}

	For type $(4|3)$, the blocks could possibly be only of type $(4;3,3|3), (4;3,2|3)$, or $(4;2,2|3)$. A $(4;3,2|3)$-block does not exist by Lemma~\ref{lem:(4,3;3,2)-DNE}; the other two are unique by Lemmas~\ref{lem:(4,3;3,3)-uniq} and~\ref{lem:(4,3;2,2)-uniq}. Hence, the graph is either a chain consisting of the  $(4;3,3|5,\langle 2,2\rangle)$-blocks (that is, octahedrons without an edge) and graphs $K_2$, or a chain of $(4;2,2|5,\langle 3,3\rangle)$-blocks, arising from octahedron by splitting one vertex.

	\begin{figure}[H]
		\centering
		\includegraphics[scale=.25]{./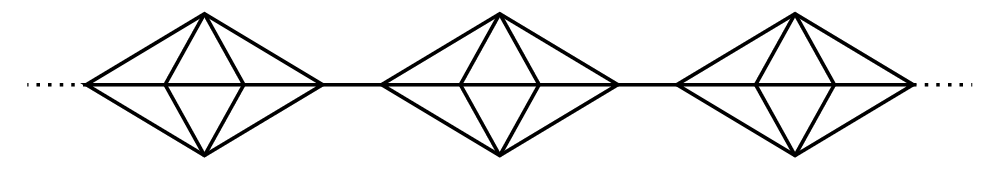}
		\caption{Chain of  blocks of type $(4;3,3|3)$  and $K_2$, octahedron edge cycle}\label{fig:chain(4,3)a}
	\end{figure}

	\begin{figure}[H]
		\centering
		\includegraphics[scale=.25]{./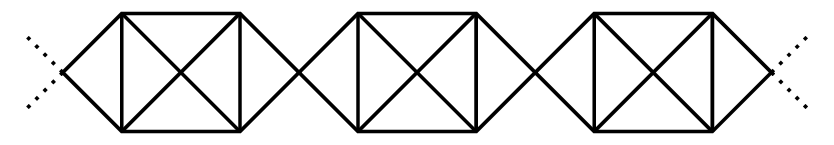}
		\caption{Chain of  blocks of type $(4;2,2|3)$, octahedron vertex cycle}\label{fig:chain(4,3)b}
	\end{figure}

	For type $(5|3)$, blocks of type $(5;4,3|3)$ and $(5;4,2|3)$ do not exist by Lemmas~\ref{lem:(5,3;4,3)-DNE} and~\ref{lem:(5,3;4,2)-DNE}, respectively. The block of type $(5;4,4|3)$ is unique by Lemma~\ref{lem:(5,3;4,4)-uniq}; it is the $(5;4,4|3, \langle 2,2\rangle)$-block, isomorphic to the icosahedron with one removed edge. The resulting graph then is a chain of the $(5;4,4|3, \langle 2,2\rangle)$-blocks alternating with graphs $K_2$.

	\begin{figure}[H]
		\centering
		\includegraphics[scale=.25]{./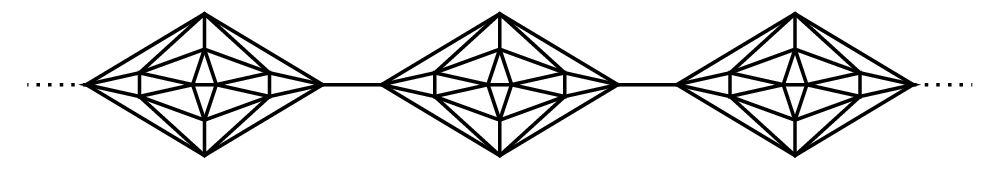}
			\caption{Chain of  blocks of type $(5;4,4|3)$  and $K_2$, icosahedron edge cycle}\label{fig:chain(5,3)a}
	\end{figure}
	
	Blocks of type $(5;3,3|3)$ and $(5;2,2|3)$ do not exist by Lemmas~\ref{lem:(5,3;3,3)-DNE} and~\ref{lem:(5,3;2,2)-DNE}. The block of type $(5;3,2|3)$ is unique by Lemma~\ref{lem:(5,3;3,2)-uniq}; it is the $(5;3,2|3, \langle 3,3\rangle)$-block, obtained from the icosahedron by splitting one vertex into two vertices of degree three and two, respectively. The resulting graph then is a chain of the $(5;3,2|3, \langle 2,2\rangle)$-blocks.

	\begin{figure}[H]
		\centering
		\includegraphics[scale=.3]{./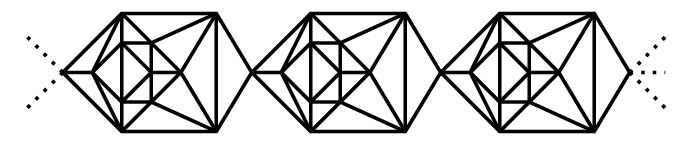}
			\caption{Chain of  blocks of type $(5;3,2|3)$, icosahedron vertex cycle}\label{fig:chain(5,3)b}
	\end{figure}
	
\end{proof}


\section{Non-touching exceptional faces}\label{sec:non-touching}

\subsection{New notions}\label{subs:new-notions}

Let $F_1, F_2$ be the disjoint outer and inner disparate faces, respectively. We denote their respective boundaries by $x_1,x_2,\dots ,x_n$, and $y_1,y_2,\dots ,y_m $ in clockwise order. We define the distance between $F_1$ and $F_2$ as
 $$
	 \dist(F_1,F_2)=\min\{\dist(x_i,y_j)|x_i\in F_1,y_j\in F_2\}.
 $$

In a subgraph of a 2-nearly Platonic graph of type $(k|d)$, a vertex is \emph{saturated}, if it is of degree $k$. It should be obvious that in a 2-nearly Platonic graph with non-touching exceptional faces, each vertex must belong to at least two faces of size $d$. Similarly, in a subgraph of a 2-nearly Platonic graph of type $(k|d)$, a path of length $d-1$ is \emph{weakly saturated}, if all its internal vertices are of degree $k$. 

We start with some easy observations regarding the graphs of types $(3|3)$, $(3|4)$, and $(4|3)$.

\subsection{Graphs of types $(3|3)$, $(3|4)$, and $(4|3)$}\label{subs:small-types}

\begin{obs}\label{obs:no_(3|3)_non-touching}
	There is no $2$-nearly Platonic graph of type $(3|3)$ with non-touching exceptional faces.
\end{obs}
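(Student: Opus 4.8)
The plan is to reach a contradiction purely from Euler's formula together with a double count of the incidences between vertices and triangular faces. First I would record the numerology forced by $3$-regularity and connectedness. The handshake lemma gives $2|E| = 3|V|$, so $|V|$ is even, and Euler's formula gives $|F| = 2 - |V| + |E| = 2 + |V|/2$. Since all faces but the two exceptional ones have size $3$, the number of triangular (common) faces is exactly $|F| - 2 = |V|/2$.

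Next I would invoke the remark recorded just before the statement: since the two exceptional faces of $G$ are vertex-disjoint, no vertex lies on both of them, and as $G$ is $3$-regular each vertex is incident to exactly three faces (self-touching faces being excluded by Lemma~\ref{lem:no-self-touch}), so every vertex of $G$ lies on at least two faces of size $3$. Now I count the set of incident pairs (vertex, triangular face) in two ways. On one hand, in a simple graph a face of size $3$ is a genuine triangle on three distinct vertices, so each triangular face contributes exactly three such pairs, for a total of $3(|F|-2) = 3|V|/2$. On the other hand each of the $|V|$ vertices lies on at least two triangular faces, so the total is at least $2|V|$. Hence $3|V|/2 \ge 2|V|$, i.e. $|V| \le 0$, which is absurd; this proves the Observation.

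I do not expect any genuine obstacle here — the statement is, as its name advertises, an observation, and everything it needs (Euler's formula, the saturation remark, Lemma~\ref{lem:no-self-touch}) is already on the table. The only two points that deserve a word of care are that $G$ is connected (so Euler's formula, and hence the face count $|F|-2 = |V|/2$, is valid) and that in a simple graph a face of size three really is bounded by a triangle on three distinct vertices (so that $3(|F|-2)$ is an exact incidence count, not merely an upper bound). As an alternative that sidesteps even this last remark, the same Euler computation gives $m_1 + m_2 = 3|V|/2$ for the two exceptional face sizes, while Lemma~\ref{lem:no-self-touch} guarantees each exceptional face is bounded by a cycle, so vertex-disjointness forces $m_1 + m_2 = |V(F_1)| + |V(F_2)| = |V(F_1) \cup V(F_2)| \le |V|$; comparing the two yields $3|V|/2 \le |V|$, the same contradiction.
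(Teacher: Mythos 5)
Your proof is correct, but it follows a genuinely different route from the paper's. The paper argues locally: it splits into the cases $\dist(F_1,F_2)=1$ and $\dist(F_1,F_2)\geq 2$, and in each case uses saturation of a boundary vertex $x_i$ to force two triangular faces at $x_i$ whose third vertices coincide, producing a vertex of degree at least four --- the same template it then reuses for types $(3|4)$ and $(4|3)$. You instead run a global Euler/double-counting argument: $|F|-2=|V|/2$ triangular faces, each contributing exactly three vertex--face incidences (correctly noting that in a simple graph a face of length $3$ is a genuine triangle), against the lower bound of two triangular faces per vertex, which itself rests on the non-touching hypothesis together with Lemma~\ref{lem:no-self-touch}; the resulting inequality $3|V|/2\geq 2|V|$ is absurd. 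Your closing variant, $m_1+m_2=3|V|/2$ versus $m_1+m_2\leq |V|$, is an equally valid and arguably cleaner form of the same count. What each approach buys: yours dispatches the $(3|3)$ case in one stroke with no case analysis on the distance, but it is special to $(3|3)$ --- for $(3|4)$ the same computation gives $m_1+m_2=|V|$ and for $(4|3)$ it gives $m_1+m_2=|V|/2$, both consistent with the prism and antiprism that actually exist, so no contradiction arises there; the paper's local saturation argument is weaker here but is the tool that carries over to classify those neighbouring types. Both points you flag as needing care (connectedness for Euler's formula, and that a length-$3$ face is a triangle on three distinct vertices) are indeed the right ones, and you handle them adequately.
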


\begin{proof}
	By contradiction. Let $\dist(F_1,F_2)=1$. Then there is an edge $x_iy_i$ for some $i$. Because the faces are non-touching, we have $x_{a}\neq y_{b}$ for any $a,b$. Vertex $x_i$ is saturated, having neighbors $x_{i-1},x_{i+1},y_i$, and must belong to a triangular face $x_i,x_{i+1},y_i$. But then $y_i$ is of degree at least four, a contradiction. 
	
	If $\dist(F_1,F_2)=\dist(x_i,y_i)\geq2$, then we have a path $x_i,v_1,v_2,\dots,y_i$ (where possibly $v_2=y_j$). Again, $x_i$ is saturated, hence must belong to triangular faces $x_i,x_{i-1},v_1$ and $x_i,x_{i+1},v_1$, and $v_1$ must be of degree at least four, a contradiction again.
\end{proof}

\begin{obs}\label{obs:(3|4)_non-touching}
	The only $2$-nearly Platonic graph of type $(3|4)$ with non-touching exceptional faces is a prism.
\end{obs}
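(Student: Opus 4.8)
The plan is to argue that in a $2$-nearly Platonic graph $G$ of type $(3|4)$ with non-touching exceptional faces $F_1,F_2$, the local structure around each vertex is rigidly forced, and that propagating this structure forces $G$ to be the triangular prism $K_3\times K_2$ (whose two triangular faces are the exceptional ones while the three square faces are the common $4$-faces). First I would fix a planar embedding and recall that, since the exceptional faces are non-touching, every vertex is saturated, i.e.\ has degree exactly $3$, and every vertex lies on at least two faces of size $4$. Since each vertex has degree $3$, it lies on exactly three faces; so every vertex lies on either three $4$-faces, or on two $4$-faces and one exceptional face.

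Next I would analyze the distance $\dist(F_1,F_2)$. Suppose first there is an edge $x_iy_j$ joining the two boundaries (distance $1$): then $x_i$ has its third neighbor on $F_2$, and the two faces of $x_i$ other than $F_1$ are two $4$-faces, each using the edge $x_iy_j$; walking around these $4$-faces from $x_i$ I would trace out a short ``ladder'' between the boundaries of $F_1$ and $F_2$. The key local step is that a $4$-face incident to a degree-$3$ vertex $x_i$ on $F_1$ and containing the edge $x_{i-1}x_i$ must be $x_{i-1},x_i,v,w$ with $v$ the third neighbor of $x_i$; iterating the ``each vertex sees two $4$-faces'' condition around all of $F_1$ forces a band of $4$-faces glued along $F_1$, whose outer boundary (away from $F_1$) is a closed walk of the same length $n=|F_1|$, with all its vertices saturated. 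The same holds for $F_2$, and since $G$ is finite and connected with only two exceptional faces, these two bands must coincide: $n=m$, the band between them is a single ring of $n$ quadrilaterals, $F_1$ and $F_2$ are both $n$-cycles, and $G=C_n\times K_2$. Finally, $3$-regularity forces each vertex of $C_n\times K_2$ to have degree $3$ automatically, but the ring structure means each ``rung'' vertex already has its three neighbors used, so no further faces can be attached; counting with Euler's formula, $|V|=2n$, $|E|=3n$, $|F|=n+2$, and the $n$ band faces have size $4$ while the two caps have size $n$, so for these to be the exceptional (non-$4$) faces we need $n\neq4$; but also every non-cap face has size $4$, and a direct check (or invoking that the only such graph with all non-cap faces quadrilateral is the prism, matching the known family) forces $n=3$.

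Actually the cleanest route for the last step is: having shown $G=C_n\times K_2$ with the two $n$-cycles as the exceptional faces and the $n$ square faces as the common faces, we simply need $n\neq 4$ for the cycles to be ``disparate''; but we must also rule out larger $n$. Here I would appeal to the global count: a $2$-nearly Platonic graph of type $(3|4)$ has $f$ faces, $f-2$ of size $4$, so $2|E|=4(f-2)+m_1+m_2$ where $m_1,m_2$ are the two cap sizes; combined with $3|V|=2|E|$ and Euler's formula $|V|-|E|+f=2$, one gets $m_1+m_2 = 12 - \text{(something forcing smallness)}$ — working this out pins $m_1=m_2=3$, hence $n=3$ and $G$ is the prism. (Since this observation is only an ``easy observation,'' the authors likely give a short version of exactly this, possibly just the distance-$1$ forcing plus the band argument.)

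The main obstacle I anticipate is making the ``band of quadrilaterals propagates all the way around and closes up into $C_n\times K_2$'' argument airtight without hand-waving: one must be careful that the $4$-faces incident to consecutive boundary vertices of $F_1$ are genuinely distinct and genuinely share the expected edges, that the outer boundary of the band is a cycle (not a more complicated closed walk, which is where $2$-connectedness or Theorem~\ref{thm:2-conn-edges-on-bound} would be invoked), and that no vertex of the band has a spare half-edge left over. Once the band is shown to be a clean cylinder of quadrilaterals bounded by two cycles with no extra structure, the identification $G\cong C_n\times K_2$ and the final arithmetic ruling out $n\neq 3$ are routine.
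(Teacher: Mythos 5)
Your band-propagation argument is essentially the paper's approach: the paper forces the rungs $x_iy_i$ one at a time around $F_1$ (each quadrilateral face incident with a saturated boundary vertex forces the next rung), and it dispatches $\dist(F_1,F_2)\ge 2$ by a short saturation argument that produces a forced edge and hence a path shorter than the assumed shortest one. So the skeleton of your proof is fine, and your worry about making the ``band closes up into $C_n\times K_2$'' step airtight is exactly the right place to be careful.

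However, there is a genuine error in your final step. In this paper ``prism'' denotes the \emph{infinite family} of $n$-gonal prisms $C_n\times K_2$ (see Theorem~\ref{thm:non-touching}, which lists the prism as one of seven infinite families, and Theorem~\ref{thm:main5}); the correct conclusion is $G\cong C_n\times K_2$ for any $n\ge 3$ with $n\neq 4$, not the triangular prism alone. Your attempt to pin $n=3$ by Euler's formula does not work: from $3|V|=2|E|$, $|V|-|E|+f=2$ and $2|E|=4(f-2)+m_1+m_2$ one gets $f=2+|V|/2$ and hence $m_1+m_2=|V|$, which is satisfied by every $n$-prism ($m_1+m_2=2n=|V|$) and imposes no upper bound on $n$. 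So the claimed ``pins $m_1=m_2=3$'' is false, and if you carried the computation through honestly you would find yourself trying to prove a statement that is not true. Deleting that last step (and stopping at $G\cong C_n\times K_2$ with $n\neq4$) repairs the proof, provided you also make the ``two bands coincide'' claim rigorous --- e.g.\ by observing that the outer boundary $v_1,\dots,v_n$ of the band around $F_1$ consists entirely of saturated vertices, so the region it bounds on the far side is a single face of size $n\neq 4$, which must therefore be $F_2$; this simultaneously rules out $\dist(F_1,F_2)\ge 2$.
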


\begin{proof}
	If $\dist(F_1,F_2)=1$, the graph will be a prism. 
	Let the shortest path be $x_1y_1$, weakly saturating the path $x_2,x_1,y_1,y_2$, and since the common face is of degree four, $x_2y_2$ is forced. Using the same argument repeatedly, edge $x_iy_i$ is forced for every $i$. Hence, the graph must be a prism. 
	
	If $\dist(F_1,F_2)=\dist(x_1,y_1)\geq2$,  let $x_1,v_1,v_2,\dots ,y_1$ be the shortest path, where $v_2$ can be equal to $y_1$. Then $v_1$ will have one more neighbor, say $w_1$, WLOG in the clockwise direction. This saturates $v_1$ and thus $v_2$ and $x_n$ are adjacent. But now we have a shorter path $x_n,v_2,\dots,y_1$, a contradiction.
\end{proof}

\begin{obs}\label{obs:(4|3)_non-touching}
	The only $2$-nearly Platonic graph of type $(4|3)$ with non-touching exceptional faces is an antiprism.
\end{obs}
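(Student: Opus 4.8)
The plan is to follow the two-case structure of the proof of Observation~\ref{obs:(3|4)_non-touching}, according to whether $\dist(F_1,F_2)=1$ or $\dist(F_1,F_2)\geq 2$. Throughout I would use that $G$ is $2$-connected: a cut-vertex $v$ leaves a block $B$ containing $v$ as its only cut-vertex of $G$, with all vertices of $B$ other than $v$ of degree $4$; since $\delta(G)\geq 3$, $B$ is $2$-connected, and all its faces are triangles except possibly one of $F_1,F_2$, so $B$ is either an endblock (impossible by Theorem~\ref{thm:no-endblocks}) or an all-triangular block, which Euler's formula excludes. Hence each exceptional face is bounded by a cycle, and every vertex lying on neither exceptional face is surrounded by four triangular faces.

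The case $\dist(F_1,F_2)\geq 2$ is a shortcut argument. Take a shortest path $x_1,v_1,v_2,\dots,y_1$; its internal vertices lie on neither exceptional face. The saturated vertex $x_1$ has $F_1$-neighbors $x_2,x_n$, the neighbor $v_1$, and a fourth neighbor $w$; simplicity and the distance hypothesis force $w$ off both faces, and if $w=v_2$ the triangular face $x_1v_2x_n$ gives $x_n\sim v_2$, a forbidden shortcut. The three faces at $x_1$ other than $F_1$ being triangles pins the rotation at $x_1$ (up to reversing the labelling of $F_1$) and yields $v_1\sim x_2$, $v_1\sim w$, $w\sim x_n$; thus the free saturated vertex $v_1$ has exactly the four neighbors $x_2,x_1,w,v_2$ in this rotational order, and its two remaining faces are the triangles $v_1wv_2$ and $v_1v_2x_2$. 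The last one gives $x_2\sim v_2$, so $x_2,v_2,\dots,y_1$ is a walk of length $\dist(F_1,F_2)-1$ joining the two faces, a contradiction.

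The case $\dist(F_1,F_2)=1$ is the substantial one. Pick an edge $x_1y$ with $x_1\in F_1$ and $y\in F_2$. Examining the rotation at $x_1$ ($F_1$ in one wedge, triangles in the other three) together with $\deg(y)=4$ forces, after reversing the labelling of $F_1$ if necessary and orienting $F_2$ suitably, that $x_1$ is adjacent to two consecutive vertices $y_1,y_2$ of $F_2$, that $x_2\sim y_1$ and $x_n\sim y_2$, and that the three triangular faces at $x_1$ are $x_1x_2y_1$, $x_1y_1y_2$, $x_1y_2x_n$. The triangle on edge $x_1x_2$ is thereby fixed, so the identical local analysis at $x_2$ shows $x_2$ is adjacent to the consecutive pair $y_m,y_1$ and $x_3\sim y_m$; inductively every $x_i$ is adjacent to exactly two consecutive vertices of $F_2$, and the ``middle triangle'' $x_iy_?y_?$ sits on one edge of $F_2$, these edges advancing by one step per index in a fixed direction around $F_2$. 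If the index walk made two or more loops around $F_2$, some edge of $F_2$ would carry two distinct middle triangles and hence lie on three faces, contradicting Theorem~\ref{thm:2-conn-edges-on-bound}; so $n=m$ and, relabelling, $x_i\sim y_i,y_{i+1}$. Every $x_i$ and every $y_j$ now already has degree $4$, and since $G$ is connected it has no further vertices or edges, so $G$ is the $n$-antiprism.

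The routine step is the case $\dist(F_1,F_2)\geq 2$, which parallels Observation~\ref{obs:(3|4)_non-touching} almost verbatim. The main obstacle is the case $\dist(F_1,F_2)=1$: one must handle the rotational bookkeeping at the vertices of $F_1$ carefully and, above all, argue that the band of triangles wedged between $F_1$ and $F_2$ wraps around exactly once, so that the two boundary cycles have equal length and, together with the rungs, already exhaust $G$.
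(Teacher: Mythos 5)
Your proposal is correct and follows essentially the same route as the paper: a shortest-path shortcut contradiction rules out $\dist(F_1,F_2)\geq 2$, and for $\dist(F_1,F_2)=1$ a rotational analysis at $x_1$ forces the band of triangles that propagates around both faces to yield the antiprism. You supply somewhat more detail than the paper does (the explicit $2$-connectivity preamble and the argument that the triangle band wraps around exactly once, so $n=m$), but these are refinements of the same argument rather than a different approach.
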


\begin{proof}
		If $\dist(F_1,F_2)=1$, the graph will be an anti-prism. Let $x_1y_1$ be a shortest path. Vertex $x_1$ has neighbors $x_2,x_n,y_1$ and some $v_1$, which can be placed WLOG so that the edge $x_1v_1$ is placed between edges $x_1x_n$ and $x_1y_1$.  Then $x_1$ is saturated, and we must have edge $x_2y_1$. Now $y_1$ is saturated, which forces edge $x_2y_2$. After repeating the argument $n$ times, we obtain an anti-prism.

		Now suppose that $\dist(F_1,F_2)=\dist(x_1,y_1)\geq2$,  and  $x_1,v_1,v_2,\dots ,y_1$ is the shortest path, where $v_2$ can again be equal to $y_1$.
		
		Let $w_1$ be the fourth neighbor of $x_1$ and WLOG suppose it is located  counter-clockwise from $x_1$. Now $x_1$ is saturated and $v_1$ and $x_2$ are adjacent. For the same reason, saturation of $x_1$, we must have the edge $w_1v_1$. Notice that $v_1$ is now saturated, which forces also the edge $x_2v_2$. This would mean that $\dist(x_2y_1)<\dist(x_,y_1)=\dist(F_1,F_2)$, which is a contradiction, and the proof is complete.
\end{proof}

\subsection{Graphs of type $(3|5)$}\label{subs:type-(3|5)}

For the $(3|5)$ case, we need several lemmas to determine the distance between the two non-touching exceptional faces.

\begin{lemma}\label{lem:ladder}
	Suppose $G$ is a $2$-nearly Platonic graph of type $(3|5)$ with non-touching exceptional faces $F_1, F_2$.
	
	Let $l=\dist(F_1,F_2)$ and suppose $l\geq 3$. Let $x_1,v_1,v_2,\dots,y_1$ be a path of length $l$. Denote by $w_i$ the third neighbor of $v_i$ for $i=1,2,\dots,l-1$, and assume $w_1$ is located clockwise from $v_1$. Then all vertices $w_{2j+1}$ are located clockwise from the path $x_1,v_1,v_2,\dots,y_1$, while all vertices $w_{2j}$ are located counter-clockwise from $x_1,v_1,v_2,\dots,y_1$.
\end{lemma}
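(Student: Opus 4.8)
The plan is to prove the alternation by induction on $i$. Write $v_0:=x_1$ and $v_l:=y_1$. Since $P$ realises $\dist(F_1,F_2)=l$, the path $P$ is induced, and for every internal vertex $v_i$ (with $1\le i\le l-1$) the three neighbours of $v_i$ are $v_{i-1}$, $v_{i+1}$, and a further vertex $w_i$ that does not lie on $P$ --- an edge from $v_i$ to a vertex of $P$ at $P$-distance at least $2$ would give an $x_1$--$y_1$ path shorter than $l$. For $1\le i\le l-1$ let $Q_i$ be the face incident with both of the consecutive path edges $v_{i-1}v_i$ and $v_iv_{i+1}$; it occupies the angle at $v_i$ not containing the edge $v_iw_i$, so it lies on the side of $P$ opposite $w_i$. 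As $G$ is $2$-connected, $\partial Q_i$ is a cycle running through $v_{i-1},v_i,v_{i+1}$ consecutively; and $Q_i$ is a pentagon, since $Q_i=F_1$ would force $v_i\in F_1$ and hence $\dist(F_1,F_2)\le\dist(v_i,y_1)\le l-i<l$, while $Q_i=F_2$ would force $v_i\in F_2$ and hence $\dist(F_1,F_2)\le\dist(x_1,v_i)\le i<l$. Finally, fix once and for all a labelling of the two sides of the simple path $P$ (its thin neighbourhood is a disc), so that ``clockwise side of $P$'' is globally meaningful.

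The inductive engine is as follows. Suppose $w_i$ lies on side $\sigma$ of $P$, so $Q_i$ lies on the opposite side $\overline\sigma$. Traverse the cycle $\partial Q_i$ past $v_{i-1},v_i,v_{i+1}$; the next vertex is a neighbour of $v_{i+1}$ distinct from $v_i$, hence is $v_{i+2}$ or $w_{i+1}$. Assume for the moment that it is $w_{i+1}$ (this is the crux, handled below). Then $Q_i$ is incident with the edge $v_{i+1}w_{i+1}$, so of the two faces incident with the path edge $v_iv_{i+1}$, the face $Q_i$ is the one occupying, at $v_{i+1}$, the angle between $v_iv_{i+1}$ and $v_{i+1}w_{i+1}$ --- that is, the side containing $w_{i+1}$. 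Since $Q_i$ lies on side $\overline\sigma$ of $P$ and, near $v_{i+1}$, on the side containing $w_{i+1}$, these sides coincide: $w_{i+1}$ lies on side $\overline\sigma$, opposite $w_i$. Starting from the hypothesis that $w_1$ is on the clockwise side and iterating, $w_i$ is on the clockwise side for odd $i$ and on the counter-clockwise side for even $i$, which is the assertion.

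The step I expect to be the main obstacle is the crux invoked above: for $1\le i\le l-2$, the vertex following $v_{i+1}$ on $\partial Q_i$ cannot be $v_{i+2}$. Suppose it were; then $\partial Q_i$ is a $5$-cycle $s,v_{i-1},v_i,v_{i+1},v_{i+2}$ in which $s$ is adjacent to both $v_{i-1}$ and $v_{i+2}$. First $s\notin V(P)$: it differs from $v_{i-1},v_i,v_{i+1},v_{i+2}$, and if it were any other vertex $v_j$ of $P$ then, being joined to both $v_{i-1}$ and $v_{i+2}$, one of these adjacencies would yield an $x_1$--$y_1$ path shorter than $l$. Consequently $x_1,v_1,\dots,v_{i-1},s,v_{i+2},\dots,v_{l-1},y_1$ is a genuine $x_1$--$y_1$ path, of length $(i-1)+2+(l-i-2)=l-1$, contradicting $\dist(F_1,F_2)=l$. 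This excludes $v_{i+2}$, validates the assumption used in the inductive engine, and completes the proof.

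Apart from the crux, the argument is bookkeeping about the three angles at the degree-$3$ vertices $v_i$; the only points needing slight care are that the flanking faces $Q_i$ are common pentagons rather than exceptional faces (checked above via the distance bounds) and that ``side of $P$'' is globally consistent, which holds because $P$ is a simple path.
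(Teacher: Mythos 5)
Your proof is correct and follows essentially the same route as the paper's: both arguments rest on the observation that the face flanking three consecutive path vertices on the side away from the pendant neighbours must be a common pentagon, whose forced fifth boundary edge would otherwise create an $x_1$--$y_1$ path of length $l-1$. The only difference is organizational --- you run a direct induction on $i$ and rule out $v_{i+2}$ as the next boundary vertex of $Q_i$, whereas the paper takes a minimal index with two consecutive $w$'s on the same side and closes the weakly saturated path $w_{i-1},v_{i-1},v_i,v_{i+1},v_{i+2}$ into a pentagon --- and you make explicit some details (that the flanking faces are not exceptional, that the $w_i$ avoid $P$) that the paper leaves implicit.
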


\begin{proof}
	First observe that $w_2$ must be placed counter-clockwise from $v_2$. For if not, then the path $x_n,x_1,v_1,v_2,v_3$ is weakly saturated and forces edge $x_nv_3$. Then $\dist(x_ny_1)<\dist(x_,y_1)=\dist(F_1,F_2)$, which is a contradiction.

	Now let $i$ be the smallest subscript such that $w_i$ and $w_{i+1}$ are both placed in the same direction, say counter-clockwise from $v_i$ and $v_{i+1}$, respectively. Then the path $w_{i-1},v_{i-1},v_i,v_{i+1},v_{i+2}$ is weakly saturated, which forces edge $w_{i-1}v_{i+1}$. However,this creates a path $x_1,v_1,\dots,v_{i-1},w_{i-1},v_{i+2},v_{i+3},y_1$ of length $l-1<\dist(F_1,F_2)$, which is impossible. This contradiction completes the proof.
\end{proof}

\begin{lemma}\label{lem:1-or-3}
	Suppose $G$ is a $2$-nearly Platonic graph of type $(3|5)$ with non-touching exceptional faces $F_1, F_2$ and $\dist(F_1,F_2)=l$. Then $l=1$ or $l=3$.
\end{lemma}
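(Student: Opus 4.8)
The plan is to show that the only possible distances are $l=1$ and $l=3$; equivalently, to rule out $l=2$ and every $l\geq 4$. First I would dispose of $l=2$ directly. If $\dist(F_1,F_2)=\dist(x_1,y_1)=2$, then there is a path $x_1,v_1,y_1$ with $v_1\notin F_1\cup F_2$ (since the faces are non-touching). The vertex $v_1$ has a third neighbor $w_1$, which I may place, say, clockwise from the path. Then one of the two length-$4$ paths $x_2,x_1,v_1,y_1,y_2$ or $x_n,x_1,v_1,y_1,y_m$ (whichever lies on the counter-clockwise side of $v_1$) is weakly saturated — all its internal vertices $x_1,v_1,y_1$ have degree $3$ — so the common face of size $5$ on that side forces a fifth vertex closing the pentagon, and in particular forces an edge from an $x_i$ to a $y_j$. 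That edge has length $1$ in $G$ between $F_1$ and $F_2$, contradicting $\dist(F_1,F_2)=2$. (I would double-check that $x_1$ is genuinely saturated here: its neighbours are $x_2$, $x_n$, and $v_1$, so yes.)

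For $l\geq 4$ I would invoke Lemma \ref{lem:ladder}. With $l\geq 3$ we have a geodesic $x_1,v_1,v_2,\dots,v_{l-1},y_1$ and third neighbours $w_1,\dots,w_{l-1}$ that, by the ladder lemma, strictly alternate sides of the path: $w_1,w_3,w_5,\dots$ on the clockwise side and $w_2,w_4,w_6,\dots$ on the counter-clockwise side. The key point is that each consecutive pair $v_i,v_{i+1}$ lies on a common pentagonal face on the side where neither $w_i$ nor $w_{i+1}$ points — wait, rather: since $w_i$ and $w_{i+1}$ point to opposite sides, on exactly one side of the edge $v_iv_{i+1}$ the face is bounded locally by the weakly saturated path $w_{i-1},v_{i-1},v_i,v_{i+1},v_{i+2}$ (indices read along the geodesic, with $v_0:=x_1$, $v_l:=y_1$). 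That path already has length $4$, so the pentagon on that side is closed by a single additional edge $w_{i-1}v_{i+2}$. But that edge shortcuts the geodesic, producing a path $x_1,v_1,\dots,v_{i-1},w_{i-1},v_{i+2},\dots,y_1$ of length $l-1$, contradicting minimality — unless this shortcut is somehow not available, which happens only at the very ends. So I would run this argument for any interior index $i$ with $2\leq i\leq l-2$; the existence of such an $i$ requires $l\geq 4$, and that is exactly the case we want to exclude. When $l=3$ there is no interior pair to exploit, which is why $l=3$ survives.

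The main obstacle I anticipate is the bookkeeping at the ends of the geodesic and making the "weakly saturated path of length $4$" argument airtight: I must be sure that the relevant four-vertex subpath really does bound a face locally (using $2$-connectivity and Theorem \ref{thm:2-conn-edges-on-bound}, so that boundaries are cycles), that its internal vertices are all degree $3$ so the pentagon is forced with exactly one new vertex, and that the new vertex/edge is genuinely new (not already one of the $v_j$ or $w_j$, which could collapse the "shortcut" into something longer or create a self-touching face instead). Handling the possibility that the forced closing vertex coincides with an existing vertex of the configuration — which would then force a self-touching exceptional face and invoke Lemma \ref{lem:no-self-touch} or Theorem \ref{thm:no-endblocks} instead of giving a shorter geodesic — is the delicate part, and I would treat it as a separate short case in each branch. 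Also I should confirm at the outset that $l$ is finite and the faces are non-touching throughout, so that no $x_i$ equals any $y_j$, which is what makes "an edge $x_iy_j$" contradict $\dist=2$ cleanly.
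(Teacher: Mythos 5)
Your treatment of $l=2$ is correct and coincides with the paper's: the weakly saturated path $x_n,x_1,v_1,y_1,y_m$ lies on one pentagonal face, forcing an edge between a vertex of $F_1$ and a vertex of $F_2$ and contradicting $\dist(F_1,F_2)=2$.

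The $l\geq 4$ part, however, contains a genuine error. You claim that one of the two faces incident with an interior edge $v_iv_{i+1}$ of the geodesic is bounded by the path $w_{i-1},v_{i-1},v_i,v_{i+1},v_{i+2}$, so that the edge $w_{i-1}v_{i+2}$ is forced and shortcuts the geodesic. But that path contains three consecutive geodesic edges $v_{i-1}v_i$, $v_iv_{i+1}$, $v_{i+1}v_{i+2}$, and no face can contain three such edges: the corner $v_{j-1}v_jv_{j+1}$ is a face corner only on the side opposite to $w_j$, and by Lemma~\ref{lem:ladder} the $w_j$ strictly alternate sides, so any single face contains at most two consecutive edges of the geodesic. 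Concretely, if $w_{i-1}$ and $w_{i+1}$ lie on one side and $w_i$ on the other, the face on the first side of $v_iv_{i+1}$ is bounded by $w_{i-1},v_{i-1},v_i,v_{i+1},w_{i+1}$, forcing the ``rung'' $w_{i-1}w_{i+1}$, and the face on the other side is bounded by $w_i,v_i,v_{i+1},v_{i+2},w_{i+2}$, forcing $w_iw_{i+2}$; neither forced edge shortens the geodesic (a detour through a rung has length $l+1$, not $l-1$). The shortcut only appears when a weakly saturated path is anchored on the boundary of $F_1$: since $x_1$ has no third neighbour off the configuration, the face with corner $x_nx_1v_1$ is bounded by $x_n,x_1,v_1,v_2,w_2$, forcing $x_nw_2$, and combining this with the rung $w_2w_4$ forced by $w_2,v_2,v_3,v_4,w_4$ yields the path $x_n,w_2,w_4,v_4,\dots,y_1$ of length at most $l-1$ — the contradiction you were after. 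This end-anchored chaining is the paper's argument, and it is what your interior-index step needs to be replaced with.
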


\begin{proof}
  Let the shortest path be as in the previous proof, and third neighbors $w_i$ of $v_i$ be placed clockwise for odd subscripts, and counter-clockwise for even subscripts. We first want to show that the shortest path cannot have length more than three. 
  
  Suppose it does. Then $w_2$ is placed counter-clockwise, and $x_n,x_1,v_1,v_2,w_2$ is a weakly saturated path, forcing edge $x_nw_2$. Similarly, $w_4$ (which can be equal to $y_m$) is placed counter-clockwise, and $w_2,v_2,v_3,v_4,w_4$ is a weakly saturated path, forcing edge $w_2w_4$. This creates a  path  $x_n,w_2,w_4,v_4,v_5\dots,y_1$ of length at most $l-1$, a contradiction. 
  
  Now suppose $l=2$, and denote the shortest path between $F_1$ and $F_2$ by $x_1,v_1,y_1$. Again suppose the third neighbor $w_1$ of $v_1$ is placed clockwise from $v_1$. Then since the path $x_n,x_1,v_1,y_1,y_m$ is weakly saturated, we muse have $x_ny_m$ as an edge. Then $\dist(F_1,F_2)=\dist(x_n,y_m)=1$, which is impossible.
\end{proof}

So we just proved that the distance can only be one or three. In fact, the structures of the graphs for both cases are determined for both cases, which will be shown in the next lemma. Specifically, for the distance one case, we will shown that by some operations, every such graph could become a 2-nearly Platonic graphs with touching faces while the lengths of the exceptional faces do not change. And for the distance three case, we could reduce it to the smallest such graph and determine its structure.

\begin{lemma}\label{lem:(3|5)_l=1}
	There is exactly one infinite class of $2$-nearly Platonic graphs of type $(3|5)$ with non-touching exceptional faces $F_1, F_2$ and $\dist(F_1,F_2)=1$.
\end{lemma}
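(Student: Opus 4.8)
The plan is to show that every $2$-nearly Platonic graph of type $(3|5)$ with non-touching exceptional faces at distance one is built from a single chain-like pattern, and conversely that this pattern always yields such a graph, so there is exactly one infinite class. First I would set up coordinates: let $x_1y_1$ be an edge realizing $\dist(F_1,F_2)=1$, with $F_1$ bounded by $x_1,\dots,x_n$ and $F_2$ by $y_1,\dots,y_m$ in clockwise order. Since $x_1$ has degree three with neighbors $x_2,x_n,y_1$, it is already saturated. The common face containing the path $x_2,x_1,y_1$ has length five, so there is a weakly saturated path $x_2,x_1,y_1,\dots$ whose completion to a pentagon forces structure between the two boundaries. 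The key step is to run the same ``forcing'' argument used in Observations~\ref{obs:(3|4)_non-touching} and~\ref{obs:(4|3)_non-touching}: moving one vertex at a time around $F_1$, saturation of $x_i$ forces a pentagonal face straddling $F_1$ and $F_2$, which in turn fixes the adjacency pattern to the $y$-boundary and introduces exactly the number of ``rungs'' (edges between the two boundaries) dictated by $|F_1|$ and $|F_2|$. Because each pentagon between the boundaries consumes one edge of $F_1$, one edge of $F_2$, and a short path of new vertices, iterating all the way around forces $n=m$ and determines the graph completely up to the single free parameter $n$, giving the one infinite class.

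More concretely, the steps in order: (1) fix the edge $x_1y_1$ and observe $x_1$ is saturated; (2) analyze the pentagon on the side of the path $x_2,x_1,y_1$ — it must be $x_2,x_1,y_1,p,q$ for some new or boundary vertices $p,q$, and a parity/degree count forces exactly one new vertex, so the pentagon is $x_2,x_1,y_1,y_2,z$ with $z$ a fresh degree-$3$ vertex or forces $x_2$ adjacent to a vertex two steps along $F_2$; (3) note this saturates $y_1$ and the newly introduced vertex, propagating the forcing to the next edge $x_2x_3$; (4) iterate around $F_1$, showing the pattern is rigid and closes up only when $n=m$, with all the ``middle'' vertices of degree three forming the predicted configuration; (5) verify that the resulting graph is genuinely planar, $3$-regular, has all faces pentagons except $F_1,F_2$, and that $F_1,F_2$ are non-touching and at distance one — hence for each $n$ we get exactly one such graph, so the class is a single infinite family parametrized by $n$ (equivalently by $|F_1|=|F_2|$).

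I would also record the ``reduction'' remark foreshadowed in the paragraph preceding the lemma: by contracting one rung (or by a splitting operation dual to the one turning a touching graph into a block) one can pass from the distance-one graph to a $2$-nearly Platonic graph with \emph{touching} exceptional faces of the same sizes, which by Theorem~\ref{thm:touching} forces $|F_1|=|F_2|$ directly. This gives a clean alternative to the ad hoc counting in step (4): instead of tracking the closure condition by hand, invoke the already-proved classification of the touching case to conclude that the disparate faces have equal size, and then only the planarity/uniqueness bookkeeping of step (5) remains.

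The main obstacle I anticipate is step (2)–(4): making the ``one vertex at a time'' forcing genuinely airtight, in particular ruling out the possibility that the forced pentagon wraps around and hits $F_1$ or $F_2$ prematurely (creating a self-touching or touching face, or a shortcut path of length $<1$, which is absurd but must be formally excluded) and confirming that the newly created vertices are always fresh rather than collapsing onto already-placed ones. The distance bound $\dist(F_1,F_2)=1$ and the non-touching hypothesis together with Theorem~\ref{thm:no-endblocks} should kill all the degenerate branches, exactly as in the earlier observations, but the pentagon (length five, odd) makes the case analysis slightly bulkier than the $(3|4)$ and $(4|3)$ cases since a pentagon between the boundaries need not use two consecutive edges on each side symmetrically.
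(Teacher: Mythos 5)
Your main line of attack --- the vertex-by-vertex forcing argument modelled on Observations~\ref{obs:(3|4)_non-touching} and~\ref{obs:(4|3)_non-touching} --- has a genuine gap, and it is exactly the one you flag at the end. In the $(3|4)$ and $(4|3)$ cases a single rung $x_1y_1$ saturates enough vertices to force the next rung immediately, so every vertex of $F_1$ acquires a neighbour on $F_2$ and the rungs are in bijection with the boundary edges. For $(3|5)$ this picture is false: if every $x_i$ had a neighbour on $F_2$, the straddling pentagons would each use one edge of $F_1$ and two of $F_2$, giving $m=2n$, which is not what the (balanced) graphs of this class look like. In the actual structure the rungs are separated by larger fragments containing interior vertices, so your step (2) branching (fifth vertex of the pentagon fresh, on $F_1$, or on $F_2$) is not resolved by any parity or degree count at a single step, and your step (4) closure argument presupposes a rung-per-vertex pattern that does not occur. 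Carrying the forcing through would require tracking several layers of interior vertices and ruling out many branch collapses; nothing in your sketch does this.

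Your side remark --- pass to a graph with touching exceptional faces of the same sizes and invoke Theorem~\ref{thm:touching} --- is in fact the paper's entire proof, but the operation you propose does not work as stated: contracting a rung (or identifying $x_1$ with $y_1$) produces a vertex of degree four and destroys $3$-regularity. The paper's surgery is more delicate. First the edge $x_1y_1$ is split and five copies of the resulting strip are glued in a ring, so the exceptional faces acquire sizes $5n$ and $5m$; this divisibility is essential. Then a sequence of chord additions and edge deletions along one exceptional face (adding $w_1w_5, w_6w_{10},\dots$ and deleting $w_{5n}w_1, w_5w_6,\dots$) repeatedly cuts off a new pentagon and absorbs an adjacent one, rotating the boundary of that face without changing its length until it shares a vertex with the other exceptional face. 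Only then does Theorem~\ref{thm:touching} give $5m=5n$, and reversing the surgery recovers the structure. So the correct skeleton is present in your proposal, but as a throwaway remark with a broken reduction step; to make it a proof you would need to replace the contraction by the five-fold cover together with the pentagon-rotation surgery, and verify that each step preserves $3$-regularity, planarity, the face sizes, and $2$-connectivity (the last via Theorem~\ref{thm:no-endblocks}, as elsewhere in the paper).
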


\begin{proof}
	For $l=1$, assume there is the edge $x_1y_1$. We can now split the edge into two edges $x'_1y'_1$ and $x''_1y''_1$. We create five copies of this graph, and amalgamate the edge $x''_1y''_1$ in the $i$-th copy with $x'_1y'_1$ in the $(i+1)$-st copy, with $i$ taken modulo 5. It should be clear that the new graph is still 2-nearly Platonic with two exceptional faces, one of size $5n$, and the other of size $5m$.
	
	Then we relabel the vertices of the exceptional faces. Denote one of the edges $x'_{1}y'_{1}$ by $w_1z_1$, and let our two exceptional faces be $w_1w_2\dots w_{5n}$ and $z_1z_2\dots z_{5m}$. We add edges $w_1w_5$, $w_6w_{10}, \dots, w_{5n-4}w_1$ and remove edges $w_nw_1, w_5w_6, \dots,$ $w_{5n-5}w_{5n-4}$. This way we obtain a 2-nearly Platonic graph with touching faces since the two new faces share the vertex $z_1$. Note that this operation does not change the size of the exceptional faces. 
	
	As we proved before, the conjecture is true for the touching exceptional faces case, thus we can conclude that $5m = 5n$, or $m=n$. Since we have classified the structure of the 2-nearly Platonic graphs for the touching exceptional faces, we can determine the structure of all the non-touching case of type $(3|5)$ by reversing the operations. The fundamental block of this type is shown in Figure~\ref{fig:(3|5)-l=1}.
\end{proof}

\begin{figure}[H]
	\centering
	\includegraphics[scale=.5]{./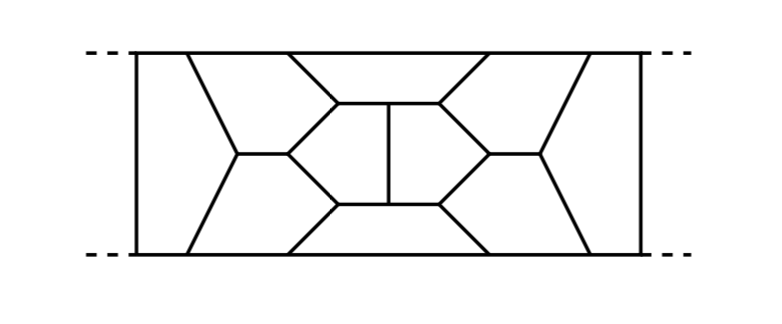}
	\caption{Fundamental block with non-touching faces of type $(3|5)$ with $l=1$}\label{fig:(3|5)-l=1}
\end{figure}

\begin{lemma}\label{lem:(3|5)_l=3}
	There is exactly one infinite class of $2$-nearly Platonic graphs of type $(3|5)$ with non-touching exceptional faces $F_1, F_2$ and $\dist(F_1,F_2)=3$.
\end{lemma}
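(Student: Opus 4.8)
The strategy mirrors the $l=1$ case but runs in the opposite direction: instead of converting a non-touching graph into a touching one, I would start from the analysis of the local structure forced near a shortest path of length $3$, reduce to a smallest graph, identify it explicitly, and then show every larger graph is obtained from it by a predictable insertion operation. First I would fix a shortest path $x_1,v_1,v_2,y_1$ realizing $\dist(F_1,F_2)=3$, and invoke Lemma~\ref{lem:ladder} together with the saturation arguments of Lemma~\ref{lem:1-or-3}: the third neighbors $w_1$ (clockwise) and $w_2$ (counter-clockwise) are forced, the paths $x_n,x_1,v_1,v_2,w_2$ and $w_1,v_1,v_2,y_1,y_m$ (or their analogues) are weakly saturated, and since the common faces have size $5$ this forces a ladder-like strip of pentagons running between $F_1$ and $F_2$. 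The key point is that saturating $v_1$ and $v_2$ and then $w_1,w_2$ propagates: each pentagon of the strip forces the next rung, exactly as edges were forced one-by-one in Observations~\ref{obs:(3|4)_non-touching} and~\ref{obs:(4|3)_non-touching}, so the region between the two exceptional faces is entirely tiled by a cyclic band of pentagons, each sharing one edge with $F_1$-side structure and one with $F_2$-side structure.

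Next I would pin down the ``width'' of that band. Because $k=3$, once the two boundary paths of a pentagon in the strip are weakly saturated the remaining vertices have their degrees completely determined, so the band has a fixed combinatorial cross-section; walking around it and using that $F_1$ has $n$ vertices and $F_2$ has $m$ vertices, the band closes up only if $n$ and $m$ are compatible with the number of pentagons used per unit of boundary. Counting pentagons two ways (along the $F_1$ boundary and along the $F_2$ boundary) and using Euler's formula on the annular region gives $m=n$, which is the balanced conclusion we want; simultaneously it shows the graph is a cyclic chain of a single fundamental block. I would then exhibit the smallest instance explicitly — the one where $F_1$ and $F_2$ are as small as the pentagonal constraints allow — and check by inspection (or by appealing to Theorem~\ref{thm:no-2-conn-1NP}, closing the band into a sphere) that it is the dodecahedron-derived graph, and that the general member is obtained by inserting copies of the fundamental block, so there is exactly one infinite class.

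\textbf{Main obstacle.} The delicate part is not the forcing near the shortest path — that is routine degree-counting of the kind already used repeatedly — but verifying that the strip of pentagons \emph{closes up consistently} and that no alternative global configuration (e.g.\ the band ``twisting'' or the two exceptional faces being linked by more than one strip) can occur. Concretely, one must rule out that after going once around $F_1$ the band returns shifted relative to $F_2$, which would either produce a third exceptional face or violate $3$-regularity; I would handle this by tracking the cyclic ordering of rungs and showing a shift forces an extra non-pentagonal face, contradicting the type $(3|5)$ hypothesis. Once the band is shown to close without twist, the equality $m=n$ and the description of the infinite class follow immediately, and the figure referenced as Figure~\ref{fig:(3|5)-l=3} displays the fundamental block.
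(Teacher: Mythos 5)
Your proposal is sound in outline but takes a genuinely different route from the paper. After the common starting point (the shortest path $x_1,v_1,v_2,y_1$, Lemma~\ref{lem:ladder}, and the forced edges $w_1y_2$ and $w_2x_n$), the paper does \emph{not} attempt to tile the whole annulus and count; instead it observes that the two forced paths $x_1,v_1,w_1,y_2$ and $x_n,w_2,v_2,y_1$ have identical local structure, amalgamates them into one (a ``reduction'' shrinking both exceptional faces by one), defines the inverse ``expansion,'' and then derives $m=n$ by reducing or expanding until one face has size exactly $5$: if the other still had size $>5$, the result would be a $2$-connected $1$-nearly Platonic graph, contradicting Theorem~\ref{thm:no-2-conn-1NP}. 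Your approach instead completes the forced propagation all the way around, establishes that the region between $F_1$ and $F_2$ is a closed band of pentagons alternating between those sharing an edge with $F_1$ and those sharing an edge with $F_2$, and reads off $m=n$ from the fact that each boundary edge of $F_i$ lies on exactly one band pentagon. Both are correct; yours is more self-contained and yields the explicit structure directly, while the paper's reduction trick outsources the global closure issue entirely to Theorem~\ref{thm:no-2-conn-1NP} and so never has to confront the ``twist'' problem you rightly flag as the delicate step. Do note that your $m=n$ count only becomes available \emph{after} the band is shown to close without shift, so that step cannot be deferred in your version the way it can in the paper's; if you carry it out, make explicit that the rung-to-rung forcing is deterministic (each rung determines the next uniquely), so after traversing the $n$ edges of $F_1$ the band must return to the initial rung, and the alternation of $F_1$-incident and $F_2$-incident pentagons then forces $m=n$ without any appeal to Euler's formula.
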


\begin{proof}
	Recall that the exceptional faces $F_1$ and $F_2$ are bounded by cycles $x_1,x_2,$ $\dots,x_n$ and $y_1,y_2,\dots,y_m$, respectively.
	We have $\dist(F_1,F_2)=3$ and denote a shortest path by $x_1,v_1,v_2,y_1$ and  the third neighbors of $v_1, v_2$ by $w_1, w_2$, respectively. WLOG suppose $w_1$ is placed clockwise from the path $x_1,v_1,v_2,y_1$, then by Lemma \ref{lem:ladder},  $w_2$ is placed counter-clockwise. Since $x_1, v_1, v_2, y_1$ are all saturated, thus $w_1y_2$ and $w_2x_n$ must be adjacent. Also, the third neighbor of $w_1$, call it $z_1$, must be placed clockwise from the path $x_1,v_1,w_1,y_2$ while the third neighbor $z_2$ of $w_2$ must be counter-clockwise from. $x_n,w_2,v_2,y_1$
	
	Now consider the path $x_1,v_1,w_1,y_2$ and $x_n,w_2,v_2,y_1$. They have the same structure in the sense that the third neighbors of both corresponding vertices $v_1$ and $w_2$ are placed counter-clockwise from the respective paths, while the third neighbors of the corresponding vertices $w_1$ and $v_2$ are placed clockwise from the respective paths. Therefore, we can amalgamate those two paths together (omitting the loop arising from edge $v_1v_2$) and reduce the size of the two exceptional faces by one. More precisely, we amalgamate $x_n$ with $x_1$, $w_2$ with $v_1$, $v_2$ with $w_1$, and $y_1$ with $y_2$.
  
	Since the reduced graph also maintains the property that the distance of two exceptional faces is three, we can repeat this procedure. We can also reverse it by taking a path $x_1,v_1,v_2,y_1$ and splitting it into two paths, $x'_1,v'_1,v'_2,y'_1$ and $x''_1,v''_1,v''_2,y''_1$ so that the original edge $w_2v_2$ becomes $w_2v'_2$ and $v_1w_1$ becomes $v''_1w_1$. Now we add edges $x'_1x''_1, v'_1v''_2$ and $y'_1y''_1$ to obtain a new graph in which the structure is preserved while the exceptional faces have sizes $n+1$ and $m+1$, respectively.

	Now we want to show that $m=n$. We assume WLOG that $n\leq m$. If $n=5+t$ for some $t\geq0$, we reduce the graph $t$ times to obtain $n'=5$ and $m'>5$, and we have a 1-nearly Platonic graph of type $(3|5)$ with the exceptional face of size $m'>5$, which is impossible by Theorem~\ref{thm:no-2-conn-1NP}. Hence, we must have $m=n$.
	
	If $n<5$, say $n=5-s$, we expand the graph $s$ times, and get $n'=5$ and $m'>5$. By the same argument as above, such graph cannot exist. Therefore, we must have $m=n$.
	
	The fundamental block of this type is shown in Figure~\ref{fig:(3|5)-l=3}.
\end{proof}

\begin{figure}[H]
	\centering
	\includegraphics[scale=.5]{./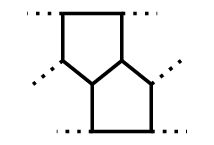}
	\caption{Fundamental block with non-touching faces of type $(3|5)$ with $l=3$}\label{fig:(3|5)-l=3}
\end{figure}

Hence in either case, we know the structure of the graph and the two exceptional faces have the same degree for each case.

\subsection{Graphs of type $(5|3)$}\label{subs:type-(5|3)}

For the $(5|3)$ case, we also first discuss the distance between the two exceptional faces.

We start by showing that the number of vertices on $F_1$ and $F_1$ is half of the order of the graph. Denote the order of the graph by $|V|$, and we know there are $m$ and $n$ vertices on $F_1$ and $F_2$, respectively. Since the graph is 5-regular, there are $5|V|/2$ edges. Also, we can count the number of edges using the number of faces. By Euler's formula, the number of faces, denoted by $|F|$, is $|E|-|V|+2$, which is $5|V|/2-|V|+2$, or $3|V|/2+2$. Since all faces except two are triangles, and the other two faces are of degree $m$ and $n$, respectively, we have $3(|F|-2)+m+n=2|E|$. Because $|F|=3|V|/2+2$ and $|E|=5|V|/2$, we have $9|V|/2+m+n=5|V|$, or $2(m+n)=|V|$, as desired.

We summarize these findings as follows.

\begin{obs}\label{obs:|V|}
	Let $G$ with vertex set $V$ be a $2$-nearly Platonic graph with non-touching exceptional faces of sizes $m$ and $n$, respectively. Then $|V|=2(m+n)$.
\end{obs}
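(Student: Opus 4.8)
The plan is a direct double-counting argument built on Euler's formula; the observation merely records the outcome of the computation carried out in the paragraph preceding the statement, so I would phrase it in that generality and then specialize. Fix the type $(k|d)$. First, $k$-regularity and the handshake identity $\sum_{v}\deg(v)=2|E|$ give $|E|=k|V|/2$. Plugging this into Euler's formula $|V|-|E|+|F|=2$ yields $|F|=|E|-|V|+2=(k-2)|V|/2+2$, so in particular $|F|-2=(k-2)|V|/2$.

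Next I would count incidences between edges and faces. Summing the lengths of all faces counts each edge exactly twice (once for each of the two faces on its boundary if the edge is not a cut-edge, and twice for the single incident face if it is a cut-edge), so $\sum_{f\in F}\deg(f)=2|E|$. By the definition of a $2$-nearly Platonic graph, $|F|-2$ of the faces have length $d$ and the remaining two have lengths $m$ and $n$; hence $d(|F|-2)+m+n=2|E|=k|V|$. Substituting $|F|-2=(k-2)|V|/2$ gives $d(k-2)|V|/2+m+n=k|V|$, that is, $m+n=\bigl(k-\tfrac{d(k-2)}{2}\bigr)|V|=\tfrac{2k-d(k-2)}{2}\,|V|$. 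For the type $(5|3)$ (and, by the identical computation, for $(3|5)$) the coefficient $2k-d(k-2)$ equals $1$, and we obtain $2(m+n)=|V|$, as claimed.

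There is essentially no obstacle: once Euler's formula and the face-length/edge identity are in place, the conclusion is purely arithmetic, and the non-touching hypothesis is not even needed for this particular identity. The only points deserving a word of care are (i) that the sum of face lengths equals $2|E|$ even when cut-edges are present — precisely the reason the ``contributes twice'' convention for cut-edges was recorded in Section~\ref{sec:intro} — and (ii) that the leading coefficient genuinely collapses to $1$, which is special to the two admissible types $(5|3)$ and $(3|5)$ still to be analyzed; for the other admissible types the same computation produces a different but equally explicit linear relation between $|V|$ and $m+n$, and those types were already handled in Section~\ref{subs:small-types}.
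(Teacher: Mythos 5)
Your proposal is correct and follows exactly the paper's argument: handshake lemma for $|E|=k|V|/2$, Euler's formula for $|F|$, and the face-length/edge-incidence identity $d(|F|-2)+m+n=2|E|$, which the paper carries out with $k=5$, $d=3$ plugged in from the start. Your parameterized version, noting that the coefficient $2k-d(k-2)$ collapses to $1$ exactly for the types $(5|3)$ and $(3|5)$, is a harmless and slightly more informative packaging of the same computation.
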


Now we use the fact that $|V|=2(m+n)$ to show the distance between $F_1$ and $F_2$ cannot be greater than two.

\begin{lemma}\label{lem:(5|3)_dist=1-or-2}
	Let $G$ be a $2$-nearly Platonic graph of type $(5|3)$ with non-touching exceptional faces $F_1$ and $F_2$ and  $\dist(F_1,F_2)=l$. Then $1\leq l\leq2$.
\end{lemma}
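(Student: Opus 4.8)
The plan is to argue by contradiction: suppose $\dist(F_1,F_2)=l\geq 3$ and derive a contradiction from the saturation constraints, using the key counting fact $|V|=2(m+n)$ from Observation~\ref{obs:|V|}. The overall strategy mirrors the $(3|5)$ case (Lemmas~\ref{lem:ladder} and~\ref{lem:1-or-3}), but now every vertex has degree five, so a shortest path $x_1=u_0,u_1,\dots,u_l=y_1$ of length $l$ has each internal vertex $u_i$ (for $1\le i\le l-1$) carrying three extra neighbors besides $u_{i-1},u_{i+1}$. Since $u_i$ is saturated and all faces incident to it except possibly the two exceptional ones are triangles, the neighbors of $u_i$ in cyclic order force a fan of triangles around $u_i$; in particular consecutive neighbors of $u_i$ must be adjacent. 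First I would record this local structure: around each internal $u_i$ the five neighbors $u_{i-1},a_i,b_i,c_i,u_{i+1}$ (in the cyclic rotation, say with the side toward $F_1$ on the left) are joined consecutively by edges $u_{i-1}a_i,a_ib_i,b_ic_i,c_iu_{i+1}$.

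Next I would exploit these forced edges to produce a shorter path between $F_1$ and $F_2$, contradicting minimality of $l$. The idea is that the forced triangles at $u_1$ already give edges from $x_2=x_n$-side neighbors of $x_1$ to $u_1$'s lateral neighbors, and propagating along the fan at $u_2$ then links those lateral neighbors toward $u_3$; chaining the fans $u_1,u_2,\dots$ yields a walk from $F_1$ to $F_2$ of length strictly less than $l$ unless something degenerates. The degeneracies to rule out are: the lateral neighbors coinciding with path vertices (handled because the faces are non-touching and by Theorem~\ref{thm:2-conn-edges-on-bound} the exceptional boundaries are cycles, so such coincidences either shorten the path or create a self-touching face, impossible by Lemma~\ref{lem:no-self-touch}); and the path vertices $x_1$ or $y_1$ themselves having their full fan, which would again force an edge $x_2 u_2$ or similar, shortening the path. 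I expect a short case analysis on the location (clockwise vs.\ counter-clockwise) of the lateral neighbors, analogous to the ``ladder'' argument, to finish this: whichever side the extra neighbors sit, saturation of $u_1$ forces $x_n$ (or $x_2$) adjacent to $u_2$, giving $\dist(x_n,y_1)\le l-1$.

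The main obstacle, I expect, is bookkeeping the rotation system carefully enough that the forced triangles genuinely close up without accidentally re-using an edge of an exceptional face or collapsing two of the three lateral neighbors — i.e.\ making the ``fan of triangles around a degree-$5$ saturated vertex'' argument airtight when that vertex is adjacent to a boundary vertex of $F_1$ or $F_2$. A secondary subtlety is that, unlike the $(3|5)$ case where internal path vertices have exactly one extra neighbor, here each has three, so the ``shortcut'' could in principle route through several lateral vertices; I would handle this by always taking the lateral neighbor nearest the $F_1$ side, reducing to essentially the same one-step shortcut as before. Once $l\ge 3$ is excluded, the lemma follows since $l\ge 1$ always.
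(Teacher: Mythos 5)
Your overall strategy --- a shortest-path/shortcut argument in the spirit of Lemma~\ref{lem:ladder} and Observation~\ref{obs:(4|3)_non-touching} --- does not go through here, and the gap is at the central step. Let $x_1=u_0,u_1,\dots,u_l=y_1$ be a shortest path with $l\ge 3$. The internal vertex $u_1$ has degree five and all faces incident to it are triangles, so its five neighbours form a $5$-cycle (its link), containing both $x_1$ and $u_2$. If $x_1$ and $u_2$ are consecutive in this $5$-cycle you do get the edge $x_1u_2$ and a genuine shortcut; but if they sit at cyclic distance two, the only forced conclusion is that some lateral neighbour $w$ of $u_1$ is adjacent to both $x_1$ and $u_2$, which yields a path $x_1,w,u_2$ of length two --- the same length as $x_1,u_1,u_2$, not shorter. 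Your assertion that ``saturation of $u_1$ forces $x_n$ (or $x_2$) adjacent to $u_2$'' is therefore unjustified: $w$ is merely some neighbour of $x_1$ and need not lie on the boundary of $F_1$. This is exactly where the analogy with the $(4|3)$ case fails: there, a degree-four internal vertex already has all four neighbours named ($x_1,x_2,w_1,v_2$), its link is completely determined, and the edge $x_2v_2$ is forced; at degree five there is an unnamed lateral neighbour, the rotation is not pinned down, and chaining the fans along the path produces walks of length $l$, not $l-1$. A radius-two neighbourhood of a geodesic in the infinite $5$-regular triangulation is locally consistent with $l\ge3$, so no purely local analysis of this shape can close the argument.

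The paper's proof is global, and the counting fact you mention only in passing is the whole engine. Assuming $l\ge 2$, one walks around each boundary cycle to show $|N(F_1)|=2n$ and $|N(F_2)|=2m$ (each $x_i$ has three neighbours off $F_1$, with consecutive ones identified and joined by forced triangle edges, giving $n$ vertices with two neighbours on $F_1$ and $n$ with one). Observation~\ref{obs:|V|} gives $|V|=2(m+n)$, hence $2n=|N(F_1)|\le|N(F_1)\cup N(F_2)|\le |V|-n-m=n+m$, so $n\le m$, and by symmetry $m=n$; equality then shows every vertex not on the two boundaries is adjacent to both, whence $l\le2$. To repair your proposal you would need to actually deploy $|V|=2(m+n)$ in this way (or find another global mechanism); as written, the local shortcut step is a genuine gap.
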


\begin{proof}
  Suppose the distance is two or more, and define the neighborhoods of $F_1, F_2$ as 
  $$
  N(F_j) = \{u\;|\; { ux\in E(G) \ \text{for some} \ x\in F_j}  \ \text{and} \ u\not\in F_j\}.
  $$
  We observe that $u\not\in F_{j+1}$ for $u\in N(F_j)$ as the distance is more than one. We want to show that $|N(F_j)|=2|F_j|$.   
  
  Let the $n$-cycle $x_1,x_2,\dots,x_n$ be the boundary of $F_1$ and $u_i^0, u_i^1, u_i^2 \in N(F_1)$ be the three neighbors of $x_i$, placed in that order. Since the faces inside the boundary are all triangles, $u_i^2$ and $u_{i+1}^0$ must be the same vertex. Also, $u_i^0$ and $u_i^1$ are forced to be adjacent, as well as $u_i^1$ and $u_i^2$. So in $N(F_1)$, we would have $n$ distinct vertices that have exactly two neighbors in $F_1$ each, and $n$ distinct vertices with exactly one neighbor in $F_1$ each. Thus, together there are $2n$ vertices.
  
  By applying the same argument to $N(F_2)$, we have $|N(F_2)|=2m$. Because we have $|F_1|=n, |F_2|=m$ and by Observation~\ref{obs:|V|}   $|V|=2(m+n)$,  it follows that
  $$
  |N(F_1)\cup N(F_2)| \leq n+m.
  $$
  But we also have 
  $$
  |N(F_1)| =2n \ \text{and} \ |N(F_2)|=2m.
  $$
  Indeed, $N(F_j)\subseteq N(F_1)\cup N(F_2)$, and 
  $$
  2n=|N(F_1)|\leq|N(F_1)\cup N(F_2)| \leq n+m,
  $$
  which yields $n\leq m$. By symmetry, looking at $N(F_2)$, we obtain $m\leq n$, which implies $m=n$, and consequently
  $$
  |F_1| = |F_2| =n \ \text{and} \  |N(F_1)\cup N(F_2)| =2n.
  $$
  Therefore,
  $$
   |F_1| + |F_2|  + |N(F_1)\cup N(F_2)| =4n =|V|
  $$
  and there is no vertex between the boundaries of $F_1$ and $F_2$ which would not be adjacent to vertices in both $F_1$ and $F_2$. Thus the distance between $F_1$ and $F_2$ cannot exceed two.
\end{proof}

In the previous lemma, we not only proved the statement, but we observed that if the distance is two, the conjecture holds. Moreover we can determine the structure in this case. By the proof of the lemma, all vertices other than the boundary of $F_2$ are at distance one from both $F_1$ and $F_2$. Let the vertices having one neighbor on $F_1$ be $v_i$ for $i=1,2,\dots,n$ and $x_iv_i$ be the edges. Let the common neighbor of $x_i$ and $x_{i+1}$ be $w_i$. Then there is the inner cycle $C_{2n}=v_1,w_1,v_2,w_2,\dots,v_n,w_n$ closing the triangles.

By symmetry, all vertices except the boundary of $F_1$ are at distance one  from $F_2$. Thus they are all in the set $\{v_1,v_2,\dots,v_n\}\cup\{w_1,w_2,\dots,w_n\}$. Clearly, for $i=1,2,\dots,n$  each $w_i$ is already of degree four and must be a neighbor of exactly one vertex on $F_2$, say $y_j$. This forces $v_i$ to be the remaining neighbor of both $y_{i-1}$ and $y_{i}$.  This uniquely determines the structure of the graph. We summarize our findings in the following lemma.

\begin{lemma}\label{lem:(5|3)_dist=2}
	The class of $2$-nearly Platonic graphs of type $(5|3)$ with non-touching exceptional faces $F_1$ and $F_2$ and  $\dist(F_1,F_2)=2$ is unique.
\end{lemma}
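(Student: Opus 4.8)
The plan is to formalize the structural analysis carried out in the paragraphs immediately preceding the statement, and then to show that the adjacencies that analysis leaves open are in fact forced, so that the only remaining freedom is the common size $n=|F_1|=|F_2|$.

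First I would recall from the proof of Lemma~\ref{lem:(5|3)_dist=1-or-2} that $|F_1|=|F_2|=n$, that $|V|=4n$ by Observation~\ref{obs:|V|}, and that the $2n$ vertices not on $F_1\cup F_2$ all lie in $N(F_1)\cap N(F_2)$; call this middle layer $M$. Writing the boundary of $F_1$ as the cycle $x_1,\dots,x_n$, I would use Theorem~\ref{thm:2-conn-edges-on-bound} together with the fact that every internal face incident to $F_1$ is a triangle to show, exactly as in the text, that $M$ carries a $2n$-cycle $C=v_1,w_1,v_2,w_2,\dots,v_n,w_n$, where $w_i$ is the common neighbour of $x_i$ and $x_{i+1}$ and $v_i$ is the remaining neighbour of $x_i$, with $v_i$ adjacent on $C$ to $w_{i-1}$ and $w_i$. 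Counting incidences then gives $\deg_{M\cup F_1}(w_i)=4$ and $\deg_{M\cup F_1}(v_i)=3$, so $w_i$ has exactly one further neighbour and $v_i$ exactly two; since $\dist(F_1,F_2)=2$ these neighbours cannot lie on $F_1$, and since $|M|=2n$ equals the length of $C$ and no vertex of $M$ is non-adjacent to $F_2$, an edge count between $M$ and $F_2$ (each $y_j$ contributes $3$ such edges, for $3n$ in all, while the $w_i$ together contribute $n$) forces every remaining neighbour of a $w_i$ or $v_i$ to lie on $F_2$ and $C$ to be chordless. Hence each $w_i$ has a unique neighbour on $F_2$ and each $v_i$ exactly two.

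Next I would run the mirror triangulation argument around $F_2$: each $y_j$ has three neighbours in $M$, say $p,q,r$ in rotational order with $pq,qr\in E(G)$, where $q$ is the one with a single $F_2$-neighbour and $p,r$ are shared with $y_{j-1}$ and $y_{j+1}$. Thus $q$ is one of the $w_i$ and $p,r$ are among the $v_i$, so the map sending $w_i$ to its unique $F_2$-neighbour is a bijection onto $\{y_1,\dots,y_n\}$; after relabelling $F_2$ we may assume $w_i\sim y_i$. Applying the triangular-face condition to the edges $v_iw_i$ and $w_iv_{i+1}$ of $C$ then shows $y_i$ is a common neighbour of $v_i,w_i,v_{i+1}$; since each $v_i$ has exactly two $F_2$-neighbours this yields $v_i\sim y_{i-1},y_i$. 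Together with $w_i\sim x_i,x_{i+1}$ and $v_i\sim x_i$ from the $F_1$ side, every edge of $G$ is determined by $n$, so $G$ is unique for each admissible $n$, and the class is a single infinite family.

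I expect the main obstacle to be the bookkeeping in the second paragraph: rigorously ruling out that the fifth neighbour of some $w_i$, or one of the two extra neighbours of some $v_i$, is a chord of $C$ rather than a vertex of $F_2$. This is precisely where the exact identities $|V|=4n$, $|M|=2n$, and the statement that no vertex between the two boundaries fails to be adjacent to both faces (all established in the proof of Lemma~\ref{lem:(5|3)_dist=1-or-2}) must be combined into one tight counting argument; once that is in place, planarity and the triangulation constraints make everything else forced.
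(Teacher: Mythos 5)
Your proposal is correct and follows essentially the same route as the paper, whose proof of this lemma is just the informal paragraph preceding the statement: use the counts from Lemma~\ref{lem:(5|3)_dist=1-or-2} to get the middle layer of $2n$ vertices, triangulate around $F_1$ to obtain the cycle $v_1,w_1,\dots,v_n,w_n$, and then force the attachments $w_i\sim y_i$ and $v_i\sim y_{i-1},y_i$. Your version is in fact tighter than the paper's, since the $3n$ edge count between the middle layer and $F_2$ and the mirror triangulation make explicit the step the paper compresses into ``this forces $v_i$ to be the remaining neighbor of both $y_{i-1}$ and $y_i$.''
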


\begin{proof}
	The proof was given above, and the stucture can be seen in  {Figure~\ref{fig:(5|3)-l=2} below}.
\end{proof}

\begin{figure}[H]
	\centering
	\includegraphics[scale=.4]{./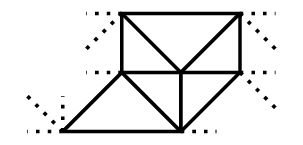}
	\caption{Fundamental block with non-touching faces of type $(5|3)$ with $l=2$}\label{fig:(5|3)-l=2}

\end{figure}
There is only one case left now, namely when $\dist(F_1,F_2)=1$. The method we will use  is similar to what we did in distance one case for type $(3|5)$. That is, we will use the results for the touching case  to prove the non-touching case.

\begin{lemma}\label{lem:(5|3)_dist=1}
	There are exactly two classes of $2$-nearly Platonic graphs of type $(5|3)$ with non-touching exceptional faces $F_1$ and $F_2$ and  $\dist(F_1,F_2)=1$.
\end{lemma}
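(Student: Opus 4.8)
The plan is to mimic the strategy used for type $(3|5)$ with $l=1$ in Lemma~\ref{lem:(3|5)_l=1}: reduce the non-touching distance-one case to the already-classified touching case by a local edge-splitting and amalgamation operation that preserves the sizes of the exceptional faces, then invoke Theorem~\ref{thm:touching}. The key difference is that the $5$-regular setting has two touching families (one built from $(5;4,4|3)$-blocks alternating with $K_2$, the octahedron-like ``edge cycle,'' and one built from $(5;3,2|3)$-blocks, the ``vertex cycle''), so I expect to recover \emph{two} classes here rather than one, which is why the statement says ``exactly two.''

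First I would set up the local picture at the shortest path. Since $\dist(F_1,F_2)=1$ there is an edge $x_1y_1$ with $x_1\in F_1$, $y_1\in F_2$, and the faces are non-touching so $x_1\neq y_1$ and they share no vertex. Because $G$ is $5$-regular and every vertex not on an exceptional face lies on at least two triangles, I would analyze the forced triangular faces incident to the edge $x_1y_1$ and to $x_1,y_1$ themselves; the two triangles on either side of $x_1y_1$ force a short ``ladder'' of triangles connecting a segment of $F_1$ to a segment of $F_2$, exactly as in Observation~\ref{obs:(4|3)_non-touching} and in the $l=2$ analysis above. This reveals that locally the graph looks like one of the fundamental blocks already identified for the touching case, with the edge $x_1y_1$ playing the role of the ``spine'' that in the touching case is collapsed to a shared vertex.

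Next I would perform the splitting/amalgamation surgery. Split the edge $x_1y_1$ into two parallel edges $x_1'y_1'$ and $x_1''y_1''$ (formally, subdivide and then split, so simplicity is maintained after the gluing), take several copies of the resulting graph, and glue the $x_1''y_1''$ edge of copy $i$ to the $x_1'y_1'$ edge of copy $i+1$ cyclically (the number of copies chosen so the arithmetic on face sizes works out, as with the factor $5$ in Lemma~\ref{lem:(3|5)_l=1}). Then, exactly as in that lemma, relabel the boundary of one exceptional face and perform the edge-swap that turns the cyclic chain into a chain whose two exceptional faces \emph{share a vertex}, i.e.\ a touching $2$-nearly Platonic graph of type $(5|3)$, without changing the sizes $m$ and $n$ of the exceptional faces. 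By Theorem~\ref{thm:touching} every such touching graph is balanced, so $m=n$; moreover, since Theorem~\ref{thm:touching} gives precisely the two chain structures (via $(5;4,4|3)$-blocks and via $(5;3,2|3)$-blocks), reversing the surgery yields precisely two infinite classes of non-touching distance-one graphs, with the two corresponding fundamental blocks.

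The main obstacle I anticipate is verifying that the surgery is well-defined and genuinely reversible in the $5$-regular case: I must check that after splitting $x_1y_1$ the local degrees and face structure are exactly right so that the cyclic amalgamation produces a legitimate simple $5$-regular planar graph with all-but-two faces of size $3$, and that the subsequent edge-swap produces a \emph{touching} (not self-touching) graph so that Theorem~\ref{thm:touching} applies. In particular I need the local ladder analysis near $x_1y_1$ to show there is genuinely a unique ``gluing interface'' of the right type, and I need to confirm that both touching families actually arise from a distance-one non-touching configuration (and conversely that no third configuration sneaks in). I would close by exhibiting the two fundamental blocks explicitly in figures and noting that in both the exceptional faces have equal size, completing the verification of the conjecture in this last remaining case.
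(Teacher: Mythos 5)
Your overall strategy---split the connecting edge $x_1y_1$, take several copies, glue them cyclically, then perform an edge-swap to produce a \emph{touching} $2$-nearly Platonic graph of the same exceptional face sizes and invoke the balancedness of the touching case---is exactly the paper's strategy (the paper uses three copies rather than five, but that is immaterial). However, your explanation of where the count ``exactly two'' comes from is wrong, and this is a genuine gap rather than a cosmetic difference. You claim the two classes arise because Theorem~\ref{thm:touching} provides two touching families of type $(5|3)$ (the $(5;4,4|3)$-block edge cycle and the $(5;3,2|3)$-block vertex cycle), and that reversing the surgery from each yields one non-touching class. That correspondence is not established and is not how the surgery behaves: in the paper's argument the edge-swap makes the two exceptional faces share a \emph{vertex} in every case, so the touching graph obtained is always of the vertex-cycle type; the two touching families do not index the two non-touching classes.

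The missing idea is a local case analysis at the edge $x_1y_1$: up to symmetry there are three possible placements of the two internal neighbors of $x_1$ and the two internal neighbors of $y_1$ relative to $x_1y_1$ (both pairs split to opposite sides of the edge; one pair on one side and the other split; both pairs on the same respective sides), with the fourth conceivable placement excluded because it would force a vertex of degree six. One then shows the ``mixed'' case forces the configuration of the first case (the weakly saturated path $x_n,x_1,y_1$ forces the edge $x_ny_1$ and pins down the remaining neighbors), leaving exactly two essentially distinct local configurations. Each of these admits its own version of the splitting/gluing/edge-swap surgery (in the third case one must modify edges on \emph{both} exceptional faces), each reduces to a touching graph sharing a vertex, and reversing the two surgeries yields the two classes. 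Without this enumeration your argument establishes at most that every distance-one graph is balanced, but not that there are exactly two classes, nor what they look like.
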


\begin{proof}
    Because $\dist(F_1,F_2)=1$, we must have an edge joining the two faces, say $x_1y_1$. Up to symmetry, there are three possible structures for the neighbors of $x_1$ and $y_1$. The first case is that the two neighbors of $x_1$ inside the boundary are located clockwise from $x_1y_1$ while two neighbors of $y_1$ are counter-clockwise from $x_1y_1$. The second case is that only  $x_1$ has its two internal neighbors on  the same side of $x_1y_1$, say clockwise for it, and $y_1$ has neighbors on both sides of $x_1y_1$. The last case is that the two neighbors of $x_1$ inside the boundary are on different sides of $x_1y_1$, and so are the two neighbors of $y_1$. The reason why the four neighbors cannot be on the same side of $x_1y_1$, say clockwise from $x_1y_1$, is that if so, then the path $x_n,x_1,y_1$ is weakly saturated and we would need the edge $x_ny_1$ to complete the triangular face. However, this would make $y_1$ of degree six, which is impossible. The three possible cases are shown in Figure~\ref{3-cases} below.

    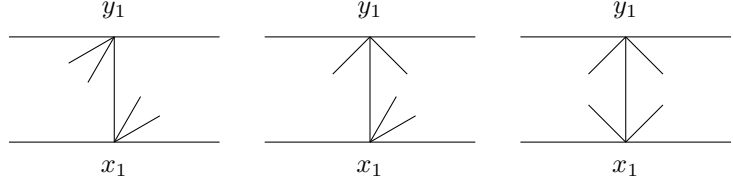
\begin{figure}[h]
        \begin{center}
            \begin{tikzpicture}[scale=.7]
                \node (a1) at (2,0) [label=below:$x_1$]{};
                \node (y1) at (2,2) [label=:$y_1$]{};
                \draw (0,0) -- (4,0);
                \draw (0,2) -- (4,2);
                \draw (2,0) -- (2,2);
                \draw (2,0) --++ (60:1);
                \draw (2,0) --++ (30:1);
                \draw (2,2) --++ (-120:1);
                \draw (2,2) --++ (-150:1);
            \end{tikzpicture}
            \hspace{1em}
            \begin{tikzpicture}[scale=.7]
                \node (a1) at (2,0) [label=below:$x_1$]{};
                \node (y1) at (2,2) [label=:$y_1$]{};
                \draw (0,0) -- (4,0);
                \draw (0,2) -- (4,2);
                \draw (2,0) -- (2,2);
                \draw (2,0) --++ (60:1);
                \draw (2,0) --++ (30:1);
                \draw (2,2) --++ (-45:1);
                \draw (2,2) --++ (-135:1);
            \end{tikzpicture}
            \hspace{1em}
            \begin{tikzpicture}[scale=.7]
                \node (a1) at (2,0) [label=below:$x_1$]{};
                \node (y1) at (2,2) [label=:$y_1$]{};
                \draw (0,0) -- (4,0);
                \draw (0,2) -- (4,2);
                \draw (2,0) -- (2,2);
                \draw (2,0) --++ (45:1);
                \draw (2,0) --++ (135:1);
                \draw (2,2) --++ (-45:1);
                \draw (2,2) --++ (-135:1);
            \end{tikzpicture}
            \caption{From left to right: case 1, 2, and 3}
            \label{3-cases}
        \end{center}
    \end{figure}


In fact, if we have the structure as described in the second case, we  obtain the same structure as in case one. We have the two neighbors of $x_1$ located clockwise from $x_1y_1$, which implies that the path $x_nx_1y_1$ is weakly saturated, and $x_ny_1$ must be an edge. Now, $x_n,x_1,y_1$ form a triangle and because both $x_1$ and $y_1$ are already saturated, $x_n$ cannot have a neighbor inside the triangle. Thus both remaining internal neighbors of $x_n$ are located counter-clockwise from $x_ny_1$, and  the two remaining neighbors of $y_1$ (one of which is $x_1$) are both on the clockwise side of $x_ny_1$, which is what we have in case one up to symmetry.

We reduced the problem to two cases, and will discuss them now one by one. For the first case where the two neighbors of $x_1$ inside the boundary are located clockwise from $x_1y_1$ while two neighbors of $y_1$ are counter-clockwise from $x_1y_1$, we can split the edge $x_1y_1$ to obtain a strip. Then we make three copies of the strip and attach them together, for the vertices that are incident with the splitting edge are symmetric. This way we obtain a larger 2-nearly Platonic graph with exceptional faces of degrees $3n$ and $3m$. We label the vertices again so that $x_1y_1$ is a path  from $F_1$ to $F_2$ and $x_1$ has two neighbors clockwise from $x_1y_1$. So $x_1y_2$ will be an edge connecting $F_1$ to $F_2$ as well. Then we remove edges $x_{3n}x_1, x_3x_4, \dots, x_{3n-3}x_{3n}$ and add edges $x_1x_3, x_4x_5, \dots, x_{3n-2}x_n$. The graph remains 5-regular and all but the two exceptional faces are triangles. Also, the long faces will have the same length as the graph before the operation. However, now the two exceptional faces share the vertex $y_1$, so by the previous result, the two faces must have the same size, i.e. $3m=3n$, thus $m=n$, as desired.

For the third case, where the two neighbors of $x_1$ inside the boundary are on different sides of $x_1y_1$, and the same holds for $y_1$, we can also split the edge $x_1y_1$, make three copies and glue them together. Then instead of adding and removing edges on only one of the exceptional faces as we did in the first case, we will add and remove edges on both inner and outer face. Again after the operation the two new exceptional faces share a vertex, which is one of the common neighbors of $x_1$ and $y_1$. Since the operation does not change the face size, we could conclude that $3m=3n$, and so $m=n$.

Since the class of 2-nearly Platonic graphs with touching faces obtained by these operations is unique as described in Lemma~\ref{lem:(5,3;3,2)-uniq}, it should be obvious that starting with graphs in Figure~\ref{3-cases} and reversing the steps, we obtain graphs in ~\ref{fig:(5|3)-l=1_ab} and~\ref{fig:(5|3)-l=1_c},  respectively.
\end{proof}

\begin{figure}[H]
    \centering
    \includegraphics[scale=.7]{./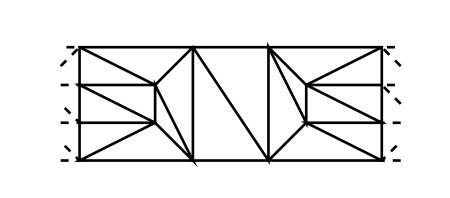}
    \caption{Case 1 and 2}
    \label{fig:(5|3)-l=1_ab}
\end{figure}

\begin{figure}[H]
    \centering
    \includegraphics[scale=.7]{./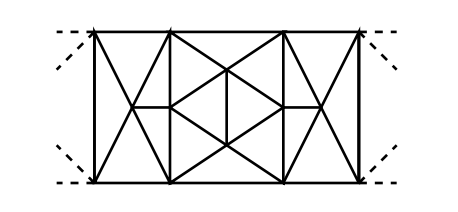}
    \caption{Case 3}
    \label{fig:(5|3)-l=1_c}
\end{figure}

\subsection{Classification: non-touching exceptional faces}
\label{subs:non-touching-main-result}

\begin{theorem}\label{thm:non-touching}
	There are exactly seven infinite families of $2$-nearly Platonic graphs with non-touching exceptional faces; the prism of type $(3|4)$ in Figure~\ref{fig:prism}, wo graphs of type $(3|5)$ in Figures~\ref{fig:barrel} and~\ref{fig:dodec}, antiprism of type $(4|3)$ in Figure~\ref{fig:antiprism}, tand three of type $(5|3)$ in Figures~\ref{fig:ico-wide},~\ref{fig:ico-1st} and~\ref{fig:ico-2nd}. 
	Moreover, all these graphs have the two exceptional faces of the same size.
\end{theorem}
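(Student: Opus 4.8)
\textbf{Proof plan for Theorem~\ref{thm:non-touching}.}

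The plan is to assemble the theorem directly from the lemmas and observations already established in Sections~\ref{subs:small-types} through~\ref{subs:type-(5|3)}, organizing the argument by the type $(k|d)$ of the graph. First I would recall that the only admissible types for $2$-nearly Platonic graphs are $(3|3),(3|4),(3|5),(4|3),(5|3)$, so these are the only cases to check in the non-touching regime. For type $(3|3)$, Observation~\ref{obs:no_(3|3)_non-touching} shows no such graph exists, so it contributes nothing. For type $(3|4)$, Observation~\ref{obs:(3|4)_non-touching} identifies the prism as the unique family, and in a prism the two exceptional faces (the top and bottom $n$-gons) obviously have equal size; this gives the one family shown in Figure~\ref{fig:prism}. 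For type $(4|3)$, Observation~\ref{obs:(4|3)_non-touching} identifies the antiprism as the unique family, again with two equal-size exceptional faces, giving the family in Figure~\ref{fig:antiprism}.

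Next I would handle type $(3|5)$. By Lemma~\ref{lem:1-or-3}, the distance between the two non-touching exceptional faces is either $1$ or $3$. Lemma~\ref{lem:(3|5)_l=1} gives exactly one infinite class for distance $1$ (Figure~\ref{fig:barrel}), and Lemma~\ref{lem:(3|5)_l=3} gives exactly one infinite class for distance $3$ (Figure~\ref{fig:dodec}); both lemmas also establish that in each class the two exceptional faces have the same size. So type $(3|5)$ contributes exactly two families, each balanced. Finally, for type $(5|3)$: by Lemma~\ref{lem:(5|3)_dist=1-or-2} the distance is $1$ or $2$. Lemma~\ref{lem:(5|3)_dist=2} gives exactly one class for distance $2$ (Figure~\ref{fig:ico-wide}), balanced; Lemma~\ref{lem:(5|3)_dist=1} gives exactly two classes for distance $1$ (Figures~\ref{fig:ico-1st} and~\ref{fig:ico-2nd}), both balanced (the case analysis there reduced the three a~priori configurations to two, each forcing $m=n$). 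Summing up: $1+0+1+2+1+2=$ wait---recount: type $(3|4)$ one, type $(4|3)$ one, type $(3|5)$ two, type $(5|3)$ three, total seven, and type $(3|3)$ contributes zero. This matches the claimed count of seven families.

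The main thing to be careful about is \emph{completeness and non-redundancy} of the enumeration: I must argue that every $2$-nearly Platonic graph with non-touching exceptional faces falls into exactly one of the cases just listed (no graph is counted twice, and no graph is missed), which follows because the type $(k|d)$ and the distance $\dist(F_1,F_2)$ are invariants that partition all such graphs, and the relevant lemmas are exhaustive in the distance parameter for each type. I would also remark that the ``moreover'' clause is immediate once the structure in each family is known, since in every family the two exceptional faces are related by an obvious symmetry of the fundamental block (top/bottom of a prism-like strip, or the two ends of a ladder that closes up), forcing $m=n$. The only genuinely delicate point---already dispatched in Lemma~\ref{lem:(5|3)_dist=1}---is ruling out the mixed configuration (case~2) in the $(5|3)$, distance~$1$ analysis; here I would simply cite that lemma rather than repeat the argument. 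With all pieces in hand the proof is a short bookkeeping argument citing the enumerated lemmas type by type.
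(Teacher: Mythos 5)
Your proposal is correct and follows essentially the same route as the paper's own proof, which likewise assembles the theorem type by type from Observations~\ref{obs:no_(3|3)_non-touching}, \ref{obs:(3|4)_non-touching}, \ref{obs:(4|3)_non-touching} and Lemmas~\ref{lem:(3|5)_l=1}, \ref{lem:(3|5)_l=3}, \ref{lem:(5|3)_dist=2}, \ref{lem:(5|3)_dist=1}; your additional explicit citation of the distance-restriction lemmas (\ref{lem:1-or-3} and \ref{lem:(5|3)_dist=1-or-2}) to justify exhaustiveness is a sound and slightly more careful presentation of the same bookkeeping.
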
 

\begin{proof}
	The non-existence of 2-nearly Platonic graphs follows from Observation~\ref{obs:no_(3|3)_non-touching}. 
	The result for type $(3|4)$  follows from Observation~\ref{obs:(3|4)_non-touching}.

	\begin{figure}[H]
		\centering
		\includegraphics[scale=.3]{./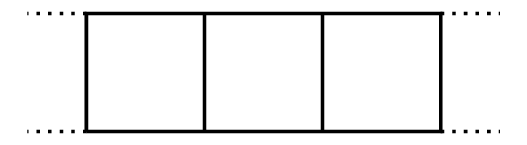}
		\caption{Prism, type $(3|4)$}\label{fig:prism}
	\end{figure}

	For type $(3|5)$ the result follows from Lemmas~\ref{lem:(3|5)_l=1} and~\ref{lem:(3|5)_l=3}.

	\begin{figure}[H]
		\centering
		\includegraphics[scale=.4]{./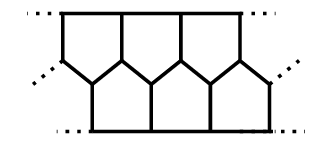}
		\caption{Barrel, type $(4|3)$}\label{fig:barrel}				
	\end{figure}

	\begin{figure}[H]
		\centering
		\includegraphics[scale=.25]{./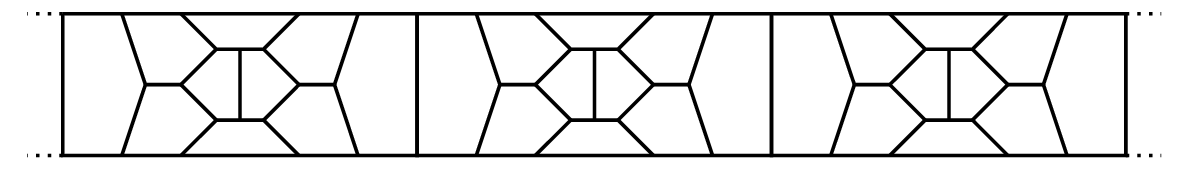} 
		\caption{Dodecahedron  thick cycle, type $(4|3)$}\label{fig:dodec}						
	\end{figure}
	
	The result for type $(4|3)$  follows from Observation~\ref{obs:(4|3)_non-touching}.

	\begin{figure}[H]
		\centering
		\includegraphics[scale=.3]{./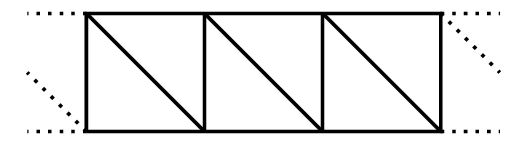}
		\caption{Antiprism, type $(4|3)$}\label{fig:antiprism}		
	\end{figure}

	Finally, for type $(5|3)$ the result follows from Lemmas~\ref{lem:(5|3)_dist=2}  and~\ref{lem:(5|3)_dist=1}.
\end{proof}	

	\begin{figure}[H]
		\centering
		\includegraphics[scale=.3]{./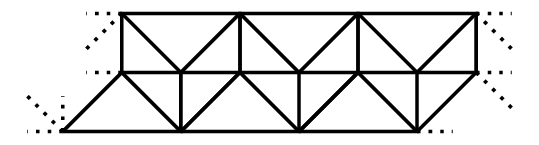}
		\caption{Icosahedron wide cycle, type $(5|3)$}\label{fig:ico-wide}						
	\end{figure}

	\begin{figure}[H]
		\centering
		\includegraphics[scale=.4]{./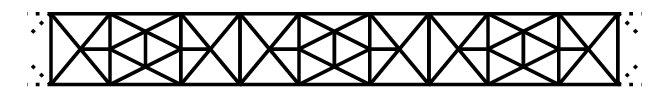}
		\caption{Icosahedron first thick cycle, type $(5|3)$}\label{fig:ico-1st}						
	\end{figure}

	\begin{figure}[H]
		\centering
		\includegraphics[scale=.45]{./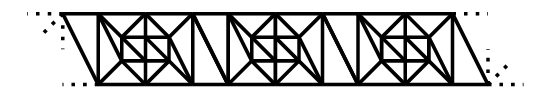}
		\caption{Icosahedron second thick cycle, type $(5|3)$}\label{fig:ico-2nd}						
	\end{figure}


\section{Conclusion}\label{sec:conclusion}

We summarize our results in the form answering in the affirmative the conjecture by Keith, Froncek, and Kreher~\cite{KFK1}. For the respective classes of graphs, we slightly modified the terminology introduced in~\cite{KFK1}.

\begin{theorem}\label{thm:main5}
	All $2$-nearly Platonic graphs are balanced and belong to one of the following $14$ families, listed by type $(k|d)$:
	
	\begin{itemize}
		\item Type $(3|3)$: 
			\begin{enumerate}
				\item 	tetrahedron edge cycle (Figure~\ref{fig:chain(3,3)})
			\end{enumerate}
	
		\item Type $(3|4)$: 
			\begin{enumerate}
	 			\item[2.] cube edge cycle (Figure~\ref{fig:chain(3,4)})
	 			\item[3.] prism (Figure~\ref{fig:prism})
			\end{enumerate}

		\item Type $(3|5)$: 
			\begin{enumerate}
				\item[4.] dodecahedron edge cycle (Figure~\ref{fig:chain(3,5)})
				\item[5.] barrel (Figure~\ref{fig:barrel})
				\item[6.] dodecahedron thick cycle		(Figure~\ref{fig:dodec})		
			\end{enumerate}
				
		\item Type $(4|3)$: 
			\begin{enumerate}
				\item[7.] octahedron edge cycle (Figure~\ref{fig:chain(4,3)a})
				\item[8.] octahedron vertex cycle (Figure~\ref{fig:chain(4,3)b})
				\item[9.] antiprism			(Figure~\ref{fig:antiprism})
			\end{enumerate}

		\item Type $(5|3)$: 		
			\begin{enumerate}
				\item[10.]  icosahedron edge cycle (Figure~\ref{fig:chain(5,3)a})
				\item[11.]  icosahedron vertex cycle (Figure~\ref{fig:chain(5,3)b}) 
				\item[12.]  icosahedron wide cycle (Figure~\ref{fig:ico-wide}) 
				\item[13.]  icosahedron first thick cycle (Figure~\ref{fig:ico-1st}) 
				\item[14.]  icosahedron second thick cycle (Figure~\ref{fig:ico-2nd}) 
			\end{enumerate}
		
	\end{itemize} 
\end{theorem}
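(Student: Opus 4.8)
The plan is to obtain Theorem~\ref{thm:main5} by first pinning down which types $(k|d)$ can occur at all, and then assembling the two classification theorems already established: Theorem~\ref{thm:touching} for the case when the two exceptional faces share a vertex, and Theorem~\ref{thm:non-touching} for the case when they are vertex-disjoint. These two cases are exhaustive, since two faces either meet in a vertex or they do not, and by Lemma~\ref{lem:no-self-touch} no exceptional face is self-touching, so in the first case the two faces genuinely touch each other and nothing else. Each of the two cited theorems both enumerates its families and asserts that in every one of them the two exceptional faces have equal size, so the only work left is the bookkeeping that glues the pieces together.

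First I would establish $k\in\{3,4,5\}$ and, for each such $k$, the admissible values of $d$. Let $G$ be a $2$-nearly Platonic graph of type $(k|d)$ with $|V|$ vertices, $|E|$ edges and $f=|F|$ faces, of which $f-2$ have length $d$ and the remaining two have lengths $m_1,m_2\neq d$. Since $G$ is non-trivial, connected and $k$-regular with $f>4$, it is neither $K_2$ nor a cycle, so $k\geq 3$; hence $\delta(G)\geq 3$, which (the graph being simple) forces every face to have length at least $3$, in particular $m_1,m_2\geq 3$. Regularity gives $2|E|=k|V|$, and Euler's formula $|V|-|E|+|F|=2$ then yields $f-2=|E|-|V|=|E|(k-2)/k$. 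Summing face lengths, $(f-2)d+m_1+m_2=2|E|$, whence
\[
m_1+m_2=2|E|-\frac{(k-2)d}{k}\,|E|=\frac{2k-(k-2)d}{k}\,|E|.
\]
As $m_1+m_2\geq 6>0$ and $|E|>0$, we need $2k-(k-2)d>0$, i.e. $d<2k/(k-2)$. Likewise $2|E|=(f-2)d+m_1+m_2\geq 3f$, which together with $f-2=|E|(k-2)/k$ gives $|E|(6-k)/k\geq 6>0$, so $k\leq 5$. Therefore $k\in\{3,4,5\}$, and $d<2k/(k-2)$ leaves precisely the pairs $(3|3),(3|4),(3|5),(4|3)$ and $(5|3)$.

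Next I would split on whether the two exceptional faces of $G$ touch. If they do, then by Lemma~\ref{lem:no-self-touch} they touch each other only, and Theorem~\ref{thm:touching} applies: $G$ belongs to one of exactly seven families, the tetrahedron edge cycle of type $(3|3)$, the cube edge cycle of type $(3|4)$, the dodecahedron edge cycle of type $(3|5)$, the octahedron edge cycle and octahedron vertex cycle of type $(4|3)$, and the icosahedron edge cycle and icosahedron vertex cycle of type $(5|3)$, in each of which $m_1=m_2$. If instead the two exceptional faces are vertex-disjoint, then Observation~\ref{obs:no_(3|3)_non-touching} rules out type $(3|3)$ and Theorem~\ref{thm:non-touching} handles the rest, giving exactly seven further families, the prism of type $(3|4)$, the barrel and dodecahedron thick cycle of type $(3|5)$, the antiprism of type $(4|3)$, and the icosahedron wide cycle, icosahedron first thick cycle and icosahedron second thick cycle of type $(5|3)$, again all balanced. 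The two lists are disjoint and together list the $14$ families in the statement, and since balancedness is part of each classification theorem, every $2$-nearly Platonic graph is balanced.

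Because the entire substance of the argument is carried by the two cited classification theorems, there is no genuinely new obstacle at this stage; the only things to verify are that the touching/non-touching dichotomy is exhaustive and mutually exclusive, that the admissibility estimate above is run correctly, and that the two lists of seven families reproduce the stated fourteen with no overlap and no omission. The real difficulty of the paper lies earlier, in the block analysis, most conspicuously in identifying the unique $(4;2,2|3)$-block of Lemma~\ref{lem:(4,3;2,2)-uniq}, and in determining the possible distances between non-touching exceptional faces in the $(3|5)$ and $(5|3)$ cases; by the time Theorem~\ref{thm:main5} is reached, all of that work is done and only the assembly remains.
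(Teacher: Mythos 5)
Your proposal is correct and follows essentially the same route the paper intends: Theorem~\ref{thm:main5} is a summary statement whose content is exactly the union of Theorem~\ref{thm:touching} and Theorem~\ref{thm:non-touching}, split by the touching/non-touching dichotomy, which Lemma~\ref{lem:no-self-touch} makes exhaustive and exclusive. Your explicit Euler-formula derivation of the admissible types $(3|3),(3|4),(3|5),(4|3),(5|3)$ is a welcome addition, since the paper asserts this restriction in the introduction and uses it implicitly but never writes out the computation.
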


\begin{rem*}
	\emph{This paper was originally written as two independent papers by two disjoint pairs of co-authors. The methods used in both papers were very similar, and the papers differed mainly  in the structure of theorems and proofs. After we had learned about each other, we decided to join forces and merge the papers into one, which is presented here.
	}
\end{rem*}







\end{document}